\newcommand{\assign}{:=}
\numberwithin{equation}{section} 
\newcommand{\nocomma}{}
\newcommand{\noplus}{}
\newcommand{\nosymbol}{}
\newcommand{\tmem}[1]{{\em #1\/}}
\newcommand{\tmop}[1]{\ensuremath{\operatorname{#1}}}
\newcommand{\tmstrong}[1]{\textbf{#1}}
\newenvironment{proof}{\noindent\textbf{Proof\ }}{\hspace*{\fill}$\Box$\medskip}
\newtheorem{corollary}{Corollary}
\newtheorem{definition}{Definition}
\newtheorem{lemma}{Lemma}
\newtheorem{proposition}{Proposition}
{\theorembodyfont{\rmfamily}\newtheorem{remark}{Remark}}
\newtheorem{theorem}{Theorem}
\newtheorem{Conjecture}{Conjecture}
\newtheorem{Statement}{Statement}
\newtheorem{Fact}{Fact}
\newcommand{\XXint}[3]{{\setbox}0=\text{\ensuremath{#1 #2 #3 \int}}
{\vcenter{\text{\ensuremath{#2 #3}}}}{\kern}-.5{\tmwd}0}
\newcommand{\opn}[2]{\newcommand{\1}{\}} {\opn}{\Rm{Rm}} {\opn}{\Ric{Ric}}
{\opn}{\Rc{Rc}} {\opn}{\Scal{Sc}} {\opn}{\Tr{Tr}} {\opn}{\Trac{Tr}}
{\opn}detdet {\opn}{\diam{diam}} {\opn}{\dist{dist}} {\opn}{\Im}Im
{\opn}{\div}div {\opn}{\Ker{Ker}} {\opn}expexp {\opn}{\Vol{Vol}}
{\opn}{\exph{exph}} {\opn}{\Herm{Herm}} {\opn}{\End{End}} {\opn}{\Hess{Hess}}
{\opn}{\Vol{Vol}}}
\newcommand{\R}{\mathbb{R}}
\newcommand{\contract}{{\kern}-1.5pt{\vrule} width6.0pt height0.4pt depth0pt
{\vrule} width0.4pt height4.0pt depth0pt}
\newcommand{\retract}{{\kern}-1.5pt{\vrule} width0.4pt height4.0pt depth0pt
{\vrule} width6.0pt height0.4pt depth0pt}
\newcommand{\Openbox}{{\leavevmode} {\hfil}{\vrule} width{\boxrulethickness}
{\vbox} to{\Openboxwidth{{\advance}{\Openboxwidth} -2{\boxrulethickness}
{\hrule} height {\boxrulethickness} width{\Openboxwidth}{\vfil} {\hrule}
height{\boxrulethickness}}}{\vrule} width{\boxrulethickness}{\hfil} }
\begin{document}

\title{On maximal totally real embeddings}\author{\\
{\tmstrong{Nefton Pali}}}\date{}\maketitle

\begin{abstract}
  We consider complex structures with totally real zero section of the
  tangent bundle. We assume that the complex structure tensor is real-analytic
  along the fibers of the tangent bundle. This assumption is quite natural in
  view of a well known result by Bruhat and Whitney \cite{Br-Wh}. We provide explicit
  integrability equations for such complex structures in terms of the
  fiberwise Taylor expansion. In a particular geometric case considered in the literature, we explicit much further the fiberwise Taylor expansion of the complex structure as well as the integrability equations. 
\end{abstract}
{\def\thefootnote{\relax}\footnote{\hskip-0.6cm{\bf{Key words}} : Totally real embeddings, Integrability equations. Linear and non-linear connections over vector bundles.
\\
{\bf{AMS Classification}} : 53C25, 53C55, 32J15.}} 

\tableofcontents

\section{Introduction and statement of the main result}

Let $\left( E, \pi_E, M \right)$ be a smooth vector bundle over a manifold
$M$. Let $E_p$ be the fiber of $E$ over a point $p \in M$ and let $\eta \in
E_p$. We consider the transition map $\tau_{\eta} \left( v \right) \assign
\eta + v$ acting over $E_p$ and we consider its differential 
$$
d_0 \tau_{\eta}
: T_{E_p, 0} \longrightarrow T_{E_p, \eta}\,,
$$ 
at the point $0$. Composing $d_0
\tau_{\eta}$ with the canonical isomorphism $E_p \simeq T_{E_p, 0}$ we obtain
an isomorphism map
\begin{equation}
  \label{transISO} T_{\eta} : E_p \longrightarrow T_{E_p, \eta}\, .
\end{equation}
We denote by $0_M$ the zero section of $E$. Differentiating the identity
$\tmop{id}_M = \pi_E \circ 0_M$ we obtain $\mathbbm{I}_{T_{M, p}} = d_{0_p}
\pi_E \circ d_p 0_M$. This implies the decomposition
\begin{eqnarray*}
  T_{E, 0_p}  =  d_p \,0_M \left( T_{M, p} \right) \oplus \tmop{Ker} d_{0_p}
  \pi_E \,.
\end{eqnarray*}
We notice also the obvious equalities $\tmop{Ker} d_{\eta} \pi_E = d_0
\tau_{\eta} \left( T_{E_p, 0} \right) = T_{\eta} \left( E_p \right) \simeq
E_p$, for any $\eta \in E_p$. Now applying this to $\eta = 0_p$, using the
previous decomposition and the canonical isomorphism $d_p \,0_M \left( T_{M, p}
\right) \simeq T_{M, p}$, we infer the existence of the canonical isomorphism
$T_{E, 0_p} \simeq T_{M, p} \oplus E_p$, that we rewrite as
\begin{equation}
  \label{FrstISO} T_{E \mid M} \simeq T_M \oplus E\, .
\end{equation}
\begin{definition}
  A real sub-manifold $M$ of an almost complex manifold $\left( X, J \right)$
  is called totally real if $T_{M, p} \cap J \left( T_{M, p} \right) = 0_p$
  for all $p \in M$. A totally real sub-manifold $M$ of an almost complex
  manifold $\left( X, J \right)$ is called maximally totally real if
  $\dim_{_{\mathbbm{R}}} M = \dim_{_{\mathbbm{C}}} X$.
\end{definition}

\subsection{$M$-totally real almost complex structures over $T_M$}\label{totR}

We consider $M$ included inside $T_M$ via the zero section. We know by the isomorphism
(\ref{FrstISO}) with $E = T_M$, that this embedding induces the canonical
isomorphism $T_{T_M \mid M} \simeq T_M \oplus T_M$. The vector bundle $T_{T_M
\mid M}$ is a complex one with the canonical complex structure $J^{\tmop{can}}
: \left( u, v \right) \longmapsto \left( - v, u \right)$ acting on the fibers.

Any almost complex structure which is a continuous extension of
$J^{\tmop{can}}$ in a neighborhood of $M$ inside $T_M$ makes $M$ a maximally
totally real sub-manifold of $T_M$. 

Over an arbitrary small neighborhood of $M$
inside $T_M$ the complex distribution $T_{T_M}^{0, 1}$ is horizontal with
respect to the natural projection $\pi : T_M \longrightarrow M$.

We remind that the data of a smooth complex horizontal distribution over $T_M$
coincides with the one of section 
$$
A \in C^{\infty} \left( T_M, \pi^{\ast}
\mathbbm{C}T^{\ast}_M \otimes_{_{\mathbbm{C}}} \mathbbm{C}T_{T_M} \right),
$$
such that $d \pi \cdot A =\mathbbm{I}_{\pi^{\ast} \mathbbm{C}T_M}$.

For any complex vector field $\xi \in C^{\infty} \left( M, \mathbbm{C}T_M \right)$ we will denote
by abuse of notation $A \left( \xi \right) \equiv A \cdot \left( \xi \circ
\pi \right)$. The section $A$ evaluated at the point $\eta \in T_M$ will be
denoted by $A_{\eta}$.

We notice that we can write $A = \alpha + i \beta$, with 
$$
\alpha, \beta \in
C^{\infty} \left( T_M, \pi^{\ast} T^{\ast}_M \otimes_{_{\mathbbm{R}}} T_{T_M}
\right),
$$ 
such that $d \pi \cdot \alpha =\mathbbm{I}_{\pi^{\ast} T_M}$ and
$\beta_{\eta} = T_{\eta} B_{\eta}$, with $B \in C^{\infty} \left( T_M,
\pi^{\ast} \tmop{End} \left( T_M \right) \right)$. The section $A$ determines
an almost complex structure $J_A$ over $T_M$ such that 
$$
T_{T_M, J_A, \eta}^{0,
1} = A_{\eta} \left( \mathbbm{C}T_{M, \pi \left( \eta \right)} \right) \subset
\mathbbm{C}T_{T_M, \eta}\,,
$$
if and only if
\begin{equation}
  \label{JAcompat} A_{\eta} \left( \mathbbm{C}T_{M, \pi \left( \eta \right)}
  \right) \cap \overline{A_{\eta} \left( \mathbbm{C}T_{M, \pi \left( \eta
  \right)} \right)} = 0\, .
\end{equation}
This condition is equivalent to the property:
\begin{equation}
  \label{Aequality}  \overline{A^{^{}}_{\eta} \left( \bar{\xi}_1 \right)} =
  A_{\eta} \left( \xi_2 \right),
\end{equation}
implies $\xi_1 = \xi_2 = 0$. Taking $d_{\eta} \pi$ in the equality
(\ref{Aequality}) we infer $\xi_1 = \xi_2$. Thus equality (\ref{Aequality}) is
equivalent to $\left( \overline{A} - A \right) \left( \xi_1 \right) = 0$ and
the previous property is equivalent to $\tmop{Ker} \left( \overline{A} - A
\right) = 0$, i.e.
\begin{eqnarray*}
  B  \in  C^{\infty} \left( T_M, \pi^{\ast} \tmop{GL} \left( T_M \right)
  \right) .
\end{eqnarray*}
We notice that with respect to the canonical complex structure of $T_{T_M \mid
M}$ we have \ the equality $\left( u, v \right)^{0, 1} = \left( \xi, i \xi
\right)$, with $\xi \assign \left( u - i v \right) / 2$. Then $J_A$ is an
extension of this complex structure over an open neighborhood $U \subseteq
T_M$ of $M$ if and only if for any point $p \in M$ we have $\alpha_{0_p} = d_p
0_M$ and $B_{0_p} =\mathbbm{I}_{T_{M, p}}$. We denote by
\begin{eqnarray*}
  T  \in  C^{\infty} \left( T_M, \pi^{\ast} T^{\ast}_M
  \otimes_{_{\mathbbm{R}}} T_{T_M} \right),
\end{eqnarray*}
the canonical section which at the point $\eta \in T_M$ takes the value
$T_{\eta}$.

\begin{definition}
  Let $M$ be a smooth manifold. An $M$-totally real almost complex structure
  over an open neighborhood $U \subseteq T_M$ of the image of the zero section $0_M$ is a couple
  $\left( \alpha, B \right)$ with 
  $$
  \alpha \in C^{\infty} \left( U, \pi^{\ast}
  T^{\ast}_M \otimes_{_{\mathbbm{R}}} T_{T_M} \right),
  $$ and 
$$
B \in C^{\infty}
  \left( U, \pi^{\ast} \tmop{GL} \left( T_M \right) \right),
$$ such that $d \pi
  \cdot \alpha =\mathbbm{I}_{\pi^{\ast} T_M}$ over $U$ and such that
  $\alpha_{0_p} = d_p \,0_M$, $B_{0_p} =\mathbbm{I}_{T_{M, p}}$, for all $p \in
  M$. With $A := \alpha + i T B$, the almost complex structure $J_A$ associated
  to $\left( \alpha, B \right)$ is the one which satisfies 
  $$
  T_{T_M, J_A,
  \eta}^{0, 1} = A_{\eta} \left( \mathbbm{C}T_{M, \pi \left( \eta \right)}
  \right) \subset \mathbbm{C}T_{T_M, \eta} \,,
  $$ for all $\eta \in U \subseteq
  T_M$.
\end{definition}

Every almost complex smooth extension of the canonical complex structure
$J^{\tmop{can}}$ of $T_{T_M \mid M}$ over a neighborhood of $M$ inside $T_M$ can be expressed, over a sufficiently small neighborhood $U\subseteq T_M$ of
$M$, as the almost complex structure associated to a unique $M$-totally real
almost complex structure over $U$.

We provide below an explicit formula for the almost complex structure $J_A$.
For this purpose we notice first that for any vector $\xi \in T_{T_M, \eta}$,
\begin{eqnarray*}
  \xi_{J_A}^{0, 1} & = & \frac{1}{2} A_{\eta} \left[ d_{\eta} \pi - i B^{-
  1}_{\eta} T^{- 1}_{\eta} \left( \mathbbm{I}_{T_{T_M}} - \alpha_{\eta}\,
  d_{\eta} \pi \right) \right] \xi\,,\\
  &  & \\
  \xi_{J_A}^{1, 0} & = & \frac{1}{2}  \overline{A}_{\eta} \left[ d_{\eta} \pi
  + i B^{- 1}_{\eta} T^{- 1}_{\eta} \left( \mathbbm{I}_{T_{T_M}} -
  \alpha_{\eta} \,d_{\eta} \pi \right) \right] \xi \,.
\end{eqnarray*}
Indeed $\xi_{J_A}^{0, 1} \in T_{T_M, J_A, \eta}^{0, 1}$, $\xi_{J_A}^{1, 0} \in
T_{T_M, J_A, \eta}^{1, 0}$ and $\xi = \xi_{J_A}^{1, 0} + \xi_{J_A}^{0, 1}$. We
deduce the expression
\begin{equation}
  \label{FormulJA} J_{A, \eta} = - \alpha_{\eta}\, B^{- 1}_{\eta} T^{- 1}_{\eta}
  \left( \mathbbm{I}_{T_{T_M}} - \alpha_{\eta}\, d_{\eta} \pi \right) + T_{\eta}\,
  B_{\eta}\, d_{\eta} \pi .
\end{equation}
This shows that for any $\alpha$-horizontal vector $\xi \in T_{T_M, \eta}$,
i.e. $\xi = \alpha_{\eta}\, d_{\eta} \pi \,\xi$, we have
\begin{eqnarray*}
  J_{A, \eta} \,\xi & = & T_{\eta} \,B_{\eta}\, d_{\eta} \pi\, \xi \,.
\end{eqnarray*}
In equivalent terms
\begin{equation}
  \label{partcActJ} J_{A, \eta} \,\alpha_{\eta} \,v = T_{\eta}\, B_{\eta}\, v\,,
\end{equation}
for any $\eta \in U \subset T_M$ and any $v \in T_{M, \pi \left( \eta
\right)}$. Moreover (\ref{FormulJA}) implies
\begin{equation}
  \label{JAvert} J_{A, \eta \mid \tmop{Ker} d_{\eta} \pi} = - \,\alpha_{\eta}\,
  B^{- 1}_{\eta} T^{- 1}_{\eta} .
\end{equation}
A well known theorem by Bruhat and Whitney \cite{Br-Wh} states that for any
real-analytic manifold $M$ there exist a complex manifold $\left( X, J
\right)$ and a real-analytic embedding of $M$ in $X$ such that as a
sub-manifold of $X$, $M$ is maximally totally real. In addition one can arrange
that $X$ is an open neighborhood $U \subseteq T_M$ of the zero section and
$J_{\mid M} = J^{\tmop{can}}$.

Moreover Bruhat and Whitney show \cite{Br-Wh} that if $X$ is a real-analytic
manifold equipped with two different real-analytic complex structures $J_1$
and $J_2$ which contains a real analytic sub-manifold $M$ which is maximally
totally real with respect to both $J_1$ and $J_2$, then there exist
neighborhoods $U_1$ and $U_2$ of $M$ inside $X$ and a real-analytic
diffeomorphism $\kappa : U_1 \longrightarrow U_2$ which is the identity on $M$
and is a holomorphic mapping of $\left( U_1, J_1 \right)$ onto $\left( U_2,
J_2 \right)$.

In other words the structure $J$ constructed by Bruhat and Whitney in \cite{Br-Wh} is unique
up to complex isomorphisms. 
\\
We state below our results on the integrability conditions for $J$.

\subsection{The integrability equations for $M$-totally real almost complex
structures}

Let $\left( E, \pi_E, M \right)$ be a vector bundle over a manifold $M$. For an arbitrary section $B \in C^{\infty}( E, \pi_E^{\ast} \left( T^{\ast}_M \otimes E
\right))$, we define the derivative along the fiber
\begin{eqnarray*}
  D B  \in  C^{\infty} \left( E, \pi_E^{\ast} \left( E^{\ast} \otimes
  T^{\ast}_M \otimes E \right)_{_{}} \right),
\end{eqnarray*}
by the formula
\begin{eqnarray*}
  D_{\eta} B \left( v \right)  \assign  \frac{d}{d t} _{\mid_{t = 0}}
  B_{\eta + t v} \in T^{\ast}_{M, p} \otimes E_p\,,
\end{eqnarray*}
for any $\eta, v \in E_p$. We denote by $\tmop{Alt}_2$ the alternating
operator (without normalizing coefficient!) which acts on the first two
entries of a tensor. For any morphism $A : T_M \longrightarrow E$ and any
bilinear form $\beta : E \times T_M \longrightarrow E$ we define the
contraction operation 
$$
A \neg \beta \assign \tmop{Alt}_2 \left( \beta \circ A
\right),
$$
where the composition operator $\circ$ act on the first entry of
$\beta$. For a given
  covariant derivative operator $\nabla$ acting on the smooth sections of $T_M$, we denote by $H^{\nabla}$ the linear projection to the associated horizontal distribution. (See lemmas 14, 16 and definition  5 in subsection 9.1 of the appendix for precise 
  definitions and properties of $H^{\nabla}$). 
\begin{theorem}
  \label{GenInteg}Let $M$ be a smooth manifold and let $J_A$ with $A = \alpha
  + i T B$ be an $M$-totally real almost complex structure over an open
  neighborhood $U \subseteq T_M$ of the image of the zero section. Let also $\nabla$ be a
  covariant derivative operator acting on the smooth sections of $T_M$.
  Then $J_A$ is 
  integrable over $U$ if and only if the complex section $S
  \assign T^{-1}(H^{\nabla}-\overline{A}\,)$ satisfies the equation
  \begin{equation}
    \label{MainInteg} H_{\eta}^{\nabla} \neg \left( \nabla^{\tmop{End} \left(
    T_M \right), \pi} S \right)_{\eta} - S_{\eta} \neg D_{\eta} S \noplus
    \noplus \noplus + S_{\eta} \tau^{\nabla} + R^{\nabla} \cdot \eta = 0\,,
  \end{equation}
  for any point $\eta \in U$, where $\nabla^{\tmop{End} \left( T_M \right),
  \pi}$ is the covariant derivative operator acting on the smooth sections of
  $\pi^{\ast} \tmop{End} \left( T_M \right)$ induced by $\nabla$ and where
  $\tau^{\nabla}$ and $R^{\nabla}$ are respectively the torsion and curvature
  forms of $\nabla$.
\end{theorem}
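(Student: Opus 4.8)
The strategy is to characterize integrability through the Newlander--Nirenberg theorem, i.e. by the involutivity of the $(1,0)$--distribution of $J_A$, and to test this involutivity on the frame furnished by $\nabla$. First I would rewrite the distribution in terms of $S$. Since $\alpha = H^{\nabla} - T\,\Gamma$ and $A = \alpha + i\,T B$, one gets $A(\xi) = H^{\nabla}(\xi) + T\big((iB - \Gamma)\xi\big) = H^{\nabla}(\xi) - T(\overline{S}\,\xi)$, so that $T^{0,1}_{T_M, J_A,\eta} = \{\,H^{\nabla}_{\eta}(\xi) - T_{\eta}(\overline{S}_{\eta}\xi) : \xi\in\mathbbm{C}T_{M,\pi(\eta)}\,\}$. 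Conjugating, the $(1,0)$--distribution is spanned by the local sections $\widetilde A(\xi) := H^{\nabla}(\xi) - T(S\,\xi)$, $\xi\in\mathbbm{C}T_M$. Because the Lie bracket commutes with complex conjugation, involutivity of $T^{1,0}$ is equivalent to involutivity of $T^{0,1}$, hence to integrability; working with $T^{1,0}$ is precisely what makes the equation come out in $S$ rather than in $\overline{S}$.

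The computational core is the evaluation of $[\widetilde A(\xi),\widetilde A(\zeta)]$. For this I would first record the three structural bracket identities attached to $\nabla$ on $T_M$: for $\xi,\zeta\in C^{\infty}(M,\mathbbm{C}T_M)$ and $\sigma,\rho\in C^{\infty}(U,\pi^{\ast}\mathbbm{C}T_M)$ one has $[H^{\nabla}(\xi),H^{\nabla}(\zeta)] = H^{\nabla}([\xi,\zeta]) - T(R^{\nabla}(\xi,\zeta)\cdot\eta)$, $[H^{\nabla}(\xi),T(\sigma)] = T(\nabla^{\pi}_{H^{\nabla}(\xi)}\sigma)$ and $[T(\sigma),T(\rho)] = T\big(D\rho(\sigma) - D\sigma(\rho)\big)$, where $\nabla^{\pi}$ is the pull--back covariant derivative on $\pi^{\ast}T_M$ and $D$ is the fiberwise derivative of the statement. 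Each of these is verified by a one line computation in fibered coordinates $(x,y)$, using $H^{\nabla}(\partial_{x^i}) = \partial_{x^i} - y^m\gamma^{k}_{im}\partial_{y^k}$ and $T(\partial_{x^k}) = \partial_{y^k}$; in particular the vertical part of the first bracket reproduces exactly $-R^{\nabla}(\partial_i,\partial_j)\cdot\eta$ with the standard curvature sign.

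Expanding $[\widetilde A(\xi),\widetilde A(\zeta)]$ by these identities, its horizontal part is precisely $H^{\nabla}([\xi,\zeta])$; subtracting $\widetilde A([\xi,\zeta]) = H^{\nabla}([\xi,\zeta]) - T(S[\xi,\zeta])$ leaves a purely vertical vector $T(W_{\xi,\zeta})$. Since $d\pi$ restricts to an isomorphism on each fiber of $T^{1,0}$, that subspace meets $\ker d\pi$ only at the origin, so involutivity holds if and only if $W_{\xi,\zeta} = 0$ for all $\xi,\zeta$, an identity which is manifestly tensorial and antisymmetric. It then remains to simplify
\[
 W_{\xi,\zeta} = -\,R^{\nabla}(\xi,\zeta)\cdot\eta - \nabla^{\pi}_{H^{\nabla}(\xi)}(S\zeta) + \nabla^{\pi}_{H^{\nabla}(\zeta)}(S\xi) + D(S\zeta)(S\xi) - D(S\xi)(S\zeta) + S[\xi,\zeta].
\]
Using the Leibniz rule $\nabla^{\pi}_{H^{\nabla}(\xi)}(S\zeta) = (\nabla^{\tmop{End}(T_M),\pi}_{H^{\nabla}(\xi)}S)\zeta + S\,\nabla^{\pi}_{H^{\nabla}(\xi)}\zeta$, the compatibility $\nabla^{\pi}_{H^{\nabla}(\xi)}\zeta = \nabla_{\xi}\zeta$ for the basic section $\zeta$, the identity $D(S\zeta)(S\xi) = (D S(S\xi))\zeta$ (valid because $\zeta$ is constant along the fibers), and the reassembly $-S\nabla_{\xi}\zeta + S\nabla_{\zeta}\xi + S[\xi,\zeta] = -S\,\tau^{\nabla}(\xi,\zeta)$, one recognizes, up to the overall sign $-1$, the contractions of the statement:
\[
 W_{\xi,\zeta} = -\big(H^{\nabla}\neg(\nabla^{\tmop{End}(T_M),\pi}S) - S\neg DS + S\,\tau^{\nabla} + R^{\nabla}\cdot\eta\big)(\xi,\zeta).
\]
Hence $W\equiv 0$ is exactly equation (\ref{MainInteg}).

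I expect the only real obstacle to be the horizontal--vertical bracket and the bookkeeping it forces: the vertical field $T(S\xi)$ depends on $\eta$ through $S$, so its bracket with a horizontal lift mixes the horizontal action on that fiber dependence with the connection term, and one has to check that these combine precisely into $\nabla^{\pi}$ and, after the Leibniz splitting, into the induced operator $\nabla^{\tmop{End}(T_M),\pi}$. Keeping the curvature and torsion signs coherent, so that the residual factor is a harmless global $-1$, is the one delicate point; the remaining work is the routine recognition of the operators $H^{\nabla}\neg(\,\cdot\,)$ and $S\neg(\,\cdot\,)$ from the antisymmetrized bracket terms.
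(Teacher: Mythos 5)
Your proposal is correct, and the signs all check out: your obstruction $W_{\xi,\zeta}$ is exactly the negative of the left hand side of (\ref{MainInteg}), and your three structural bracket identities are precisely the paper's toolkit --- the horizontal--vertical bracket is formula (\ref{BraketConnGen}), the vertical--vertical bracket is the identity (\ref{BracketTB}) via the fiberwise Lie algebra isomorphism $[T\sigma, T\rho] = T\left( D\rho\left(\sigma\right) - D\sigma\left(\rho\right) \right)$, and the horizontal--horizontal bracket is lemma \ref{TorsCurv}. The genuine difference is in the organization of the reduction. The paper stays on the $(0,1)$-side: it characterizes integrability as the vanishing of the curvature field $\Theta^A$ of the complex horizontal form $A$, applies the two-connection comparison formula (\ref{ComparCurv1,2}) with $\alpha_1 = A$, $\alpha_2 = \alpha$, separates real and imaginary parts into the real system (\ref{HpCoolAinteg}) in $\Gamma$ and $B$ (using the curvature expansion (\ref{curvExpans}) with $E = T_M$), and only at the end recombines the two real equations into the single complex equation for $S = \Gamma + i B$. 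You instead conjugate first, writing the $(1,0)$-distribution as the graph of the single complex perturbation $-S$ of $H^{\nabla}$, so that your whole computation amounts to the complex-bilinear extension of (\ref{curvExpans}) with $B$ replaced by $-S$, and (\ref{MainInteg}) drops out in one pass with no split-and-recombine. What the paper's detour buys is the intermediate real system (\ref{HpCoolAinteg}) and a reusable comparison lemma, exploited elsewhere (e.g., the remark recovering lemma \ref{affineLm} in the case $\left( \alpha, B \right) = \left( H^{\nabla}, \mathbbm{I}_{\pi^{\ast} T_M} \right)$); what your route buys is brevity and conceptual transparency about why the equation closes up in $S$ alone. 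The only two points you should make explicit are that the bracket identities, proved in the paper for real vector fields and sections, extend complex-bilinearly, and that checking involutivity of $T^{1, 0}$ on the frame $\widetilde{A} \left( e_j \right)$ for a local frame $\left( e_j \right)$ of $T_M$ suffices, by the usual tensoriality of the Frobenius condition.
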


We notice that $S_{\mid M} = i\mathbbm{I}_{T_M}$ by the conditions
$\alpha_{0_p} = H^{\nabla}_{0_p} = d_p \,0_M$ and $B_{0_p} =\mathbbm{I}_{T_{M,
p}}$.
\\
\\
{\tmstrong{Notation for the statement of the main theorem}}.
\\
\\
For any $A \in T^{\ast, \otimes p}_M \otimes \tmop{End}_{_{\mathbbm{C}}}
\left( \mathbbm{C}T_M \right)$ and for any $\theta \in T^{\ast, \otimes q}_M
\otimes \mathbbm{C}T_M$, the product operations of tensors $A \cdot \theta, A
\neg \,\theta \in T^{\ast, \otimes \left( p + q \right)}_M \otimes
\mathbbm{C}T_M$ are defined by
\begin{eqnarray*}
  \left( A \cdot \theta \right) \left( u_1, \ldots, u_p, v_1, \ldots, v_q
  \right) & \assign & A (u_1, \ldots, u_p) \cdot \theta (v_1, \ldots, v_q)\,,\\
  &  & \\
  \left( A \neg \theta \right)  \left( u_1, \ldots, u_p, v_1, \ldots, v_q
  \right) & \assign & \sum_{j = 1}^q \theta (v_1, \ldots, A (u_1, \ldots, u_p)
  \cdot v_j, \ldots, v_q) \,.
\end{eqnarray*}
We will denote for notation simplicity $R^{\nabla} \nosymbol . \, \theta
\assign R^{\nabla} \cdot \theta - R^{\nabla} \neg\, \theta$. We will denote by
$\tmop{Circ}$ the circular operator
\begin{eqnarray*}
  (\tmop{Circ} \theta) \left( v_1, v_2, v_3, \bullet \right) & \assign & \theta
  \left( v_1, v_2, v_3, \bullet \right) + \theta \left( v_2, v_3, v_1, \bullet
  \right) + \theta \left( v_3, v_1, v_2, \bullet \right),
\end{eqnarray*}
acting on the first three entries of any $q$-tensor $\theta$, with $q
\geqslant 3$. We define also the permutation operation $\theta_2 \left( v_1,
v_2, \bullet \right) \assign \theta \left( v_2, v_1, \bullet \right)$.

For any covariant derivative $\nabla$ acting on the smooth sections of
$\mathbbm{C}T_M$ we define the operator
\begin{eqnarray*}
  d_1^{\nabla} : C^{\infty} ( M, T^{\ast, \otimes k}_M
  \otimes_{_{\mathbbm{R}}} \mathbbm{C}T_M )  \longrightarrow 
  C^{\infty} ( M, \Lambda^2 T^{\ast}_M
  \otimes_{_{\mathbbm{R}}} T^{\ast, \otimes \left( k - 1 \right)}_M
  \otimes_{_{\mathbbm{R}}} \mathbbm{C}T_M),
\end{eqnarray*}
with $k \geqslant 1$ as follows
\begin{eqnarray*}
  d_1^{\nabla} A \left( \xi_1, \xi_2, \mu \right) & \assign & \nabla_{\xi_1} A
  \left( \xi_2, \mu \right) - \nabla_{\xi_2} A \left( \xi_1, \mu \right),
\end{eqnarray*}
with $\xi_1, \xi_2 \in T_M$ and with $\mu \in T^{\oplus \left( k - 1
\right)}_M$. 
Moreover for any 
\begin{eqnarray*}
 A&\in& C^{\infty}
(M, T^{\ast, \otimes \left( k + 1 \right)}_M
\otimes_{_{\mathbbm{R}}} \mathbbm{C}T_M)\,,
\\
\\
B&\in&C^{\infty}
(M, T^{\ast, \otimes \left( l + 1 \right)}_M
\otimes_{_{\mathbbm{R}}} \mathbbm{C}T_M)\,,
\end{eqnarray*}
we define the exterior product
\begin{eqnarray*}
  A \wedge_1 B & \in & C^{\infty} ( M, \Lambda^2 T^{\ast}_M
  \otimes_{_{\mathbbm{R}}} T^{\ast, \otimes \left( k + l - 1 \right)}_M
  \otimes_{_{\mathbbm{R}}} \mathbbm{C}T_M),
\end{eqnarray*}
as
\begin{eqnarray*}
  \left( A \wedge_1 B \right) \left( \xi_1, \xi_2, \eta, \mu \right) & \assign
  & A \left( \xi_1, B \left( \xi_2, \eta \right), \mu \right) - A \left(
  \xi_2, B \left( \xi_1, \eta \right), \mu \right),
\end{eqnarray*}
with $\xi_1, \xi_2 \in T_M$, $\eta \in T^{\oplus l}_M$ and $\mu \in T^{\oplus
\left( k - 1 \right)}_M$. We denote by $\tmop{Sym}_{r_1, \ldots, r_s}$ the
symmetrizing operator (without normalizing coefficient!) acting on the entries
$r_1, \ldots, r_s$ of a multi-linear form. 
We use in this paper the common convention that a sum and a product running
over an empty set is equal respectively to 0 and 1. 

With these notation we
can state our main theorem.

\begin{theorem}
  $\left( \text{\tmstrong{Integrability in the fiberwise real analytic case}}\right)$. \label{Maintheorem}
  
  Let $M$ be smooth manifold equipped with a torsion free covariant derivative operator $\nabla$ acting on
  the smooth sections of the tangent bundle $T_M$, let $U \subseteq T_M$ be an open neighborhood
  of the image of the zero section with connected fibers
  let $J_A$ be an $M$-totally real almost complex structure over $U$, real-analytic 
  along the fibers of $U$ and consider the fiberwise Taylor expansion at the origin
  \begin{eqnarray*}
    T^{-1}_{\eta} (H^{\nabla}-\overline{A}\,)_{\eta} \cdot \xi & = & i \xi+\sum_{k \geqslant 1} S_k \left( \xi, \eta^k
    \right),
  \end{eqnarray*}
  with $\eta \in T_M$ in a neighborhood of the image of the zero section, with $\xi \in T_{M, \pi \left( \eta \right)}$ arbitrary,
  with
  $$
  S_k \in C^{\infty} (
  M ,T^{\ast}_M \otimes_{_{\mathbbm{R}}} S^k T^{\ast}_M
  \otimes_{_{\mathbbm{R}}} \mathbbm{C}T_M )\,,
  $$ 
  with $\eta^k \assign \eta^{\times k} \in
  T^{\oplus k}_{M, \pi \left( \eta \right)}$ and let
  $\nabla^{S_1}$ be the complex covariant derivative operator acting on the
  smooth sections of $\mathbbm{C}T_M$ defined by 
  $$
  \nabla^{S_1}_{\xi} \eta
  \assign \nabla_{\xi} \eta + S_1 \left( \xi, \eta \right).
  $$
Then $J_A$ is integrable over $U$ if and only if
 $
    S_1  \in  C^{\infty} \left( M, S^2 T^{\ast}_M
    \otimes_{_{\mathbbm{R}}} \mathbbm{C}T_M \right)
    $,
$($i.e. $\nabla^{S_1}$ is torsion free$)$ and for all $k \geqslant 2$,
  \begin{eqnarray*}
   && S_k  =  \frac{i}{k} \nabla^{S_1} \sigma_{k - 1} + \frac{i}{\left( k + 1
    \right) !}
    \tmop{Sym}_{2, \ldots, k + 1} \beta_{k - 1} \left( \sigma_{k - 2} \right)
    + \sigma_k\,,\\
    &  & \\
   && \sigma_k  \in  C^{\infty} \left( M, S^{k + 1} T^{\ast}_M
    \otimes_{_{\mathbbm{R}}} \mathbbm{C}T_M \right),\\
    &  & \\
   && \tmop{Circ} \beta_{k + 1} \left( \sigma_k \right)  =  0\,,
  \end{eqnarray*}
  where $\sigma_1 \assign 0$, $\beta_1 \left( \sigma_0 \right) \assign
  R^{\nabla^{S_1}}$, $\beta_2 \left( \sigma_1 \right) \assign - \frac{i}{3}
  ( \nabla^{S_1} R^{\nabla^{S_1}})_2$ and for all $k \geqslant 3$,
  \begin{eqnarray*}
    \beta_k \left( \sigma_{k - 1} \right) & \assign & \frac{i}{k}
    R^{\nabla^{S_1}} .\, \sigma_{k - 1} + \frac{1}{\left( k + 1 \right) ! k!} \tmop{Sym}_{3, \ldots, k + 2}  \theta_k \left( \sigma_{k - 1} \right), \\
    &  & \\
   \theta_k \left( \sigma_{k - 1} \right) & \assign & i \sum_{r = 3}^{k - 1}
    \frac{\left( r + 1 \right) !}{r}  ( i\,d_1^{\nabla^{S_1}})^{k - r} ( R^{\nabla^{S_1}} . \,\sigma_{r - 1})\\
    &  & \\
    & - & 2\, i\, ( i\,d_1^{\nabla^{S_1}})^{k - 2} ( \nabla^{S_1}
    R^{\nabla^{S_1}})_2\\
    &  & \\
    & + &  \sum_{r = 4}^{k +1} r! \sum_{p = 2}^{r - 2} (
    i\,d_1^{\nabla^{S_1}} )^{k +1- r} \left( p S_p \wedge_1 S_{r - p}
    \right) .
  \end{eqnarray*}
\end{theorem}

In more explicit terms
\begin{eqnarray}
  S_2 & = & S^0_2 + \sigma_2\,,\label{S2express}
  \\\nonumber
  &  & \\
  S^0_2 \left( \xi_1, \xi_2, \xi_3 \right) & \assign & \frac{i}{6}  \left[
  R^{\nabla^{S_1}} \left( \xi_1, \xi_2 \right) \xi_3 + R^{\nabla^{S_1}} \left(
  \xi_1, \xi_3 \right) \xi_2 \right],\label{S20express}
  \\\nonumber
  &  & \\
  \sigma_2 & \in & C^{\infty} \left( M, S^3 T^{\ast}_M
  \otimes_{_{\mathbbm{R}}} \mathbbm{C}T_M \right),\label{sig2eq}
  \end{eqnarray}
  \begin{eqnarray}
  \tmop{Circ} \beta_3 \left( \sigma_2 \right)  & = & 0\,,\label{sig2circeq}
  \\\nonumber
  &  & \\
  \beta_3 \left( \sigma_2 \right) & \assign & \frac{i}{3} R^{\nabla^{S_1}} .\,
  \sigma_2 + \frac{1}{4!3!} \tmop{Sym}_{3, 4, 5} \theta_3 \left( \sigma_2
  \right),\label{B3express}
  \\\nonumber
  &  & \\
  \theta_3 \left( \sigma_2 \right) & \assign & 2 d_1^{\nabla^{S_1}} (
  \nabla^{S_1} R^{\nabla^{S_1}})_2 + 4! 2 S_2 \wedge_1 S_2 \,.
\end{eqnarray}
The assumption that the complex structure tensor is real-analytic along the
fibers of the tangent bundle is quite natural. Indeed in the case $M$ is real
analytic then the $M$-totally real complex structure constructed by Bruhat and
Whitney \cite{Br-Wh} is also real analytic with respect to the real analytic structure of
the tangent bundle induced by $M$.

In this paper we request from the readers very good knowledge of the geometric
theory of linear connections. Basics of such theory can be found in the
appendix.
\subsection{Application of the main integrability result}

Over a Riemannian manifold $\left( M, g \right)$, we denote by $${\cal V}^g \ni(\eta,t)\longmapsto \Phi_t^g (\eta)\in T_M,$$
the geodesic flow, where ${\cal V}^g\subset T_M\times \R$ is an open neighborhood of $T_M\times \{0\}$.
Let $\nabla^g$ be the Levi-Civita connection of the metric $g$. We denote by $H^g$ the liner projection to the associated horizontal distribution.
We state the following corollary of the main theorem \ref{Maintheorem}.

\begin{corollary}\label{MainCoroll}
  Let $\left( M, g \right)$ be a smooth Riemannian manifold, let $U\subseteq T_M$ be an open 
  neighborhood of the image of the zero section with connected fibers, let $J \equiv
  J_A$ be an $M$-totally real almost complex
  structure over $U$, real
  analytic along the fibers of $U$ and consider the fiberwise Taylor expansion at the origin 
  \begin{eqnarray*}
    T^{-1}_{\eta} (H^g-\overline{A}\,)_{\eta} \cdot \xi  =  i \xi+\sum_{k \geqslant 1} S_k \left( \xi, \eta^k
    \right),
  \end{eqnarray*}
  with $\eta \in T_M$ in a neighborhood of the image of the zero section, with $\xi \in T_{M, \pi \left( \eta \right)}$ arbitrary,
  with
  $$
  S_k \in C^{\infty} (
  M ,T^{\ast}_M \otimes_{_{\mathbbm{R}}} S^k T^{\ast}_M
  \otimes_{_{\mathbbm{R}}} \mathbbm{C}T_M )\,,
  $$ 
  and with $\eta^k \assign \eta^{\times k} \in
  T^{\oplus k}_{M, \pi \left( \eta \right)}$.   Then the statements (a) and (b) below are equivalent.
 
  (a) The almost complex structure $J$ is integrable over $U$ and for any $\eta \in U$ the smooth map
  $\displaystyle{\psi_{\eta} : t + i s \longmapsto s \Phi_t^g \left( \eta \right)}$, defined
  in a neighborhood of $0 \in \mathbbm{C}$ is $J$-holomorphic.
  
  (b) The components $S_k$ satisfy  $S_1 =
  0$,
  \begin{eqnarray*}
    S_k  =  \frac{i}{\left( k + 1 \right) ! k!} \tmop{Sym}_{2, \ldots,
    k + 1} \Theta_k \left( g \right),
  \end{eqnarray*}
  for all $k \geqslant 2$, with $\Theta_2 \left( g \right) \assign 2 R^g$ and with
  \begin{eqnarray*}
    \Theta_k \left( g \right)  \assign  - 2 i ( i d_1^{\nabla^g})^{k - 3} \left( \nabla^g R^g \right)_2   +  \sum_{r = 4}^k r ! \sum_{p = 2}^{r - 2}
    ( i d_1^{\nabla^g})^{k - r} \left( p S_p \wedge_1 S_{r - p} \right),
  \end{eqnarray*}
  for all $k \geqslant 3$ and the equations
  $\tmop{Circ} \tmop{Sym}_{3, \ldots, k + 1} \Theta_k \left( g \right) = 0$, are satisfied
  for all $k \geqslant 4$.
\end{corollary}
It has been 64 years since the existence of complex
structures on Grauert Tubes was proven for the first
time by Bruhat-Whitney \cite{Br-Wh}. 
Still, up to now, the explicit form of the Taylor expansion has remained mysterious.
This is finally clarified in the main theorem 1.5 in \cite{Pal-Sal}, which is based on the statement of corollary \ref{MainCoroll}.
Indeed in \cite{Pal-Sal}, theorem 1.5,  we obtain a rather simple and explicit global expression for the complex
structure on Grauert tubes. 

The expression in theorem 1.5 in  \cite{Pal-Sal} (see also theorem 1.6 there for a more general statement) is important for applications to analytic micro local analysis
over manifolds. It allows indeed an explicit global construction of the complex extension of a
given global Fourier integral operator defined on a real analytic manifold.

The expression in theorem 1.5 in \cite{Pal-Sal} allows also to perform useful explicit global intrinsic operator
computations in the sense of \cite{Pali}. In more explicit terms, given a global intrinsic section over the
Grauert tube, an explicit formula for the complex structure such as the one in theorem 1.5 in \cite{Pal-Sal}, allows to
determine if the section is holomorphic or not. 

The proof of corollary \ref{MainCoroll} will be given in the sub-section \ref{Prooof MainCoroll}. In the case $\left( M, g \right)$ is a compact real 
analytic Riemannian manifold, the complex structure in the statement of corollary \ref{MainCoroll} exist thanks to
the work of Guillemin-Stenzel \cite{Gu-St}, Lempert \cite{Lem}, Lempert-Sz\"oke \cite{Le-Sz1, Le-Sz2},
Sz\"oke \cite{Szo1, Szo2} as well as Bielawski \cite{Bie}. Thus in this case the integrability conditions
  \begin{equation*}
  I_k:=\tmop{Circ} \tmop{Sym}_{3, \ldots, k + 1} \Theta_k \left( g \right) = 0,
\end{equation*}
in the statement of corollary \ref{MainCoroll} are satisfied
  for all $k \geqslant 4$. 
We notice in particular that in the case $k=4$, the equation $I_4:=\tmop{Circ} \tmop{Sym}_{3, 4, 5}
\Theta_4 \left( g \right) = 0$, expands out to
\begin{equation}
  \label{FrstRiemInteg} \tmop{Circ} \tmop{Sym}_{3, 4, 5} \left[ 3
  d_1^{\nabla^g} \left( \nabla^g R^g \right)_2 - 2 \tilde{R}^g
  \wedge_1 \tilde{R}^g  \right] = 0\,,
\end{equation}
with $\tilde{R}^g \assign \tmop{Sym}_{2, 3} R^g$. We will show in a quite general set-up that the
previous equation is an identity. We have indeed the following result which shows the vanishing of $I_4$.
\begin{proposition}\label{Vanish}
  Let $\nabla$ be a torsion free complex covariant derivative operator acting
  on the smooth sections of the bundle $\mathbbm{C}T_M$ with curvature operator
  $R^{\nabla} \left( \cdot, \cdot \right) \cdot \equiv R^{\nabla} \left(
  \cdot, \cdot, \cdot \right)$. Let $\tilde{R}^{\nabla} \assign \tmop{Sym}_{2,
  3} R^{\nabla}$.
  Then
  \begin{equation}
    \label{KeyFrstIntegReduc} \tmop{Circ} \tmop{Sym}_{3, 4, 5} \left[ 3
    d_1^{\nabla} \left( \nabla R^{\nabla} \right)_2 - 2
    \tilde{R}^{\nabla} \wedge_1 \tilde{R}^{\nabla} \right] =0 \,.
  \end{equation}
\end{proposition}
The proof will be provided in section \ref{Vanishing4}. In subsection 5.1 in \cite{Pal-Sal} we provide a shorter proof of the vanishing of $I_4$ in proposition \ref{Vanish} by using some more advanced combinatorial techniques. In subsection 5.2 in \cite{Pal-Sal} we show also the vanishing of $I_5$. 
Using computer algebra (see sections 2 and 5 in \cite{Pa-Sa-Ge}) we can show the vanishing of $I_k$ for $k=4,...7$. In section 5 in \cite{Pa-Sa-Ge} we use the explicit expression in theorems 1.5, 1.6 in \cite{Pal-Sal} and we observe that in the case $k=7$, the computer perform the computation in approximately one second, but we expect that the case $k=8$ would take a computation of approximately two weeks. We feel confident at this point to formulate the following conjecture. 
\begin{Conjecture}\label{MainConjecture}
Let $M$ be a smooth manifold and let $\nabla$ be a torsion free complex covariant derivative operator acting
  on the smooth sections of the bundle $\mathbbm{C}T_M$. Then the sequence of tensors $S_k \in C^{\infty} \left( M, T^{\ast}_M
  \otimes_{_{\mathbb{R}}} S^k T^{\ast}_M \otimes_{_{\mathbb{R}}} \mathbb{C}T_M
  \right)$, $k \geqslant 2$, defined by the inductive rule
  \begin{eqnarray*}
    S_k & := & \frac{i}{(k + 1) ! k!} \tmop{Sym}_{2, \ldots, k + 1}
    \Theta^{\nabla}_k,
  \end{eqnarray*}
  with $\Theta^{\nabla}_2 \assign 2 R^{\nabla}$ and with
  \begin{eqnarray*}
    \Theta^{\nabla}_k & \assign & - 2 i (i d_1^{\nabla})^{k - 3} (\nabla
    R^{\nabla})_2 + \sum_{r = 4}^{k } r ! \sum_{p = 2}^{r - 2} (i
    d_1^{\nabla})^{k  - r} (p S_p \wedge_1 S_{r - p}),
  \end{eqnarray*}
  for all $k \geqslant 3$,  satisfies the identities
  \begin{eqnarray*}
  I_k:=\tmop{Circ} \tmop{Sym}_{3, \ldots, k + 1} \Theta^{\nabla}_k = 0  ,
  \end{eqnarray*}
 for all $k \geqslant 4$. 
 \end{Conjecture}

A general mathematical proof for the vanishing of all the integrability conditions $I_k$ is part of a long and difficult work in progress. A corollary of the solution of the above conjecture and of the main theorem \ref{Maintheorem} will be the following striking result which allow canonical construction of maximal totally real embeddings.

\begin{corollary}\label{CanonicalEmbedding}
$\left( \text{\tmstrong{Canonical maximal totally real embeddings}}\right)$. 
  
  Let $M$ be a real analytic manifold and let $\nabla$ be a torsion free
  complex covariant derivative operator acting on the real analytic sections
  of the complexified tangent bundle $\mathbbm{C}T_M$. Then there exists an
  open neighborhood $U \subseteq T_M$ of the image of the zero section with connected fibers
  and a fiberwise real-analytic section $S$ of $\pi^{\ast} \tmop{End} \left(
  T_M \right)$ over $U$ with fiberwise Taylor expansion at the origin
  \begin{eqnarray*}
    S_{\eta} \cdot \xi & = & \sum_{k \geqslant 2} S_k \left( \xi, \eta^k
    \right),
  \end{eqnarray*}
  for any $\eta \in U$ and any $\xi \in T_{M, \pi \left( \eta \right)}$, with
  $S_k \in C^{\infty} \left( M, T^{\ast}_M
  \otimes_{_{\mathbbm{R}}} S^k T^{\ast}_M \otimes_{_{\mathbbm{R}}}
  \mathbbm{C}T_M \right)$ for all $k \geqslant 2$, $\left( \right.$we denote
  by $\eta^k \assign \eta^{\times k} \in T^{\oplus k}_{M, \pi \left( \eta
  \right)} \left. \right)$ given by the recursive formula
  \begin{eqnarray*}
    S_k & \assign & \frac{i}{\left( k + 1 \right) ! k!} \tmop{Sym}_{2, \ldots,
    k + 1} \Theta^{\nabla}_k,
  \end{eqnarray*}
  with $\Theta^{\nabla}_2 \assign 2 R^{\nabla}$ and
  \begin{eqnarray*}
    \Theta^{\nabla}_k  \assign  - 2 i \left( i d_1^{\nabla^{}} \right)^{k -
    3} \left( \nabla R^{\nabla} \right)_2    +   \sum_{r = 4}^k r ! \sum_{p = 2}^{r - 2}
    \left( i d_1^{\nabla} \right)^{k - r} \left( p S_p \wedge_1 S_{r - p} \right),
  \end{eqnarray*}
  for all $k \geqslant 3$, such that $J_A$ with
  \begin{eqnarray*}
    \overline{A} & \assign & - i T\mathbbm{I}_{T_M} + H^{\nabla} - T S,
  \end{eqnarray*}
  is an $M$-totally real complex structure over $U$ which is real-analytic over
  $U$. 
\end{corollary}
Indeed in the statement of the main theorem \ref{Maintheorem} we set $ \sigma_k=0$ for all $k \geqslant 2$ and we identify the torsion free  complex covariant derivative $\nabla^{S_1}$ with the arbitrary torsion free complex covariant derivative $\nabla$ in the statements of corollary \ref{CanonicalEmbedding} and conjecture 1. Then the integrability equations in the statement of the main theorem \ref{Maintheorem} reduce to the identities $I_k=0$, for all $k \geqslant 4$ in the statement of the conjecture 1.

We notice that the notation $H^{\nabla}$ in the above definition of the
section $A$ is slightly abusive. We mean there by $H^{\nabla}$ the restriction
to $T_M$ of the horizontal map over $\mathbbm{C}T_M$ associated to
the complex covariant derivative operator $\nabla$. We must observe here the obvious
inclusion $T_{\mathbbm{C}T_M \mid T_M} \subset \mathbbm{C}T_{T_M}$.

The expression of $S_k$ above can an should be replaced with the explicit global
expression in the theorems 1.5 and 1.6 in \cite{Pal-Sal}. That expression shows that in the
case $\left( M, \nabla \right)$ with smooth regularity we can assume weaker
conditions on the growth of the covariant derivatives of the curvature and
still obtain convergence along the fibers. 

We obtain in this more general
setting a canonical $M$-totally real complex structure over $U$ which is
real-analytic along the fibers of $U$. This is sufficient for the
applications to micro local analytic analysis over manifolds. 

We wish to point
out that in the general setting of a torsion free complex covariant derivative
operator $\nabla$ acting on the sections of the complexified tangent bundle
$\mathbbm{C}T_M$ there are no geodesics associated to $\nabla$. (Cauchy's
existence theorem does not apply). 

Therefore there exist no geodesic flow
associated to $\nabla$ and the Jacobi field techniques of the authors \cite{Gu-St, Lem, Le-Sz1, Le-Sz2, Szo1, Szo2, Bie} do not apply. 

We
wish also to point out that in mathematics and in theoretical physics there
are many important natural complex differential operators that are defined via
complex connections as above.

The set up of corollary \ref{MainCoroll} is inspired by the articles \cite{Gu-St, Lem, Le-Sz1, Le-Sz2, Szo1, Szo2}. 
The genesis of their approach will be reminded in sub-section \ref{SymplecticAppr} and is needed for the proof of corollary \ref{MainCoroll}.

The long series of articles due to Guillemin-Stenzel \cite{Gu-St}, Lempert \cite{Lem}, Lempert-Sz\"oke \cite{Le-Sz1, Le-Sz2}\\
Sz\"oke \cite{Szo1, Szo2}, Burns \cite{Bu1, Bu2}, 
Burns-Halverscheid-Hind \cite{BHH} as well as Aslam-Burns-Irvine \cite{ABI}  are inspired by the fundamental work of Grauert \cite{Gra}.

Their existence results are needed in a crucial way in analytic
micro-local analysis, in pluri-potential theory (see the work by Zelditch \cite{Zel}) as well as in Hamiltonian
dynamics and in geometric quantization (see the work by Morao-Nunes \cite{Mo-Nu} and Hall-Kirwin \cite{Ha-Ki}).

\section{General connections over vector bundles}

\subsection{Basic definitions}

\begin{definition}
  Let $\left( E, \pi_E, M \right)$ be a smooth vector bundle over a manifold
  $M$. A connection form over $E$ is a section $\gamma \in C^{\infty} \left(
  E, T^{\ast}_E \otimes T_E \right)$ such that $d \pi_E \cdot \gamma = 0$ and
  $\gamma_{\mid \tmop{Ker} d \pi_E} =\mathbbm{I}_{\tmop{Ker} d \pi_E}$.
\end{definition}

We will denote by $\gamma_{\eta}$ the connection form $\gamma$ evaluated at
the point $\eta \in E$.

\begin{lemma}
  For any connection $\gamma \in C^{\infty} \left( E, T^{\ast}_E \otimes T_E \right)$ the 
  map 
 \begin{equation}\label{connproj}
d_{\eta}  \pi_{E \mid \tmop{Ker} \gamma_\eta} :
  \tmop{Ker} \gamma_{\eta} \longrightarrow T_{M, \pi_E \left( \eta \right)}\,,  
 \end{equation}
is an isomorphism for all $\eta \in E$.
\end{lemma}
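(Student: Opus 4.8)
The plan is to recognize that the two defining conditions of a connection form make $\gamma_{\eta}$ a linear projection of $T_{E, \eta}$ onto the vertical subspace $V_{\eta} \assign \tmop{Ker} d_{\eta} \pi_E$, and then to read off the claimed isomorphism by elementary linear algebra, pointwise in $\eta$. First I would note that the condition $d \pi_E \cdot \gamma = 0$ says that $\tmop{Im} \gamma_{\eta} \subseteq V_{\eta}$, while the condition $\gamma_{\mid \tmop{Ker} d \pi_E} = \mathbbm{I}_{\tmop{Ker} d \pi_E}$ says that $\gamma_{\eta}$ restricts to the identity on $V_{\eta}$. Composing these two facts gives, for any $\xi \in T_{E, \eta}$, that $\gamma_{\eta} \xi \in V_{\eta}$ and hence $\gamma_{\eta}^2 \xi = \gamma_{\eta} \xi$; thus $\gamma_{\eta}$ is idempotent with image exactly $V_{\eta}$, i.e. it is the projection onto $V_{\eta}$ along $\tmop{Ker} \gamma_{\eta}$. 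In particular one obtains the splitting $T_{E, \eta} = \tmop{Ker} \gamma_{\eta} \oplus V_{\eta}$.

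Next I would establish injectivity of the restriction of $d_{\eta} \pi_E$ to $\tmop{Ker} \gamma_{\eta}$. Since $\pi_E$ is a submersion, $d_{\eta} \pi_E : T_{E, \eta} \longrightarrow T_{M, \pi_E(\eta)}$ is surjective with kernel precisely $V_{\eta}$. So if $\xi \in \tmop{Ker} \gamma_{\eta}$ satisfies $d_{\eta} \pi_E \, \xi = 0$, then $\xi \in V_{\eta}$, on which $\gamma_{\eta}$ is the identity; hence $\xi = \gamma_{\eta} \xi = 0$, because $\xi \in \tmop{Ker} \gamma_{\eta}$. This uses nothing beyond the two conditions and the submersion property.

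For surjectivity I would argue constructively. Given $w \in T_{M, \pi_E(\eta)}$, choose any $\xi \in T_{E, \eta}$ with $d_{\eta} \pi_E \, \xi = w$, and set $\xi' \assign \xi - \gamma_{\eta} \xi$. Idempotency gives $\gamma_{\eta} \xi' = \gamma_{\eta} \xi - \gamma_{\eta}^2 \xi = 0$, so $\xi' \in \tmop{Ker} \gamma_{\eta}$, while the first condition gives $d_{\eta} \pi_E \, \xi' = d_{\eta} \pi_E \, \xi - d_{\eta} \pi_E \, \gamma_{\eta} \xi = w$. Thus $\xi'$ is the required preimage in $\tmop{Ker} \gamma_{\eta}$. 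Equivalently one may simply count dimensions: $\dim T_{E, \eta} = \dim M + \tmop{rk} E$ and $\dim V_{\eta} = \tmop{rk} E$, so the complement $\tmop{Ker} \gamma_{\eta}$ has dimension $\dim M = \dim T_{M, \pi_E(\eta)}$, whence injectivity already forces bijectivity.

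There is no genuine obstacle in this statement: the entire content is the observation that a connection form is fiberwise a vertical projection, and once $\gamma_{\eta}$ is identified as the idempotent with image $V_{\eta} = \tmop{Ker} d_{\eta} \pi_E$, injectivity and surjectivity of $d_{\eta} \pi_E$ on the complementary subspace $\tmop{Ker} \gamma_{\eta}$ are immediate. The only point deserving a word is that all of this is linear-algebraic at each fixed $\eta$, so the pointwise isomorphism holds for every $\eta \in E$ exactly as claimed, with smooth dependence on $\eta$ inherited from the smoothness of $\gamma$.
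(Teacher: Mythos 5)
Your proposal is correct and follows essentially the same route as the paper: your idempotency observation $\gamma_{\eta}^2 = \gamma_{\eta}$ is the paper's identity $\gamma \cdot \left( \mathbbm{I}_{T_E} - \gamma \right) = 0$, your surjectivity witness $\xi' = \left( \mathbbm{I}_{T_E} - \gamma_{\eta} \right) \xi$ is exactly the paper's use of $\tmop{Im} \left( \mathbbm{I}_{T_E} - \gamma \right) = \tmop{Ker} \gamma$ together with $d \pi_E \cdot \left( \mathbbm{I}_{T_E} - \gamma \right) = d \pi_E$, and your injectivity argument coincides with the paper's. No gaps.
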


\begin{proof}
  The assumption $\gamma_{\mid \tmop{Ker} d \pi_E} =\mathbbm{I}_{\tmop{Ker} d
  \pi_E}$ implies $\gamma \cdot \left( \mathbbm{I}_{T_E} - \gamma \right) =
  0$. Thus $\tmop{Im} \left( \mathbbm{I}_{T_E} - \gamma \right) \subseteq
  \tmop{Ker} \gamma$. Then $\tmop{Im} \left( \mathbbm{I}_{T_E} - \gamma
  \right) = \tmop{Ker} \gamma$. Indeed if $\gamma \left( u \right) = 0$ then
  $u = \left( \mathbbm{I}_{T_E} - \gamma \right) u$. On the other hand we notice that the
  condition $d \pi_E \cdot \gamma = 0$ implies $d \pi_E \cdot \left(
  \mathbbm{I}_{T_E} - \gamma \right) = d \pi_E$ and thus
  \begin{equation}
    \label{projIdiot} d_{\eta} \pi_{E \mid \tmop{Ker} \gamma_{\eta}} \cdot
    \left( \mathbbm{I}_{T_E} - \gamma \right) = d \pi_E \,.
  \end{equation}
  This equality shows that the map (\ref{connproj}) is surjective. The injectivity follows from the fact that if
  $u, v \in \tmop{Ker} \gamma_{\eta}$ and $d_{\eta} \pi_E \left( u - v \right)
  = 0$ then $u - v = \gamma \left( u - v \right) = 0$ by the assumption
  $\gamma_{\mid \tmop{Ker} d \pi_E} =\mathbbm{I}_{\tmop{Ker} d \pi_E}$.
\end{proof}

We denote by $H_{\eta}^{\gamma} \assign (d_{\eta} \pi_{E \mid \tmop{Ker}
\gamma_{\eta}})^{- 1}$ the horizontal map. We deduce the existence of a
section
\begin{eqnarray*}
  H^{\gamma}  =  C^{\infty} \left( E \nocomma, \pi^{\ast}_E T^{\ast}_M
  \otimes T_E \right),
\end{eqnarray*}
such that $d \pi_E \cdot H^{\gamma} =\mathbbm{I}_{\pi^{\ast}_E T_M}$. (We
notice that $d \pi_E \in C^{\infty} \left( E \nocomma, T^{\ast}_E \otimes
\pi^{\ast}_E T_M \right)$). Composing both sides of (\ref{projIdiot}) with
$H_{\eta}^{\gamma}$ we infer
\begin{eqnarray*}
  \gamma = \mathbbm{I}_{T_E} - H^{\gamma} \cdot d \pi_E\,,
\end{eqnarray*}
and the smooth vector bundle decomposition $T_E = \tmop{Ker} d \pi_E \oplus
\tmop{Ker} \gamma$.

The data of a connection form $\gamma$ is equivalent with the data of a
horizontal form $H^{\gamma}$. The connection form is called linear if the
horizontal form $H^{\gamma}$ satisfies
\begin{eqnarray*}
  d_{\left( \eta_1, \eta_2 \right)}  \left( s m_{_E} \right) \cdot
  (H^{\gamma}_{\eta_1} \oplus H^{\gamma}_{\eta_2}) & = & H^{\gamma}_{\eta_1 +
  \eta_2}\,,\\
  &  & \\
  H^{\gamma}_{\lambda \eta} & = & d_{\eta} \left( \lambda \mathbbm{I}_E
  \right) \cdot H^{\gamma}_{\eta}\,,
\end{eqnarray*}
where $s m_{_E} : E \oplus E \longrightarrow E$ is the sum bundle map where
$\eta_1, \eta_2, \eta \in E$ with $\pi_E \left( \eta_1 \right) = \pi_E \left(
\eta_2 \right)$, and $\lambda$ is a scalar.

\begin{definition}
  The {\tmstrong{curvature form}} $\theta^{\gamma} \in C^{\infty} \left( E
  \nocomma, \Lambda^2 T^{\ast}_E \otimes T_E \right)$ of a connection form
  $\gamma$ is defined as
  \begin{eqnarray*}
    \theta^{\gamma} (\xi_1, \xi_2)  \assign  - \gamma [ \left(
    \mathbbm{I}_{T_E} - \gamma \right) \xi_1\,, \left( \mathbbm{I}_{T_E} -
    \gamma \right) \xi_2] \,,
  \end{eqnarray*}
  for all $\xi_1, \xi_2 \in C^{\infty} \left( E \nocomma, T_E \right)$.
\end{definition}
The definition is tensorial. Indeed if $f \in C^{\infty} \left( E, \mathbbm{R}
\right)$ then
\begin{eqnarray*}
  [ \left( \mathbbm{I}_{T_E} - \gamma \right) f \xi_1\,, \left(
  \mathbbm{I}_{T_E} - \gamma \right) \xi_2] 
 & = &
 f [ \left( \mathbbm{I}_{T_E}
  - \gamma \right) \xi_1\,, \left( \mathbbm{I}_{T_E} - \gamma \right) \xi_2] 
\\
\\
  &-& [
  \left( \mathbbm{I}_{T_E} - \gamma \right) \xi_2 \,.\, f] \left(
  \mathbbm{I}_{T_E} - \gamma \right) \xi_1 \,.
\end{eqnarray*}
The conclusion follows from the fact that $\gamma \cdot \left(
\mathbbm{I}_{T_E} - \gamma \right) = 0$. We notice that
\begin{eqnarray*}
  \theta^{\gamma} \in  C^{\infty} \left( E \nocomma, \Lambda^2 \left(
  \tmop{Ker} \gamma \right)^{\ast} \otimes \tmop{Ker} d \pi \right),
\end{eqnarray*}
and such element is uniquely determined by the {\tmstrong{curvature field}}
$\Theta^{\gamma}$ defined as
\begin{eqnarray*}
  \Theta^{\gamma} (\xi_1, \xi_2) \left( \eta \right) \assign  T_{\eta}^{-
  1} \theta_{\eta}^{\gamma} (H^{\gamma}_{\eta} \xi_1, H^{\gamma}_{\eta}
  \xi_2)\,,
\end{eqnarray*}
for all $\xi_1, \xi_2 \in T_{M, \pi_E \left( \eta \right)}$. In the case
$\gamma$ is linear then 
$$
\Theta^{\gamma} \in C^{\infty} \left( M, \Lambda^2
T^{\ast}_M \otimes \tmop{End} \left( E \right) \right),
$$ 
is called the
{\tmstrong{curvature operator}}. The terminology is consistent with the fact
that if we denote by $\nabla^{\gamma}$ the covariant derivative associated to
$\gamma$ then the identity $R^{\nabla^{\gamma}} = \Theta^{\gamma}$ holds,
thanks to lemma \ref{TorsCurv} in the appendix.
\\
\\
{\bf{Parallel transport}}.
Given any horizontal form $\alpha \in C^{\infty} \left( E, \pi^{\ast}
T^{\ast}_M \otimes_{_{\mathbbm{R}}} T_E \right)$ over a vector bundle $E$, the
parallel transport with respect to $\alpha$ is defined as follows. We consider
a smooth curve $c : \left( - \varepsilon, \varepsilon \right) \longrightarrow
M$ and the section $\sigma \in C^1 \left( \left( - \varepsilon, \varepsilon
\right), c^{\ast} E \right)$ which satisfies the equation
\begin{eqnarray*}
  \dot{\sigma} = \left( \alpha \circ \sigma \right) \cdot \dot{c}\,,
\end{eqnarray*}
over $\left( - \varepsilon, \varepsilon \right)$ with $\sigma \left( 0 \right)
= \eta \in E_{c \left( 0 \right)}$. We define the parallel transport map
$\tau^{\alpha}_{c, t} : E_{c \left( 0 \right)} \longrightarrow E_{c \left( t
\right)}$, $t \in \left( - \varepsilon, \varepsilon \right)$ along $c$ with
respect to $\alpha$ as $\tau^{\alpha}_{c, t} \left( \eta \right) = \sigma
\left( t \right)$.

We consider now a $C^1$-vector field $\xi$ over $M$ and let $\varphi_{\xi,
t}$ be the associated $1$-parameter sub-group of transformations of $M$. Let
$\Phi^{\alpha}_{\xi, t} : E \longrightarrow E$ be the parallel transport map
along the flow lines of $\varphi_{\xi, t}$. In equivalent terms the map
$\Phi^{\alpha}_{\xi, t}$ is determined by the ODE
\begin{eqnarray*}
  \dot{\Phi}^{\alpha}_{\xi, t} & = & \left( \alpha \circ \Phi^{\alpha}_{\xi,
  t} \right) \cdot \left( \xi \circ \varphi_{\xi, t} \circ \pi_E \right),
\end{eqnarray*}
with initial condition $\Phi^{\alpha}_{\xi, 0} \equiv \mathbbm{I}_E$. We
observe that by definition of parallel transport, the map $\Phi^{\alpha}_{\xi,
t}$ satisfies $\pi_E \circ \Phi^{\alpha}_{\xi, t} = \varphi_{\xi, t} \circ
\pi_E$. This follows also from the equalities
\begin{eqnarray*}
  \left( d \pi_E \circ \Phi^{\alpha}_{\xi, t} \right) \cdot
  \dot{\Phi}^{\alpha}_{\xi, t} & = & \xi \circ \varphi_{\xi, t} \circ \pi_E\\
  &  & \\
  & = & \dot{\varphi}_{\xi, t} \circ \pi_E .
\end{eqnarray*}
Moreover the vector field $\Xi^{\alpha} \assign \alpha \cdot \left( \xi \circ
\pi_E \right)$ over $E$ satisfies $\dot{\Phi}^{\alpha}_{\xi, t} = \Xi^{\alpha}
\circ \Phi^{\alpha}_{\xi, t}$. Indeed
\begin{eqnarray*}
  \Xi^{\alpha} \circ \Phi^{\alpha}_{\xi, t} & = & \left( \alpha \circ
  \Phi^{\alpha}_{\xi, t} \right) \cdot \left( \xi \circ \pi_E \circ
  \Phi^{\alpha}_{\xi, t} \right)\\
  &  & \\
  & = & \left( \alpha \circ \Phi^{\alpha}_{\xi, t} \right) \cdot \left( \xi
  \circ \varphi_{\xi, t} \circ \pi_E \right) .
\end{eqnarray*}
We deduce that $t \longmapsto \Phi^{\alpha}_{\xi, t}$ is also a $1$-parameter
sub-group of transformations of $E$.

\subsection{The geometric meaning of the curvature field}

The following result provides a clear geometric meaning of the curvature
field.

\begin{lemma}
  Let $\left( E, \pi_E, M \right)$ be a smooth vector bundle over a manifold
  $M$ and consider a horizontal form $\alpha \in C^{\infty} \left( E,
  \pi^{\ast} T^{\ast}_M \otimes_{_{\mathbbm{R}}} T_E \right)$ over bundle $E$.
  Then the curvature field $\Theta^{\alpha}$ associated to $\alpha$ satisfies
  \begin{eqnarray*}
    \Theta^{\alpha} (\xi_1, \xi_2) \left( \eta \right) & = & T_{\eta}^{- 1}
    \frac{\partial^2}{\partial t \partial s} _{\mid_{t = s = 0}}  \left(
    \Phi^{\alpha}_{\xi_1, - s} \circ \Phi^{\alpha}_{\xi_2, - t} \circ
    \Phi^{\alpha}_{\xi_1, s} \circ \Phi^{\alpha}_{\xi_2, t} \left( \eta
    \right)_{_{_{_{}}}} \right) .
  \end{eqnarray*}
  for any $\xi_1, \xi_2 \in C^{\infty} \left( M, T_M \right)$ such that
  $[\xi_1, \xi_2] \equiv 0$ and for any $\eta \in E$.
\end{lemma}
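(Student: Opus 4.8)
The plan is to recognize the iterated composition in the statement as the commutator of the flows generated by the two horizontal lifts of $\xi_1$ and $\xi_2$, and to match the resulting Lie bracket against the definition of the curvature form. Recall from the discussion of parallel transport that each map $\Phi^{\alpha}_{\xi_j,\bullet}$ is the one-parameter group of the vector field $\Xi_j^{\alpha} \assign \alpha\cdot(\xi_j\circ\pi_E)$ on $E$, that is $\dot{\Phi}^{\alpha}_{\xi_j,t} = \Xi_j^{\alpha}\circ\Phi^{\alpha}_{\xi_j,t}$, and that $\pi_E\circ\Phi^{\alpha}_{\xi_j,t} = \varphi_{\xi_j,t}\circ\pi_E$. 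Differentiating the last identity shows $d\pi_E\cdot\Xi_j^{\alpha} = \xi_j\circ\pi_E$, so $\Xi_j^{\alpha}$ is $\pi_E$-related to $\xi_j$; moreover $\Xi_j^{\alpha}$ is horizontal, $\gamma\cdot\Xi_j^{\alpha}=0$, for the connection form $\gamma \assign \mathbbm{I}_{T_E} - \alpha\cdot d\pi_E$ associated with $\alpha$.

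First I would set $F(t,s) \assign \Phi^{\alpha}_{\xi_1,-s}\circ\Phi^{\alpha}_{\xi_2,-t}\circ\Phi^{\alpha}_{\xi_1,s}\circ\Phi^{\alpha}_{\xi_2,t}(\eta)$ and observe that $F(0,s) = F(t,0) = \eta$, since each flow is cancelled by its inverse. Hence the Taylor expansion of $F$ at the origin carries no pure $s$- or $t$-terms, the mixed derivative $\frac{\partial^2}{\partial t \partial s}_{\mid_{t=s=0}} F$ is a well-defined tangent vector at $\eta$ independent of the trivialization, and a second-order expansion in a local trivialization of $E$ gives $\frac{\partial^2}{\partial t \partial s}_{\mid_{t=s=0}} F = -[\Xi_1^{\alpha},\Xi_2^{\alpha}](\eta)$. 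This is the only genuinely computational step; the delicate point is the sign, which comes out as $X'Y - Y'X = -[X,Y]$ for $X=\Xi_1^{\alpha}$, $Y=\Xi_2^{\alpha}$ (one may check it on the model fields $X=\partial_x$, $Y=x\,\partial_y$, for which $F$ is computed explicitly).

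Next I would invoke the hypothesis $[\xi_1,\xi_2]\equiv 0$. As $\Xi_j^{\alpha}$ is $\pi_E$-related to $\xi_j$, the bracket $[\Xi_1^{\alpha},\Xi_2^{\alpha}]$ is $\pi_E$-related to $[\xi_1,\xi_2]=0$, whence $d\pi_E\cdot[\Xi_1^{\alpha},\Xi_2^{\alpha}]=0$: the bracket is vertical, i.e. it lies in $\tmop{Ker} d\pi_E$. This verticality is exactly what makes $\frac{\partial^2}{\partial t \partial s}_{\mid_{t=s=0}} F$ lie in $\tmop{Ker} d_{\eta}\pi_E = T_{E_p,\eta}$, so that $T_{\eta}^{-1}$ may legitimately be applied; equivalently it reflects that $\pi_E\circ F$ is constant because the flows $\varphi_{\xi_1,\bullet}$ and $\varphi_{\xi_2,\bullet}$ commute. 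Since $\gamma$ restricts to the identity on $\tmop{Ker} d\pi_E$ and $(\mathbbm{I}_{T_E}-\gamma)\Xi_j^{\alpha}=\Xi_j^{\alpha}$, the definition of the curvature form gives $\theta^{\gamma}(\Xi_1^{\alpha},\Xi_2^{\alpha}) = -\gamma[\Xi_1^{\alpha},\Xi_2^{\alpha}] = -[\Xi_1^{\alpha},\Xi_2^{\alpha}]$.

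Finally I would assemble the pieces. By definition of the curvature field, with $H^{\gamma}=\alpha$ and $\Xi_j^{\alpha}(\eta)=\alpha_{\eta}\xi_j=H^{\gamma}_{\eta}\xi_j$, we have $\Theta^{\alpha}(\xi_1,\xi_2)(\eta) = T_{\eta}^{-1}\theta^{\gamma}_{\eta}(\Xi_1^{\alpha}(\eta),\Xi_2^{\alpha}(\eta)) = -T_{\eta}^{-1}[\Xi_1^{\alpha},\Xi_2^{\alpha}](\eta)$, and the second-order expansion then yields $\Theta^{\alpha}(\xi_1,\xi_2)(\eta) = T_{\eta}^{-1}\frac{\partial^2}{\partial t \partial s}_{\mid_{t=s=0}} F$, which is the asserted formula. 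The main obstacle is thus confined to the elementary flow-commutator Taylor expansion and its sign; everything else is a matter of matching the definitions of $\theta^{\gamma}$ and $\Theta^{\alpha}$ and using $\pi_E$-relatedness to pass from the Lie bracket on $E$ to the vertical curvature term.
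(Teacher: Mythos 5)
Your proposal is correct and follows essentially the same route as the paper: both identify the iterated composition as the flow commutator of the horizontal lifts $\Xi_j^{\alpha}=\alpha\cdot(\xi_j\circ\pi_E)$, use $[\xi_1,\xi_2]\equiv 0$ to see that the resulting bracket (equivalently, the curve itself) is vertical so that $\gamma$ acts as the identity on it and $T_{\eta}^{-1}$ applies, and then match the definitions of $\theta^{\gamma}$ and $\Theta^{\alpha}$. The only cosmetic difference is that you obtain the mixed-derivative/bracket identity $\frac{\partial^2}{\partial t\,\partial s}\big|_{t=s=0}F=-[\Xi_1^{\alpha},\Xi_2^{\alpha}](\eta)$ by a direct second-order Taylor expansion in a trivialization (with the correct sign, which you verify on a model), whereas the paper peels off the outermost flow via the observation $\frac{d}{ds}\big|_{s=0}\Psi_s(c_s)=\dot{\Psi}_0(c_0)+\dot{c}_0$ — the same standard fact derived two equivalent ways.
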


\begin{proof}
  We observe first that if we have a family of transformations $\left( \Psi_s
  \right)_s$ over a manifold with $\Psi_0 = \tmop{id}$ and a curve
  {\tmem{$c$}} then
  \begin{eqnarray*}
    \frac{d}{d s} _{\mid_{s = 0}} \Psi_s \left( c_s \right) & = & \dot{\Psi}_0
    \left( c_0 \right) \noplus + d \Psi_0 \left( \dot{c}_0 \right)\\
    &  & \\
    & = & \dot{\Psi}_0 \left( c_0 \right) \noplus + \dot{c}_0 .
  \end{eqnarray*}
  Applying the last equality to $\Psi_s = \varphi_{\xi_2, - s}$ and $c_s
  \assign \varphi_{\xi_1, - t} \circ \varphi_{\xi_2, s} \circ \varphi_{\xi_1,
  t}$, we infer
  \begin{eqnarray*}
    \frac{d}{d s}_{\mid_{s = 0}}  \left( \varphi_{\xi_2, - s} \circ
    \varphi_{\xi_1, - t} \circ \varphi_{\xi_2, s} \circ \varphi_{\xi_1, t}
    \right) & = & - \xi_2 \noplus + \frac{d}{d s} _{\mid_{s = 0}}  \left(
    \varphi_{\xi_1, - t} \circ \varphi_{\xi_2, s} \circ \varphi_{\xi_1, t}
    \right),
  \end{eqnarray*}
  and thus
  \begin{eqnarray*}
    {}[\xi_1, \xi_2] & = & \frac{d}{d t} _{\mid_{t = 0}}  \frac{d}{d s}
    _{\mid_{s = 0}}  \left( \varphi_{\xi_1, - t} \circ \varphi_{\xi_2, s}
    \circ \varphi_{\xi_1, t} \right)\\
    &  & \\
    & = & \frac{d}{d t} _{\mid_{t = 0}}  \frac{d}{d s} _{\mid_{s = 0}} 
    \left( \varphi_{\xi_2, - s} \circ \varphi_{\xi_1, - t} \circ
    \varphi_{\xi_2, s} \circ \varphi_{\xi_1, t} \right) .
  \end{eqnarray*}
  In a similar way
  \begin{eqnarray*}
    {}[\Xi^{\alpha}_2, \Xi^{\alpha}_1] & = & \frac{d}{d t} _{\mid_{t = 0}} 
    \frac{d}{d s} _{\mid_{s = 0}}  \left( \Phi^{\alpha}_{\xi_1, - s} \circ
    \Phi^{\alpha}_{\xi_2, - t} \circ \Phi^{\alpha}_{\xi_1, s} \circ
    \Phi^{\alpha}_{\xi_2, t} \right),
  \end{eqnarray*}
  with $\Xi_j^{\alpha} \assign \alpha \cdot \left( \xi_j \circ \pi_E \right)$,
  $j = 1, 2$. Let $\eta \in E_p$ and observe that 
  $$
  \Phi^{\alpha}_{\xi_1, - s}
  \circ \Phi^{\alpha}_{\xi_2, - t} \circ \Phi^{\alpha}_{\xi_1, s} \circ
  \Phi^{\alpha}_{\xi_2, t} (\eta) \in E_p\,,
  $$ 
  for all parameters $t, s$, since
  $\varphi_{\xi_1, - s} \circ \varphi_{\xi_2, - t} \circ \varphi_{\xi_1, s}
  \circ \varphi_{\xi_2, t} \left( p \right) = p$ thanks to the assumption
  $[\xi_1, \xi_2] \equiv 0$.
We conclude the required
  geometric identity
\end{proof}

\subsection{Comparison of the curvature fields of two connections}

We consider now two connection forms $\gamma_j$, $j = 1, 2$ over $E$ and let
$\alpha_j \assign H^{\gamma_j}$ be the corresponding horizontal forms. The
fact that $d \pi_E \left( \alpha_1 - \alpha_2 \right) = 0$ implies that there
exist a section
\begin{eqnarray*}
  B  \assign  T^{- 1} \left( \alpha_1 - \alpha_2 \right) \in C^{\infty}
  \left( E, \pi_E^{\ast} \left( T^{\ast}_M \otimes E \right)_{_{}}
  \right),
\end{eqnarray*}
which satisfies
\begin{eqnarray*}
  \gamma_1 = \gamma_2 - T B \cdot d \pi_E\, .
\end{eqnarray*}
We want to compare the curvature fields $\Theta_j \assign \Theta^{\gamma_j}$.
We will denote by abuse of notation $\alpha_j \xi \equiv \alpha_j \cdot
\left( \xi \circ \pi_E \right)$ and $B \xi \equiv B \cdot \left( \xi \circ
\pi_E \right)$ for any $\xi \in C^{\infty} \left( M, T_M \right)$.

\begin{lemma}
  In the above set up, the identity
  \begin{eqnarray}
    \Theta_1 \left( \xi_1, \xi_2 \right) & = & \left( \Theta_2 - B \neg D B
    \right) \left( \xi_1, \xi_2 \right)\nonumber\\\nonumber
    &  & \\
    & - & T^{- 1} \left( \left[ \alpha_2 \xi_1, T B_{_{_{_{}}}} \xi_2 \right]
    - \left[ \alpha_2  \xi_2 , T B \xi_1 \right] \right) \noplus +
    B \left[ \xi_1, \xi_2 \right],\label{ComparCurv1,2}
  \end{eqnarray}
  holds for any $\xi_1, \xi_2 \in C^{\infty} \left( M, T_M \right)$.
\end{lemma}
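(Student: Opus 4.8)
The plan is to compute $\Theta_1$ directly from the definition of the curvature field and to reduce everything to the corresponding quantities for $\gamma_2$ by inserting the relations $\alpha_1 = \alpha_2 + TB$ and $\gamma_1 = \gamma_2 - TB\cdot d\pi_E$. The starting observation is that a horizontal vector $\alpha_j\xi$ satisfies $(\mathbbm{I}_{T_E} - \gamma_j)\,\alpha_j\xi = \alpha_j\,d\pi_E\,\alpha_j\xi = \alpha_j\xi$, using $\gamma_j = \mathbbm{I}_{T_E} - \alpha_j\,d\pi_E$ and $d\pi_E\cdot\alpha_j =\mathbbm{I}$. Hence, by the definitions of $\theta^{\gamma_j}$ and of $\Theta^{\gamma_j}$, one has at any point $\eta$
\[
  \Theta_j(\xi_1,\xi_2)(\eta) = -\,T_\eta^{-1}\,\gamma_{j,\eta}\,[\alpha_j\xi_1,\alpha_j\xi_2]\,,
\]
where $\alpha_j\xi_i \equiv \alpha_j\cdot(\xi_i\circ\pi_E)$ are the vector fields fixed above. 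Since $\theta^{\gamma_j}$ is tensorial, these particular extensions may be used without loss.

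First I would substitute $\alpha_1 = \alpha_2 + TB$ and expand the bracket by bilinearity into the four terms $[\alpha_2\xi_1,\alpha_2\xi_2]$, $[\alpha_2\xi_1, TB\xi_2]$, $[TB\xi_1,\alpha_2\xi_2]$ and $[TB\xi_1, TB\xi_2]$, while simultaneously rewriting $-T^{-1}\gamma_1 = -T^{-1}\gamma_2 + B\,d\pi_E$. The organizing principle is $\pi_E$-relatedness: each $\alpha_2\xi_i$ is $\pi_E$-related to $\xi_i$ (because $d\pi_E\cdot\alpha_2 =\mathbbm{I}$), whereas each $TB\xi_i$ is vertical, hence $\pi_E$-related to $0$. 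Consequently $[\alpha_2\xi_1,\alpha_2\xi_2]$ projects under $d\pi_E$ to $[\xi_1,\xi_2]$, so its $\gamma_2$-part reproduces $\Theta_2(\xi_1,\xi_2)$ and its $B\,d\pi_E$-part produces exactly $B[\xi_1,\xi_2]$. The two mixed brackets are $\pi_E$-related to $0$, hence vertical, so $\gamma_2$ acts on them as the identity while the $B\,d\pi_E$-part annihilates them; using $[TB\xi_1,\alpha_2\xi_2] = -[\alpha_2\xi_2, TB\xi_1]$, they contribute precisely $-T^{-1}\bigl([\alpha_2\xi_1, TB\xi_2] - [\alpha_2\xi_2, TB\xi_1]\bigr)$.

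The remaining term $[TB\xi_1, TB\xi_2]$ is the main obstacle, and it is here that $-\,B\neg DB$ must emerge. Both fields are vertical, so their bracket is again vertical and may be computed inside a single fibre $E_p$, where $T_\eta$ is the canonical translation identification of $E_p$ with $T_{E_p,\eta}$. Writing $TB\xi_i$ fibrewise as $\eta\mapsto T_\eta(B_\eta\xi_i)$ and applying the elementary formula for the bracket of two vector fields on a vector space under this identification, I expect to obtain
\[
  T_\eta^{-1}[TB\xi_1, TB\xi_2]_\eta = D_\eta B(B_\eta\xi_1)(\xi_2) - D_\eta B(B_\eta\xi_2)(\xi_1)\,,
\]
which, by the definition $D_\eta B(v)(\xi)=\frac{d}{dt}_{\mid_{t=0}}B_{\eta+tv}(\xi)$ and by the definition of the contraction $B\neg DB = \tmop{Alt}_2(DB\circ B)$, is exactly $(B\neg DB)(\xi_1,\xi_2)$. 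Since this bracket is vertical it contributes $-T^{-1}\gamma_1[TB\xi_1, TB\xi_2] = -(B\neg DB)(\xi_1,\xi_2)$. The care needed is precisely to match the fibre-directional derivatives appearing in the bracket with $D_\eta B$ and to track the alternation in $\tmop{Alt}_2$ so that the signs agree. Collecting the four contributions yields
\[
  \Theta_1(\xi_1,\xi_2) = (\Theta_2 - B\neg DB)(\xi_1,\xi_2) - T^{-1}\bigl([\alpha_2\xi_1, TB\xi_2] - [\alpha_2\xi_2, TB\xi_1]\bigr) + B[\xi_1,\xi_2]\,,
\]
which is the claimed identity (\ref{ComparCurv1,2}).
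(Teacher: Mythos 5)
Your proposal is correct and follows essentially the same route as the paper's proof: the same starting identity $T\Theta_j(\xi_1,\xi_2)=-\gamma_j[\alpha_j\xi_1,\alpha_j\xi_2]$, the same decomposition $\alpha_1=\alpha_2+TB$ and $\gamma_1=\gamma_2-TB\cdot d\pi_E$, verticality of the mixed brackets, and the key fiberwise identity $[TB\xi_1,TB\xi_2]=T\left(B\neg DB\right)(\xi_1,\xi_2)$ obtained through the canonical translation identification of each fibre with its tangent spaces. The only cosmetic difference is that you establish verticality of the mixed brackets via $\pi_E$-relatedness, whereas the paper argues through the flow expression of the Lie bracket with fibre-preserving one-parameter groups; both rest on the same fact.
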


\begin{proof}
  We notice first the equalities
  \begin{eqnarray*}
    T \Theta_1 \left( \xi_1, \xi_2 \right) & = & \theta^{\gamma_1} (\alpha_1
    \xi_1, \alpha_1 \xi_2)\\
    &  & \\
    & = & - \gamma_1 \left[ \alpha_1 \xi_1, \alpha_{1_{_{_{}}}} \xi_2
    \right]\\
    &  & \\
    & = & - \gamma_2 \left[ \alpha_1 \xi_1, \alpha_{1_{_{_{}}}} \xi_2 \right]
    + T B \cdot d \pi_E  \left[ \alpha_1 \xi_1, \alpha_{1_{_{_{}}}} \xi_2
    \right]\\
    &  & \\
    & = & T \Theta_2 \left( \xi_1, \xi_2 \right)\\
    &  & \\
    & - & \gamma_2 \left( \left[ \alpha_2 \xi_1, T B \xi_2
    \right] + \left[ T B \xi_1, \alpha_2 \xi_2 \right] + \left[ T
    B \xi_1, T B \xi_2 \right] \right)\\
    &  & \\
    & + & T B \left[ \xi_1, \xi_2 \right] .
  \end{eqnarray*}
  In the last line we use the well known identity $d \pi_E [\alpha_1 \xi_1,
  \alpha_1 \xi_2] = [\xi_1, \xi_2] \circ \pi_E$, which follows from the fact
  that $d \pi_E \alpha_1 \xi_j = \xi_j \circ \pi_E$, $j = 1, 2$. Let now
  $\Phi_{T B \xi_2, t}$ be the $1$-parameter sub-group of transformations of
  $E$ associated to the vertical vector field $T B \xi_2$. It satisfies $\pi_E
  \circ \Phi_{T B \xi_2, t} = \pi_E$. Using the standard expression of the Lie
  bracket
  \begin{eqnarray*}
    {}[\alpha_2 \xi_1, T B \xi_2] & = & \frac{d}{d t} _{\mid_{t = 0}} 
    \frac{d}{d s} _{\mid_{s = 0}}  \left( \Phi^{\alpha_2}_{\xi_1, - t} \circ
    \Phi_{T B \xi_2, s} \circ \Phi^{\alpha_2}_{\xi_1, t} \right),
  \end{eqnarray*}
  we deduce that this vector field is vertical. In the same way $\left[ T B
  \xi_1, \alpha_2  \xi_2 \right]$ is vertical. It is obvious that
  the vector field $\left[ T B \xi_1, T B_{_{_{_{}}}} \xi_2 \right]$ is also
  vertical. We infer the identity
  \begin{eqnarray*}
    T \Theta_1 \left( \xi_1, \xi_2 \right) & = & T \Theta_2 \left( \xi_1,
    \xi_2 \right) - \left[ T B \xi_1, T B_{_{_{_{}}}} \xi_2 \right]\\
    &  & \\
    & - & \left[ \alpha_2 \xi_1, T B_{_{_{_{}}}} \xi_2 \right] - \left[ T B
    \xi_1, \alpha_2 \xi_2 \right] \noplus + T B \left[ \xi_1,
    \xi_2 \right] .
  \end{eqnarray*}
  The required formula (\ref{ComparCurv1,2}) follows from the identity
  \begin{equation}
    \label{BracketTB}  \left[ T B \xi_1, T B \xi_2 \right] \;=\; T \left( B \neg D
    B \right) \left( \xi_1, \xi_2 \right),
  \end{equation}
  that we show now. We first remind the reader that for any vector space $V$, the
  canonical translation operator $T : C^{\infty} \left( V, V \right)
  \longrightarrow C^{\infty} \left( V, T_V \right)$ defined as $\left( T \xi
  \right) \left( v \right) \assign T_v \xi_v$ is a Lie algebra isomorphism,
  where the Lie algebra structure over $C^{\infty} \left( V, V \right)$ is
  defined by $[\xi, \eta]_v \assign D_v \eta \cdot \xi_v - D_v \xi \cdot
  \eta_v$. Indeed if we define the action of $C^{\infty} \left( V, V \right)$
  over $C^{\infty} \left( V, \mathbbm{R} \right)$ as
  \begin{eqnarray*}
    \left( \xi .f \right) \left( v \right) & \assign & D_v f \cdot \xi_v\\
    &  & \\
    & = & \frac{d}{d t} _{\mid_{t = 0}} f \left( v + t \xi_v \right)\\
    &  & \\
    & = & \left[ \left( T \xi \right) .f \right] \left( v \right),
  \end{eqnarray*}
  then
  \begin{eqnarray*}
    \left( \xi . \eta .f \right) \left( v \right) & = & \frac{d}{d t}
    _{\mid_{t = 0}}  \left( \eta .f \right) \left( v + t \xi_v \right)\\
    &  & \\
    & = & \frac{d}{d t} _{\mid_{t = 0}}  \left( D_{v + t \xi_v} f \cdot
    \eta_{v + t \xi_{v_{_{}}}} \right)\\
    &  & \\
    & = & D_v^2 f \left( \xi_v, \eta_v \right) + D_v f \cdot D_v \eta \cdot
    \xi_v .
  \end{eqnarray*}
  The fact that the bilinear form $D_v^2 f$ is symmetric implies
  \begin{eqnarray*}
    \xi . \eta .f - \eta . \xi .f & = & [\xi, \eta] .f \nosymbol .
  \end{eqnarray*}
  On the other hand by definition
  \begin{eqnarray*}
    T \xi .T \eta .f - T \eta . T \xi .f & = & \xi . \eta .f - \eta . \xi
    .f,\\
    &  & \\
    {}[\xi, \eta] .f \nosymbol & = & T [\xi, \eta] .f \nosymbol .
  \end{eqnarray*}
  We conclude the required identity $[T \xi, T \eta] = T [\xi, \eta]$. We
  apply this remark to our set-up. For any point $p \in M$, we denote by $B
  \xi \left( p \right) \in C^{\infty} \left( E_p, E_p \right)$ the map $\eta
  \in E_p \longmapsto B_{\eta} \xi \left( p \right) \in E_p$ and we denote by
  $T B \xi \left( p \right) \in C^{\infty} \left( E_p, T_{E_p} \right)$ the
  section $\eta \in E_p \longmapsto T_{\eta} B_{\eta} \xi \left( p \right) \in
  T_{E_p, \eta}$. Then for any $\eta \in E_p$
  \begin{eqnarray*}
    \left[ T B \xi_1, T B \xi_2 \right]_{\eta} & = & \left[ T B \xi_1 \left( p
    \right), T B \xi_2 \left( p \right) \right]_{\eta}\\
    &  & \\
    & = & T_{\eta}  \left[ B \xi_1 \left( p \right), B \xi_2 \left( p
    \right) \right]_{\eta} \\
    &  & \\
    & = & T_{\eta}  \left[ D_{\eta} B \left( B_{\eta} \xi_1 \left( p \right)
    \right) \xi_2 \left( p \right) - D_{\eta} B \left( B_{\eta} \xi_2 \left( p
    \right) \right) \xi_1 \left( p \right) \right],
  \end{eqnarray*}
  which shows (\ref{BracketTB}).
\end{proof}

We notice now that for any covariant derivative $\nabla$ over $E$, the
identity (\ref{BraketConn}) can be expressed as
\begin{equation}
  \label{BraketConnMod}  \left[ H^{\nabla} \xi, T \, \pi_E^{\ast}
  s \right] = T \, \pi_E^{\ast} \left( \nabla_{\xi} s \right),
\end{equation}
for any vector field $\xi \in C^{\infty} \left( M, T_M \right)$ and any
section $s \in C^{\infty} \left( M, E \right)$. We need to show the following
more general formula.

\begin{lemma}
  Let $\left( E, \pi_E, M \right)$ be a smooth vector bundle over a manifold
  $M$ and let $\nabla$ be a covariant derivative operator acting on the smooth
  sections of $E$. Then the equality holds
  \begin{equation}
    \label{BraketConnGen}  \left[ H^{\nabla} \xi, T \, \sigma
    \right] = T \nabla_{H^{\nabla} \xi}^{\pi_E} \sigma\,,
  \end{equation}
  for any vector field $\xi \in C^{\infty} \left( M, T_M \right)$ and for any
  section $\sigma \in C^{\infty} \left( E, \pi_E^{\ast} E \right)$.
\end{lemma}

We observe that (\ref{BraketConnGen}) implies (\ref{BraketConnMod}), since
$\nabla_{H^{\nabla} \xi}^{\pi_E} \sigma = \pi_E^{\ast} \left( \nabla_{\xi} s
\right)$, thanks to the functorial property (\ref{functConn}).

\begin{proof}
  In order to show the identity (\ref{BraketConnGen}) we notice first that the assumption
  $\sigma \in C^{\infty} \left( E, \pi_E^{\ast} E \right)$ means that $\sigma$
  is a map $\sigma : E \longrightarrow E$ such that $\pi_E \circ \sigma =
  \pi_E$. Then the $1$-parameter subgroup of transformations of $E$ associated
  to the vector field $T \sigma$ satisfies $\Phi_{T \sigma, t} \left( \eta
  \right) = \eta + t \sigma \left( \eta \right)$. Moreover with the notation
  in the proof of identity (\ref{BraketConn})
  \begin{eqnarray*}
    \left[ H^{\nabla} \xi_{_{_{_{}}}}, T \, \sigma \right] & = & \frac{d}{d t}
    _{\mid_{t = 0}}  \frac{d}{d s} _{\mid_{s = 0}}  \left( \Phi_{\xi, - t}
    \circ \Phi_{T \sigma, s} \circ \Phi_{\xi, t} \right) .
  \end{eqnarray*}
  The fact that $\Phi_{\xi, - t}$ is linear on the fibers of $E$ implies
  \begin{eqnarray*}
    \Phi_{\xi, - t} \circ \Phi_{\Sigma, s} \circ \Phi_{\xi, t} & = &
    \Phi_{\xi, - t} \left[ \Phi_{\xi, t} + s \sigma \circ \Phi_{\xi, t}
    \right]\\
    &  & \\
    & = & \mathbbm{I}_E \noplus + s \Phi_{\xi, - t} \cdot \sigma \circ
    \Phi_{\xi, t} \,.
  \end{eqnarray*}
  We infer
  \begin{eqnarray*}
    \frac{d}{d s} _{\mid_{s = 0}}  \left( \Phi_{\xi, - t} \circ \Phi_{T
    \sigma, s} \circ \Phi_{\xi, t} \right) \left( \eta \right) & = & T_{\eta}
    \Phi_{\xi, - t} \cdot \sigma \circ \Phi_{\xi, t} \left( \eta \right),
  \end{eqnarray*}
  for any $\eta \in E_p$. We observe that $\sigma \circ \Phi_{\xi, t} \left(
  \eta \right) \in E_{\varphi_{\xi, t} \left( p \right)}$. Indeed using the
  property $\pi_E \circ \sigma = \pi_E$ we deduce
  \begin{eqnarray*}
    \pi_E \circ \sigma \circ \Phi_{\xi, t} \left( \eta \right) & = & \pi_E
    \circ \Phi_{\xi, t} \left( \eta \right)\\
    &  & \\
    & = & \varphi_{\xi, t} \left( p \right) .
  \end{eqnarray*}
  We remind now that if $t \longmapsto \eta_t \in E$ is a smooth curve such
  that $c_t \assign \pi_E \left( \eta_t \right)$ then
  \begin{eqnarray*}
    T_{\eta_0}^{- 1} \gamma^{\nabla}_{\eta_0}  \dot{\eta}_0 & = & \frac{d}{d
    t} _{\mid_{t = 0}}  \left( \tau^{- 1}_{c, t} \eta_t \right),
  \end{eqnarray*}
  thanks to formula (\ref{covpar}). We apply the previous identity to the
  curve $\eta_t \assign \sigma \circ \Phi_{\xi, t} \left( \eta \right) \in
  E_{\varphi_{\xi, t} \left( p \right)}$. We obtain
  \begin{eqnarray*}
    T_{\sigma \left( \eta \right)}^{- 1} \gamma_{\sigma \left( \eta
    \right)}^{\nabla}  \frac{d}{d t} _{\mid_{t = 0}}  \left[ \sigma \circ
    \Phi_{\xi, t} \left( \eta \right)_{_{_{_{}}}} \right] & = & \frac{d}{d t}
    _{\mid_{t = 0}}  \left[ \Phi_{\xi, - t} \cdot \sigma \circ \Phi_{\xi, t}
    \left( \eta \right)_{_{_{_{}}}} \right]\\
    &  & \\
    & = & T^{- 1}_{\eta} \left[ H^{\nabla} \xi_{_{_{_{}}}}, T \, \sigma
    \right] \left( \eta \right) .
  \end{eqnarray*}
  Moreover
  \begin{eqnarray*}
    \frac{d}{d t} _{\mid_{t = 0}}  \left[ \sigma \circ \Phi_{\xi, t} \left(
    \eta \right)_{_{_{_{}}}} \right] & = & d_{\eta} \sigma \cdot
    \dot{\Phi}_{\xi, 0} \left( \eta \right)\\
    &  & \\
    & = & d_{\eta} \sigma \cdot H_{\eta}^{\nabla} \xi \left( p \right) .
  \end{eqnarray*}
  We conclude the equality
  \begin{eqnarray*}
    T_{\sigma \left( \eta \right)}^{- 1} \gamma_{\sigma \left( \eta \right)}
    d_{\eta} \sigma \cdot H_{\eta}^{\nabla} \xi \left( p \right) & = & T^{-
    1}_{\eta} \left[ H^{\nabla} \xi_{_{_{_{}}}}, T \, \sigma \right] \left(
    \eta \right),
  \end{eqnarray*}
  which represents the required formula (\ref{BraketConnGen}).
\end{proof}

We can show now the following result.

\begin{lemma}
  Let $\left( E, \pi_E, M \right)$ be a smooth vector bundle over a manifold
  $M$ and let $\nabla$ and $\nabla^{T_M}$ be covariant derivative operators
  acting respectively on the smooth sections of the bundles $E$ and $T_M$.
  
  Then for any section $B \in C^{\infty} \left( E, \pi_E^{\ast} \left(
  T^{\ast}_M \otimes E \right) \right)$ the curvature field
  $\Theta^{\alpha}$ of the horizontal form $\alpha \assign H^{\nabla} + T B$
  satisfies
  \begin{equation}
    \label{curvExpans} \Theta^{\alpha} = - H^{\nabla} \neg \nabla^{T^{\ast}_M
    \otimes E \nocomma, \pi_E} B - B \neg D B \noplus - B \tau^{\nabla^{T_M}}
    + R^{\nabla},
  \end{equation}
  where $\nabla^{T^{\ast}_M \otimes E, \pi_E}$ is the covariant derivative
  acting on the smooth sections of the bundle $\pi_E^{\ast} \left( T^{\ast}_M \otimes E
  \right)$, induced by $\nabla$ and $\nabla^{T_M}$ and where
  $\tau^{\nabla^{T_M}}$ is the torsion form of $\nabla^{T_M}$.
\end{lemma}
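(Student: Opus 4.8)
The plan is to deduce (\ref{curvExpans}) directly from the comparison Lemma (\ref{ComparCurv1,2}), specializing its second connection to $\nabla$ itself. Concretely, I would set $\gamma_2 \assign \gamma^{\nabla}$, the linear connection form of $\nabla$, so that $\alpha_2 = H^{\nabla}$ and, since $R^{\nabla^{\gamma}} = \Theta^{\gamma}$ for linear connections, $\Theta_2 = R^{\nabla}$. Taking $\alpha_1 \assign \alpha = H^{\nabla} + T B$ gives $T^{-1}(\alpha_1 - \alpha_2) = B$, so the hypotheses of (\ref{ComparCurv1,2}) hold and
\begin{eqnarray*}
\Theta^{\alpha}(\xi_1, \xi_2) & = & (R^{\nabla} - B \neg D B)(\xi_1, \xi_2)\\
&  & \\
& - & T^{- 1}\left([H^{\nabla}\xi_1, T B\xi_2] - [H^{\nabla}\xi_2, T B\xi_1]\right) + B[\xi_1, \xi_2]\,.
\end{eqnarray*}

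The heart of the argument is the reduction of the two Lie bracket terms. Since $B\xi_2 \equiv B\cdot(\xi_2 \circ \pi_E)$ is a section of $\pi_E^{\ast}E$, formula (\ref{BraketConnGen}) applies and gives $T^{- 1}[H^{\nabla}\xi_1, T B\xi_2] = \nabla^{\pi_E}_{H^{\nabla}\xi_1}(B\xi_2)$, where $\nabla^{\pi_E}$ denotes the pull-back of $\nabla$. Expanding this by the Leibniz rule for the induced connection $\nabla^{T^{\ast}_M \otimes E, \pi_E}$ splits it as $(\nabla^{T^{\ast}_M \otimes E, \pi_E}_{H^{\nabla}\xi_1} B)(\xi_2) + B(\nabla^{T_M}_{\xi_1}\xi_2)$, where the functorial property (\ref{functConn}) together with $d\pi_E \cdot H^{\nabla}\xi_1 = \xi_1$ has been used to identify the pull-back derivative of $\xi_2 \circ \pi_E$ with $\pi_E^{\ast}(\nabla^{T_M}_{\xi_1}\xi_2)$. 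Subtracting the analogous expression with $\xi_1$ and $\xi_2$ interchanged, the first pieces combine, by the definition of the contraction $\neg$ through $\tmop{Alt}_2$, into $(H^{\nabla}\neg\, \nabla^{T^{\ast}_M \otimes E, \pi_E} B)(\xi_1, \xi_2)$, while the remaining pieces give $B\left(\nabla^{T_M}_{\xi_1}\xi_2 - \nabla^{T_M}_{\xi_2}\xi_1\right)$.

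To finish, I would invoke the definition of torsion in the form $\nabla^{T_M}_{\xi_1}\xi_2 - \nabla^{T_M}_{\xi_2}\xi_1 = \tau^{\nabla^{T_M}}(\xi_1, \xi_2) + [\xi_1, \xi_2]$. Substituting back into the displayed comparison identity, the two occurrences of $B[\xi_1, \xi_2]$ cancel and one is left exactly with
\begin{eqnarray*}
\Theta^{\alpha}(\xi_1, \xi_2) & = & R^{\nabla}(\xi_1, \xi_2) - (B \neg D B)(\xi_1, \xi_2)\\
&  & \\
& - & (H^{\nabla}\neg\, \nabla^{T^{\ast}_M \otimes E, \pi_E} B)(\xi_1, \xi_2) - B\,\tau^{\nabla^{T_M}}(\xi_1, \xi_2)\,,
\end{eqnarray*}
which is the claimed identity (\ref{curvExpans}).

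I expect the main obstacle to be the bookkeeping in the middle step: applying the Leibniz rule so that the contraction of $B$ against $\xi_2 \circ \pi_E$ is differentiated as a genuine product, and matching the resulting antisymmetrized derivative of $B$ with the sign conventions built into the operators $\neg$ and $\tmop{Alt}_2$. Once the functorial identification of the pull-back derivative of $\xi_j \circ \pi_E$ with $\pi_E^{\ast}(\nabla^{T_M}\xi_j)$ is in place and the torsion is introduced, the cancellation of the $B[\xi_1, \xi_2]$ terms is automatic and the formula follows.
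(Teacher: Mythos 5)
Your proposal is correct and coincides with the paper's own proof in every essential step: you specialize the comparison identity (\ref{ComparCurv1,2}) to $\alpha_2 = H^{\nabla}$, convert the two Lie brackets via (\ref{BraketConnGen}), expand by the Leibniz rule together with the functorial property (\ref{functConn}) to split off $B(\nabla^{T_M}_{\xi_1}\xi_2 - \nabla^{T_M}_{\xi_2}\xi_1)$, and absorb the $B[\xi_1,\xi_2]$ terms into $-B\,\tau^{\nabla^{T_M}}$. The only (harmless) cosmetic difference is that you spell out the identification $\Theta_2 = R^{\nabla}$ via $R^{\nabla^{\gamma}} = \Theta^{\gamma}$ for linear connections, which the paper uses implicitly, having justified it earlier through lemma \ref{TorsCurv}.
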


\begin{proof}
  In the case $\alpha_2 = H^{\nabla}$ in the identity (\ref{ComparCurv1,2}) we
  can apply the formula (\ref{BraketConnGen}) to the sections $B \xi_j \in
  C^{\infty} \left( E, \pi_E^{\ast} E \right)$. We obtain
  \begin{eqnarray*}
    \Theta_1 \left( \xi_1, \xi_2 \right) & = & \left( R^{\nabla} - B \neg D B
    \right) \left( \xi_1, \xi_2 \right)\\
    &  & \\
    & - & \nabla_{H^{\nabla} \xi_1}^{\pi_E} \left( B \xi_2 \right) +
    \nabla_{H^{\nabla} \xi_2}^{\pi_E} \left( B \xi_1 \right) \noplus + B
    \left[ \xi_1, \xi_2 \right] .
  \end{eqnarray*}
  Using functorial properties of the pull-back we have (with no abuse of
  notation)
  \begin{eqnarray*}
    \nabla_{H^{\nabla} \xi_1}^{\pi_E} \left( B \cdot \pi^{\ast}_E \,\xi_2
    \right) & = & \nabla_{H^{\nabla} \xi_1}^{T^{\ast}_M \otimes E \nocomma,
    \pi_E} B \cdot \pi^{\ast}_E \,\xi_2 + B \cdot \nabla_{H^{\nabla}
    \xi_1}^{T_M, \pi_E} ( \pi^{\ast}_E \,\xi_2)\\
    &  & \\
    & = & \nabla_{H^{\nabla} \xi_1}^{T^{\ast}_M \otimes E \nocomma, \pi_E} B
    \cdot \pi^{\ast}_E\, \xi_2 + B \cdot \pi^{\ast}_E (
    \nabla_{\xi_1}^{T_M} \xi_2 ) \,.
  \end{eqnarray*}
  We conclude by (\ref{ComparCurv1,2}) that if
  $\alpha_1 = \alpha = H^{\nabla} + T B$ then the curvature field
  $\Theta^{\alpha}$ of $\alpha$ satisfies the identity
  \begin{eqnarray*}
    \Theta^{\alpha} \left( \xi_1, \xi_2 \right) & = & \left( R^{\nabla} - B
    \neg D B \right) \left( \xi_1, \xi_2 \right)\\
    &  & \\
    & - & \nabla_{H^{\nabla} \xi_1}^{T^{\ast}_M \otimes E \nocomma, \pi_E} B
    \xi_2 + \nabla_{H^{\nabla} \xi_2}^{T^{\ast}_M \otimes E \nocomma, \pi_E} B
    \xi_1 \noplus - B \tau^{\nabla^{T_M}} \left( \xi_1, \xi_2 \right),
  \end{eqnarray*}
  We infer the required formula (\ref{curvExpans}).
\end{proof}

\section{First reduction of the integrability equations}\label{proofTm1}

{\tmstrong{Proof of theorem \ \ref{GenInteg}.}}

\begin{proof}
  Let $\gamma^A$ be the connection form associated to the horizontal form $A$.
  Then the integrability of $J_A$ is equivalent to the condition
  \begin{equation}
    \label{Aintegrab} \gamma^A [A \xi_1, A \xi_2] = 0,
  \end{equation}
  for all smooth complex vector fields $\xi_1$, $\xi_2$ over $M$. (We remind
  here the use of the abusive notation $A \xi \equiv A \left( \xi \circ \pi
  \right)$). We denote respectively by $\Theta^A$ and $\Theta^{\alpha}$ the
  curvature fields of the horizontal distributions $A$ and $\alpha$. The
  integrability condition (\ref{Aintegrab}) is equivalent to the condition
  $\Theta^A \equiv 0$. Then applying the identity (\ref{ComparCurv1,2}) with
  $\alpha_1 = A$, $\alpha_2 = \alpha$ and separating real and imaginary parts
  we deduce that the integrability of $J_A$ is equivalent to the system
  \begin{equation}
    \label{SpCoolAinteg}  \left\{ \begin{array}{l}
      \Theta^{\alpha} + B \neg D B \;=\; 0\,,\\
      \\
      T B [\xi_1, \xi_2] \; =\; [\alpha \xi_1, T B \xi_2] - [\alpha
      \xi_2, T B \xi_1]\, .
    \end{array} \right.
  \end{equation}
  Let
  $\Gamma \in C^{\infty} \left( U, \pi^{\ast} \tmop{End} \left( T_M \right)
  \right)$ such that $\alpha = H^{\nabla} - T \Gamma$. Using the formula (\ref{curvExpans}) in the case $E = T_M$ and $\nabla =
  \nabla^{T_M}$ we can write the previous equation of the system
  (\ref{SpCoolAinteg}) as
  \begin{eqnarray*}
    H^{\nabla} \neg \nabla^{\tmop{End} \left( T_M \right), \pi} \Gamma -
    \Gamma \neg D \Gamma \noplus \noplus \noplus + \Gamma \tau^{\nabla^{}} + B
    \neg D B + R^{\nabla} = 0 \,.
  \end{eqnarray*}
  We express the second equation of the system (\ref{SpCoolAinteg}) as
  \begin{eqnarray*}
    T B [\xi_1, \xi_2] & = & \left[ H^{\nabla} \xi_1, T B \xi_2
    \right] - [T \Gamma \xi_1, T B \xi_2]\\
    &  & \\
    & - & \left[ H^{\nabla} \xi_2, T B \xi_1 \right] + [T \Gamma
    \xi_2, T B \xi_1] \,.
  \end{eqnarray*}
  Using formula (\ref{BraketConnGen}) we infer
  \begin{eqnarray*}
    B [\xi_1, \xi_2] & = & \nabla_{H^{\nabla} \xi_1}^{\tmop{End} \left( T_M
    \right), \pi} B \xi_2 - \nabla_{H^{\nabla} \xi_2}^{\tmop{End} \left( T_M
    \right) \nocomma, \pi} B \xi_1 \noplus + B \left( \nabla_{\xi_1}^{} \xi_2
    - \nabla_{\xi_2} \xi_1 \right)\\
    &  & \\
    & - & D B \left( \Gamma \xi_1 \right) \xi_2 + D \Gamma \left( B \xi_2
    \right) \xi_1\\
    &  & \\
    & + & D B \left( \Gamma \xi_2 \right) \xi_1 - D \Gamma \left( B \xi_1
    \right) \xi_2\,,
  \end{eqnarray*}
  which can be expressed as
  \begin{eqnarray*}
    H^{\nabla} \neg \nabla^{\tmop{End} \left( T_M \right) \nocomma, \pi} B -
    \Gamma \neg D B \noplus \noplus \noplus - B \neg D \Gamma + B
    \tau^{\nabla} = 0\, .
  \end{eqnarray*}
  We conclude that the system (\ref{SpCoolAinteg}) is equivalent to the system
  \begin{equation}
    \label{HpCoolAinteg}  \left\{ \begin{array}{l}
      H^{\nabla} \neg \nabla^{\tmop{End} \left( T_M \right) \nocomma, \pi}
      \Gamma - \Gamma \neg D \Gamma \noplus \noplus \noplus + \Gamma
      \tau^{\nabla^{}} + B \neg D B + R^{\nabla} \; =\; 0\,,\\
      \\
      H^{\nabla} \neg \nabla^{\tmop{End} \left( T_M \right) \nocomma, \pi} B -
      \Gamma \neg D B \noplus \noplus \noplus - B \neg D \Gamma + B
      \tau^{\nabla} \; =\; 0\, .
    \end{array} \right.
  \end{equation}
  It follows that, using the identification $S = \Gamma + i B$, the system
  (\ref{HpCoolAinteg}) is equivalent to the complex equation
  (\ref{MainInteg}).
\end{proof}

\begin{remark}
  We notice that in the case $\left( \alpha, B \right) = \left( H^{\nabla},
  \mathbbm{I}_{\pi^{\ast} T_M} \right)$, i.e. in the case $J_A =
  J_{H^{\nabla}}$, the system (\ref{HpCoolAinteg}) reduces to
  \[ \left\{ \begin{array}{l}
       R^{\nabla} \; =\; 0\,,\\
       \\
       \tau^{\nabla} \; =\; 0 \,.
     \end{array} \right. \]
  In this way we re-obtain the statement of lemma \ref{affineLm}.
\end{remark}

\begin{lemma}
  \label{IntermediateInteg}Under the assumptions of the theorem
  \ref{Maintheorem} the $M$-totally real almost complex structure $J_A$ is
  integrable over $U$ if and only if
  \begin{eqnarray*}
    S_1 \in C^{\infty} \left( M, S^2 T^{\ast}_M
    \otimes_{_{\mathbbm{R}}} \mathbbm{C}T_M \right), 
  \end{eqnarray*}
$\left( \right.$i.e. $\nabla^{S_1}$ is torsion free$\left. \right)$,
  \begin{equation}
    \label{ZotEq} R^{\nabla^{S_1}} = - 2 i \tmop{Alt}_2 S_2\,,
  \end{equation}
  \begin{equation}
    \label{EqSk}  \left[ d_1^{\nabla^{S_1}} S_k + \sum_{p = 2}^{k - 1} p S_p
    \wedge_1 S_{k - p + 1} + i \left( k + 1 \right) \tmop{Alt}_2 S_{k + 1}
    \right] \left( \xi_1, \xi_2, \eta^k \right) = 0\, .
  \end{equation}
  for all $k \geqslant 2$ and for all $\xi_1, \xi_2, \eta \in T_{M, \pi \left(
  \eta \right)}$.
\end{lemma}

\begin{proof}
Let 
$S\assign T^{-1}(H^{\nabla}-\overline{A}\,)$. In the case the connection $\nabla$ is torsion free the equation
  (\ref{MainInteg}) reduces to
  \begin{equation}
    \label{IntegrabEq} H^{\nabla} \neg \nabla^{\tmop{End} \left( T_M \right)
    \nocomma, \pi} S - S \neg D S \noplus \noplus \noplus + R^{\nabla} = 0\, .
  \end{equation}
 The identification $\mathcal{S}_{k, \eta} \cdot \xi \equiv S_k \left(
  \xi, \eta^k \right)$ shows that $\mathcal{S}_{k, \eta} \in T^{\ast}_{M, \pi
  \left( \eta \right)} \otimes \mathbbm{C} T_{M, \pi \left( \eta \right)}$, i.e.
  \begin{eqnarray*}
    \mathcal{S}_k \in C^{\infty} \left( T_M, \pi^{\ast} \left( T^{\ast}_M
    \otimes_{_{\mathbbm{R}}} \mathbbm{C}T_M \right)_{_{}} \right),
  \end{eqnarray*}
  and
  \begin{equation}
    \label{SSeriesExpa} S = \sum_{k \geqslant 0} \mathcal{S}_k \,.
  \end{equation}
  We remind the reader of the formula
  \begin{eqnarray*}
    \nabla_{H^{\nabla} \xi_1}^{\pi}  \left( \mathcal{S}_k \cdot \xi_2 \right)
    = \nabla_{H^{\nabla} \xi_1}^{\tmop{End} \left( T_M \right) \nocomma,
    \pi} \mathcal{S}_k \cdot \xi_2 +\mathcal{S}_k \cdot \nabla_{\xi_1} \xi_2\,,
  \end{eqnarray*}
  for any vector field $\xi_1$, $\xi_2$ over $M$. On the other hand, by
  definition
  \begin{eqnarray*}
    &&\nabla_{H^{\nabla} \xi_1}^{\pi}  \left( \mathcal{S}_k \cdot \xi_2
    \right)_{\mid \eta}\\
    &  & \\
    & = & T_{S_k \left( \xi_2, \eta^k \right)}^{- 1}
    \gamma_{S_k \left( \xi_2, \eta^k \right)}^{\nabla} d_{\eta} \left(
    \mathcal{S}_k \cdot \xi_2 \right) \left( H^{\nabla} \xi_1 \right) \\
    &  & \\
    & = & T_{S_k \left( \xi_2, \eta^k \right)}^{- 1} \gamma_{S_k \left(
    \xi_2, \eta^k \right)}^{\nabla}  \frac{d}{d t} _{\mid_{t = 0}}  \left[ S_k
    \left( \xi_2 \circ \varphi_{\xi_1, t} \circ \pi \left( \eta \right),
    \Phi_{\xi_1, t} \left( \eta \right)_{_{}}^k \right)_{_{_{_{}}}} \right] .
  \end{eqnarray*}
  Let now $\bf{\eta}$ be the vector field over $\tmop{Im} \left(
  \varphi_{\xi_1, \bullet} \circ \pi \left( \eta \right) \right)$ defined by
  $$
  \bf{\eta} \left( \varphi_{\xi_1, t} \circ \pi \left( \eta \right)
  \right) = \Phi_{\xi_1, t} \left( \eta \right).
  $$ Then
  \begin{eqnarray*}
    \nabla_{H^{\nabla} \xi_1}^{\pi}  \left( \mathcal{S}_k \cdot \xi_2
    \right)_{\mid \eta} & = & \nabla_{\xi_1} \left[ S_k \left( \xi_2,
    {\bf\eta}^k \right) \right]_{\mid \pi \left( \eta
    \right)}\\
    &  & \\
    & = & \nabla_{\xi_1} S_k  \left( \xi_2 \circ \pi \left( \eta
    \right), \eta^k \right) + S_k \left( \nabla_{\xi_1} \xi_{2
    \mid \pi \left( \eta \right)} \nocomma, \eta_{_{}}^k \right),
  \end{eqnarray*}
  since $\nabla_{\xi_1} {\bf\eta}= 0$. We conclude the identity
  \begin{eqnarray*}
    \left( \nabla_{H^{\nabla} \xi_1}^{\tmop{End} \left( T_M \right), \pi}
    \mathcal{S}_k \right)_{\mid \eta} \cdot \xi_2 = \nabla_{\xi_1} S_k 
    \left( \xi_2, \eta^k \right),
  \end{eqnarray*}
  $\xi_1, \xi_2 \in T_{M, \pi \left( \eta \right)}$. We infer the formula
  \begin{equation}
    \label{d1exterior} H^{\nabla} \neg \nabla^{\tmop{End} \left( T_M \right)
    \nocomma, \pi} \mathcal{S}_k = d_1^{\nabla} S_k\,,
  \end{equation}
  We notice now the equalities
  \begin{eqnarray*}
    D_{\eta} \mathcal{S}_k  \left( v \right) \cdot \xi & = & \frac{d}{d t}
    _{\mid_{t = 0}}  \left[ S_k \left( \xi_{_{_{_{}}}}, \left( \eta + t v
    \right)^k \right)_{_{_{_{}}}} \right]\\
    &  & \\
    & = & \sum_{j = 1}^k S_k \left( \xi, \eta^{j - 1}, v, \eta^{k - j}
    \right)\\
    &  & \\
    & = & k S_k  \left( \xi, v, \eta^{k - 1} \right),
  \end{eqnarray*}
  and
  \begin{eqnarray*}
    \left( \mathcal{S}_l \neg D\mathcal{S}_k \right)_{\mid \eta} \left( \xi_1,
    \xi_2 \right) = k S_k \left( \xi_2, \mathcal{S}_{l, \eta} \cdot \xi_1,
    \eta^{k - 1} \right) - k S_k \left( \xi_1, \mathcal{S}_{l, \eta} \cdot
    \xi_2, \eta^{k - 1} \right) .
  \end{eqnarray*}
  We infer the equality
  \begin{equation}
    \label{Exterior1}  \left( \mathcal{S}_l \neg D\mathcal{S}_k \right)_{\mid
    \eta} \left( \xi_1, \xi_2 \right) = - k \left( S_k \wedge_1 S_l \right)
    \left( \xi_1, \xi_2, \eta^{k + l - 1} \right) .
  \end{equation}
  Let $W \subset U$ be any set containing the zero section of $T_M$ such that
  $W \cap T_{M, p}$ is a neighborhood of $0_p$ for any $p \in M$ and such that
  the fiberwise expansion (\ref{SSeriesExpa}) converges over $W \cap T_{M,
  p}$. The fact that by assumption $U \cap T_{M, p}$ is connected implies by
  the fiberwise real analyticity of $S$ that $S$ is a solution of
  (\ref{IntegrabEq}) over $U$ if and only if it satisfies (\ref{IntegrabEq})
  over $W$.
  
  Using (\ref{d1exterior}) we can write the equation (\ref{IntegrabEq}) under
  the form
  \begin{equation}
    \label{IntegrabEqDec}  \sum_{k \geqslant 1} d_1^{\nabla} S_k - \sum_{l, p
    \geqslant 0} \left( \mathcal{S}_l \neg D\mathcal{S}_p \right) + R^{\nabla}
    = 0\,,
  \end{equation}
  over $W$. We decompose the sum
  \begin{eqnarray*}
    &&\sum_{l, p \geqslant 0} \left( \mathcal{S}_l \neg D\mathcal{S}_p \right) \\
    &  & \\
    &= & \sum_{l \geqslant 0, p \geqslant 1} \left( \mathcal{S}_l \neg
    D\mathcal{S}_p \right)\\
    &  & \\
    & = & \sum_{l, p \geqslant 1} \left( \mathcal{S}_l \neg D\mathcal{S}_p
    \right) + i \sum_{k \geqslant 0} \left( \mathbbm{I}_{T_M} \neg
    D\mathcal{S}_{k + 1} \right)\\
    &  & \\
    & = & \sum_{k \geqslant 1} \sum_{p = 1}^k \left( \mathcal{S}_{k - p + 1}
    \neg D\mathcal{S}_p \right) + i \sum_{k \geqslant 0} \left(
    \mathbbm{I}_{T_M} \neg D\mathcal{S}_{k + 1} \right)\\
    &  & \\
    & = & - \sum_{k \geqslant 1} \sum_{p = 1}^k p \left( S_p \wedge_1 S_{k -
    p + 1} \right) - i \sum_{k \geqslant 0} \left( k + 1 \right) \left( S_{k +
    1} \wedge_1 \mathbbm{I}_{T_M} \right),
  \end{eqnarray*}
  thanks to the equality (\ref{Exterior1}). If we denote by $\deg_{\eta}$the
  degree with respect to the fibre variable $\eta \in E_{\pi \left( \eta
  \right)}$ we have
  \begin{eqnarray*}
    \deg_{\eta} d_1^{\nabla} S_k  & = & \deg_{\eta} \left( S_p \wedge_1 S_{k -
    p + 1} \right) = k,\\
    &  & \\
    \deg_{\eta}  \left( S_{k + 1} \wedge_1 \mathbbm{I}_{T_M} \right) & = &
    k,\\
    &  & \\
    \deg_{\eta} R^{\nabla} & = & 1 .
  \end{eqnarray*}
  Thus by homogeneity the equation (\ref{IntegrabEqDec}) is equivalent to the
  countable system
  \begin{equation}
    \label{Denom-integ}  \left\{ \begin{array}{l}
      S_1 \wedge_1 \mathbbm{I}_{T_M} \; =\; 0\,,\\
      \\
      d_1^{\nabla} S_1 + S_1 \wedge_1 S_1 \noplus \noplus \noplus + 2 i S_2
      \wedge_1 \mathbbm{I}_{T_M} + R^{\nabla} \; =\; 0\,,\\
      \\
      \left[ d_1^{\nabla} S_k + \sum_{p = 1}^k p \left( S_p \wedge_1 S_{k - p
      + 1} \right) \noplus \noplus \noplus + i \left( k + 1 \right) S_{k + 1}
      \wedge_1 \mathbbm{I}_{T_M} \right] \left( \xi_1, \xi_2, \eta^k \right)
      \;=\; 0\,, 
      \\
      \\
      \forall  k \geqslant 2\,,\; \forall \xi_1, \xi_2, \eta \in T_M\,.
    \end{array} \right.
  \end{equation}The first equation in the system means
  $S_1 \in C^{\infty} \left( M, S^2 T^{\ast}_M
  \otimes_{_{\mathbbm{R}}} \mathbbm{C}T_M \right)$, i.e. the complex
  connection $\nabla^{S_1}$ is torsion free.
The second equation in the system (\ref{Denom-integ}) rewrites as
  (\ref{ZotEq}).
We show now that the equation for $k \geqslant 2$ in the system
  (\ref{Denom-integ}) rewrites as (\ref{EqSk}). Indeed using the formula
  \begin{eqnarray*}
    \nabla_{\xi}^{\Gamma} \theta \left( v_1, \ldots, v_p \right) & = &
    \nabla_{\xi} \theta \left( v_1, \ldots, v_p \right) + \Gamma \left( \xi,
    \theta \left( v_1, \ldots, v_p \right) \right)\\
    &  & \\
    & - & \sum_{j = 1}^p \theta \left( v_1, \ldots, v_{j - 1}, \Gamma \left(
    \xi, v_j \right), v_{j + 1}, \ldots, v_p \right),
  \end{eqnarray*}
  where $\Gamma \in C^{\infty}( M, T^{\ast, \otimes 2}_M \otimes_{_{\mathbbm{R}}} \mathbbm{C}T_M) $, $\theta \in C^{\infty}
  \left( M, T^{\ast, \otimes p}_M \otimes_{_{\mathbbm{R}}} \mathbbm{C}T_M
  \right)$ and $\xi, v_k \in T_M$, we infer
  \begin{eqnarray*}
    &&d_1^{\nabla^{S_1}} S_k  \left( \xi_1, \xi_2, \eta^k \right) \\
    &  & \\
    & = &
    \nabla_{\xi_1} S_k  \left( \xi_2, \eta^k \right) - \nabla_{\xi_2} S_k 
    \left( \xi_1, \eta^k \right)\\
    &  & \\
    & + & S_1 \left( \xi_1, S_k  \left( \xi_2, \eta^k \right) \right) - S_k 
    \left( S_1  \left( \xi_1, \xi_2 \right), \eta^k \right) - k S_k  \left(
    \xi_2, S_1  \left( \xi_1, \eta \right), \eta^{k - 1} \right)\\
    &  & \\
    & - & S_1 \left( \xi_2, S_k  \left( \xi_1, \eta^k \right) \right) + S_k 
    \left( S_1  \left( \xi_2, \xi_1 \right), \eta^k \right) + k S_k  \left(
    \xi_1, S_1  \left( \xi_2, \eta \right), \eta^{k - 1} \right)\\
    &  & \\
    & = & \left[ d_1^{\nabla} S_k + S_1 \wedge_1 S_k + k_{_{_{_{_{}}}}} S_k
    \wedge_1 S_1 \right] \left( \xi_1, \xi_2, \eta^k \right),
  \end{eqnarray*}
  since $S_1$ is symmetric and $S_k$ is symmetric in the last $k$ variables.
  We conclude (\ref{EqSk}).
\end{proof}

\begin{remark}
  In the case $S_k = 0$, for all $k \geqslant 2$, the previous system reduces
  to the equation
  \begin{equation}
    \label{LinearInteg} d_1^{\nabla} S_1 + S_1 \wedge_1 S_1 + R^{\nabla} = 0 \,.
  \end{equation}
  The equation (\ref{LinearInteg}) means that the complex connection
  $\nabla^{S_1}$ acting on sections of $\mathbbm{C}T_M$ is flat. In the case
  $B =\mathbbm{I}_{\pi^{\ast} T_M}$, the second equation in the system
  (\ref{Denom-integ}) implies
  \begin{eqnarray*}
    d_1^{\nabla} \Gamma_1 + \Gamma_1 \wedge_1 \Gamma_1 \noplus \noplus \noplus
    + R^{\nabla} = 0\,,
  \end{eqnarray*}
  with $\Gamma_1 \assign S_1$. This means that the real connection
  $\nabla^{\Gamma_1}$ is flat.
\end{remark}
\section{Second reduction of the integrability equations}

In this section we will prove the following result.

\begin{proposition}
  \label{MainProp}Under the assumptions of the theorem \ref{Maintheorem} the
  $M$-totally real almost complex structure $J_A$ is integrable over $U$ if and only if
  \begin{eqnarray*}
    S_1 & \in & C^{\infty} \left( M, S^2 T^{\ast}_M
    \otimes_{_{\mathbbm{R}}} \mathbbm{C}T_M \right), \; \text{i.e.
    $\nabla^{S_1}$ is torsion free},\\
    &  & \\
    S_2 & = & S^0_2 + \sigma_2\,,\\
    &  & \\
    S^0_2 \left( \xi_1, \xi_2, \xi_3 \right) & \assign & \frac{i}{6}  \left[
    R^{\nabla^{S_1}} \left( \xi_1, \xi_2 \right) \xi_3 + R^{\nabla^{S_1}}
    \left( \xi_1, \xi_3 \right) \xi_2 \right],\\
    &  & \\
    \sigma_2 & \in & C^{\infty} \left( M_{_{_{_{}}}}, S^3 T^{\ast}_M
    \otimes_{_{\mathbbm{R}}} \mathbbm{C}T_M \right),\\
    &  & \\
    S_3 & = & \frac{i}{3} \nabla^{S_1} \sigma_2 + \frac{1}{4! 3}
    \tmop{Sym}_{2, 3, 4} ( \nabla^{S_1} R^{\nabla^{S_1}} )_2+ \sigma_3\,,
    \\
    &  & \\  
    \sigma_3 & \in & C^{\infty} \left( M_{_{_{_{}}}}, S^4 T^{\ast}_M
    \otimes_{_{\mathbbm{R}}} \mathbbm{C}T_M \right),
  \end{eqnarray*}
$( \nabla^{S_1} R^{\nabla^{S_1}})_2 \left( \xi_1, \xi_2, \xi_3,
    \xi_4 \right)  : = \nabla_{\xi_2}^{S_1} R^{\nabla^{S_1}} \left( \xi_1,
    \xi_3 \right) \xi_4$, for all $\xi_1, \xi_2, \xi_3, \xi_4 \in T_{M, \pi \left( \xi_1 \right)}$  and for all $k \geqslant 3$,
  \begin{eqnarray*}
    \left[ d_1^{\nabla^{S_1}} S_k + \sum_{p = 2}^{k - 1} p S_p \wedge_1 S_{k -
    p + 1} \noplus \noplus \noplus + i \left( k + 1 \right) \tmop{Alt}_2 S_{k
    + 1} \right] \left( \xi_1, \xi_2, \eta^k \right) = 0\,,
  \end{eqnarray*}
  for all $\xi_1, \xi_2, \eta \in T_{M, \pi \left( \eta \right)}$.
\end{proposition}

We first remind the reader that for any complex connection $\nabla$ acting over the
sections of $\mathbbm{C}T_M$ its torsion $\tau^{\nabla}$ satisfies the
identity
\begin{eqnarray*}
  \tau^{\nabla} = d^{\nabla} \mathbbm{I}_{T_M}\,,
\end{eqnarray*}
where $d^{\nabla}$ is the covariant exterior differentiation and 
$\mathbbm{I}_{T_M} \in C^{\infty} \left( M, T^{\ast}_M \otimes T_M \right)$.
Then
\begin{eqnarray*}
  d^{\nabla} \tau^{\nabla}= R^{\nabla} \wedge \mathbbm{I}_{T_M}\,,
\end{eqnarray*}
and
\begin{eqnarray*}
  (R^{\nabla} \wedge \mathbbm{I}_{T_M}) \left( \xi_1, \xi_2, \xi_3 \right) = R^{\nabla} \left( \xi_1, \xi_2 \right) \xi_3 + R^{\nabla} \left( \xi_2,
  \xi_3 \right) \xi_1 + \noplus R^{\nabla} \left( \xi_3, \xi_1 \right) \xi_2 \,.
\end{eqnarray*}
We conclude that if a connection is torsion free then then its curvature
operator satisfies the algebraic Bianchi identity.

We denote by $\tmop{Alt}_p$
the alternating operator (without normalizing
coefficients!) acting on the first $p \geqslant 2$ entries of a tensor,
counted from the left to the right. We notice the following very elementary
fact.
\begin{lemma}
  \label{exactSequence}Let $V$ be a vector space over a field $\mathbbm{K}$ of
  characteristic zero. Then for any integer $p \geqslant 2$, the sequence
  \[ 0 \longrightarrow S^{p + 1} V^{\ast} \longrightarrow V^{\ast} \otimes S^p
     V^{\ast} \xrightarrow{\tmop{Alt}_2 } \Lambda^2 V^{\ast} \otimes S^{p - 1}
     V^{\ast} \xrightarrow{\tmop{Alt}_3 } \Lambda^3 V^{\ast} \otimes S^{p - 2}
     V^{\ast}, \]
  is exact.
\end{lemma}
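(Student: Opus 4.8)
The plan is to recognize the displayed four-term sequence as the bottom of the Koszul complex of $V^{*}$ and to establish exactness through a contracting homotopy. Write $m := p+1$ and realize each term inside the tensor power $V^{*\otimes m}$: I identify $\Lambda^{k}V^{*}\otimes S^{m-k}V^{*}$ with the subspace $\mathcal{C}^{k}\subseteq V^{*\otimes m}$ of tensors that are alternating in their first $k$ arguments and symmetric in the remaining $m-k$. Under this identification $\tmop{Alt}_2,\tmop{Alt}_3$ become the restrictions of the unnormalized antisymmetrizers over the first two, respectively three, arguments, and the initial arrow is the tautological inclusion $S^{m}V^{*}=\mathcal{C}^{0}\hookrightarrow\mathcal{C}^{1}$. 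First I would record that we indeed have a complex: any $T\in\mathcal{C}^{1}$ is symmetric in its second and third arguments, so $\tmop{Alt}_3 T=0$, and a one-line manipulation of permutations gives $\tmop{Alt}_3\circ\tmop{Alt}_2=2\,\tmop{Alt}_3|_{\mathcal{C}^{1}}=0$.

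I would then dispose of the two easy exactness points by pure symmetry. Injectivity of $\mathcal{C}^{0}\hookrightarrow\mathcal{C}^{1}$ is immediate. For exactness at $\mathcal{C}^{1}$, the condition $\tmop{Alt}_2 T=0$ says $T$ is symmetric in its first two arguments; combined with symmetry in arguments $2,\dots,m$ and the fact that $(1\,2)$ together with the adjacent transpositions $(2\,3),\dots,(m{-}1\,m)$ generate the full symmetric group $\mathfrak{S}_{m}$, this forces $T$ to be totally symmetric, i.e. $T\in S^{m}V^{*}=\mathcal{C}^{0}$. Hence $\ker\tmop{Alt}_2=\mathcal{C}^{0}$.

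The heart of the matter is exactness at $\mathcal{C}^{2}$, and here I would invoke the standard Koszul homotopy. Since any given tensor lies in $\Lambda^{\bullet}W_{0}\otimes S^{\bullet}W_{0}$ for a finite-dimensional subspace $W_{0}\subseteq V^{*}$, I may assume $V$ finite-dimensional. Using the canonical element $\sum_i w_i\otimes w^{i}\in V^{*}\otimes V$ I define on $\bigoplus_k\mathcal{C}^{k}\cong\Lambda^{\bullet}V^{*}\otimes S^{\bullet}V^{*}$ the Koszul differential $d=\sum_i(w_i\wedge\cdot)\otimes\partial_{w_i}$, raising exterior degree and lowering symmetric degree, together with its companion $\delta=\sum_i(w^{i}\lrcorner\,\cdot)\otimes(w_i\,\cdot)$. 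The classical computation, using the anticommutator $\{w_i\wedge\cdot,\;w^{j}\lrcorner\,\cdot\}=\delta_{ij}$ and $\partial_{w_i}(w_j\,\cdot)=\delta_{ij}+w_j\partial_{w_i}$, yields the Euler identity $d\delta+\delta d=(\deg_{\Lambda}+\deg_{S})\,\mathrm{id}=m\,\mathrm{id}$ on the total-degree-$m$ part. Because $m=p+1$ is a positive integer, hence invertible in the characteristic-zero field $\mathbb{K}$, the operator $m^{-1}\delta$ is a contracting homotopy, so the $d$-complex is exact at every interior node.

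Finally I would transfer this back to the maps of the lemma. A direct computation identifies the restriction of $\tmop{Alt}_{k+1}$ to $\mathcal{C}^{k}$ with a nonzero rational multiple of $d$ (for instance $\tmop{Alt}_2|_{\mathcal{C}^{1}}=\tfrac12 d$), the multiples being invertible since $\mathbb{K}$ has characteristic zero. Consequently $\tmop{Alt}_2$ and $\tmop{Alt}_3$ have the same kernels and images as the corresponding instances of $d$, and the exactness of the Koszul complex at $\mathcal{C}^{2}$ translates into $\ker\tmop{Alt}_3=\mathrm{im}\,\tmop{Alt}_2$, which completes the proof. The main obstacle is purely computational: establishing the homotopy identity $d\delta+\delta d=m\,\mathrm{id}$ and checking that $\tmop{Alt}_{k+1}$ and $d$ differ only by a nonzero scalar; all the remaining structural steps are formal.
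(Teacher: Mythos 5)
Your proposal is correct, but it reaches the conclusion by a genuinely different mechanism than the paper. The paper argues bare-hands: after observing that $\tmop{Alt}_3$ restricted to $\Lambda^2 V^{\ast}\otimes S^{p-1}V^{\ast}$ equals $2\tmop{Circ}$, it checks $\tmop{Circ}\circ\tmop{Alt}_2=0$ by summing two evaluations, and for the reverse inclusion it exhibits an \emph{explicit} preimage, verifying by direct index manipulation that any $\beta$ with $\tmop{Circ}\beta=0$ satisfies $\beta=C_p\tmop{Alt}_2\alpha$ with $\alpha:=\tmop{Sym}_{2,\ldots,p+1}\beta$ and $C_p=p/(p+1)!$. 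You instead embed the sequence as the total-degree-$m$ strand ($m=p+1$) of the Koszul complex and quote the contracting homotopy $d\delta+\delta d=m\,\mathrm{id}$, after identifying the unnormalized alternations with nonzero multiples of the Koszul differential $d$; that identification is sound (the relevant spaces of equivariant maps are one-dimensional by the Pieri rule, or one checks proportionality on decomposables), your complex property $\tmop{Alt}_3\circ\tmop{Alt}_2=2\,\tmop{Alt}_3|_{\mathcal{C}^1}=0$ is right, your exactness at $\mathcal{C}^1$ via the fact that $(1\,2),(2\,3),\ldots,(m-1\,m)$ generate $\mathfrak{S}_m$ is clean, and the reduction to finite-dimensional $V$ is legitimate. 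Two remarks. First, the sample scalar $\tmop{Alt}_2|_{\mathcal{C}^1}=\tfrac12 d$ is convention-dependent and, under the natural evaluation conventions, comes out as $-\tfrac1p d$ rather than $\tfrac12 d$; this is harmless since your argument only uses that the factor is nonzero, but it signals that the deferred computation does need doing. Second, and more substantively, the paper later cites \emph{the proof} of this lemma, not merely its statement (in lemma \ref{thrdReductInteg} and in the corollary on the symplectic side), precisely for the constant $C_p$ and the preimage formula $\alpha=\tmop{Sym}_{2,\ldots,p+1}\beta$; your route can recover this — the homotopy $\delta$ restricted to $\mathcal{C}^2$ is, up to a scalar, exactly $\tmop{Sym}_{2,\ldots,p+1}$ — but you would still have to pin down that scalar before your proof could substitute for the paper's in the downstream recursion for the $S_k$. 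In short: your approach buys conceptual clarity and uniform exactness at every node of the strand at once; the paper's buys the explicit normalization it actually consumes.
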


\begin{proof}
  The equality
  \begin{eqnarray*}
    S^{p + 1} V^{\ast} & = & \tmop{Ker} \left( V^{\ast} \otimes S^p V^{\ast}
    \xrightarrow{\tmop{Alt}_2 } \Lambda^2 V^{\ast} \otimes S^{p - 1} V^{\ast}
    \right),
  \end{eqnarray*}
  is obvious. We show now the equality
\begin{eqnarray}
  &&\tmop{Im} \left( V^{\ast} \otimes S^p V^{\ast}
    \xrightarrow{\tmop{Alt}_2 } \Lambda^2 V^{\ast} \otimes S^{p - 1} V^{\ast}
    \right)\nonumber \\\nonumber
    \\
    &=& \tmop{Ker} \left( \Lambda^2 V^{\ast} \otimes S^{p - 1} V^{\ast}
    \xrightarrow{\tmop{Alt}_3 } \Lambda^3 V^{\ast} \otimes S^{p - 2} V^{\ast}
    \right) . \label{Alt23}
\end{eqnarray}
We show first the inclusion $\subseteq$ in (\ref{Alt23}). We notice the
  equality
  \begin{eqnarray*}
    &&\left( \Lambda^2 V^{\ast} \otimes S^{p - 1} V^{\ast}
    \xrightarrow{\tmop{Alt}_3 } \Lambda^3 V^{\ast} \otimes S^{p - 2} V^{\ast}
    \right) \\
    \\
    &=&\left( \Lambda^2 V^{\ast} \otimes S^{p - 1} V^{\ast}
    \xrightarrow{2 \tmop{Circ}} \Lambda^3 V^{\ast} \otimes S^{p - 2} V^{\ast}
    \right) .
  \end{eqnarray*}
  Let now $\beta \assign \tmop{Alt}_2 \alpha$, with $\alpha \in V^{\ast}
  \otimes S^p V^{\ast}$. Then summing up the two equalities
  \begin{eqnarray*}
    \beta \left( v_1, v_2 ; v_3, v_4, \ldots, v_{p + 1} \right) = \alpha
    \left( v_1 ; v_2, v_3, v_4, \ldots, v_{p + 1} \right) - \alpha \left( v_2
    ; v_1, v_3, v_4, \ldots, v_{p + 1} \right),\\
    &  & \\
    - \beta \left( v_1, v_3 ; v_2, v_4, \ldots, v_{p + 1} \right) = -
    \alpha \left( v_1 ; v_3, v_2, v_4, \ldots, v_{p + 1} \right) + \alpha
    \left( v_3 ; v_1, v_2, v_4, \ldots, v_{p + 1} \right),
  \end{eqnarray*}
  we obtain
  \begin{eqnarray*}
    &  & \beta \left( v_1, v_2 ; v_3, v_4, \ldots, v_{p + 1} \right) - \beta
    \left( v_1, v_3 ; v_2, v_4, \ldots, v_{p + 1} \right)\\
    &  & \\
    & = & -\; \alpha \left( v_2 ; v_3, v_1, v_4, \ldots, v_{p + 1} \right) +
    \alpha \left( v_3 ; v_2, v_1, v_4, \ldots, v_{p + 1} \right)\\
    &  & \\
    & = & - \;\beta \left( v_2, v_3 ; v_1, v_4, \ldots, v_{p + 1} \right),
  \end{eqnarray*}
  which rewrites as
  \begin{eqnarray*}
    &&\beta \left( v_1, v_2 ; v_3, v_4, \ldots, v_{p + 1} \right) \\
    \\
    &+&
    \beta \left( v_2, v_3 ; v_1, v_4, \ldots, v_{p + 1} \right)
    \\
    \\
    &+& 
    \beta \left(
    v_3, v_1 ; v_2, v_4, \ldots, v_{p + 1} \right) \;=\; 0\,,
  \end{eqnarray*}
  i.e. $\tmop{Circ} \beta = 0$, which shows the inclusion $\subseteq$ in
  (\ref{Alt23}). In order to show the reverse inclusion in (\ref{Alt23}) we
  consider $\beta \in \Lambda^2 V^{\ast} \otimes S^{p - 1} V^{\ast}$ with
  $\tmop{Circ} \beta = 0$ and we will prove that $\beta = C_p \tmop{Alt}_2 \alpha$, with 
  $$\alpha \assign \tmop{Sym}_{2, \ldots, p + 1} \beta \in
  V^{\ast} \otimes S^p V^{\ast}\,,
  $$ and with $C_p \assign p / \left( p + 1
  \right) !$. Indeed
  \begin{eqnarray*}
    \frac{1}{\left( p - 1 \right) !} \alpha \left( v_1 ; v_2, \ldots, v_{p +
    1} \right) & = & \sum_{j = 2}^{p + 1} \beta \left( v_1, v_j ; v_2, \ldots,
    \hat{v}_j, \ldots, v_{p + 1} \right)
  \end{eqnarray*}
  and
  \begin{eqnarray*}
    &  & \frac{1}{\left( p - 1 \right) !}  \left( \tmop{Alt}_2 \alpha \right)
    \left( v_1, v_2 ; \ldots, v_{p + 1} \right)\\
    &  & \\
    & = & \frac{1}{\left( p - 1 \right) !} \alpha \left( v_1 ; v_2, \ldots,
    v_{p + 1} \right) - \frac{1}{\left( p - 1 \right) !} \alpha \left( v_2 ;
    v_1, \hat{v}_2, \ldots, v_{p + 1} \right)\\
    &  & \\
    & = & \sum_{j = 2}^{p + 1} \beta \left( v_1, v_j ; v_2, \ldots,
    \hat{v}_j, \ldots, v_{p + 1} \right)\\
    &  & \\
    & - & \sum_{{\begin{array}{l}
      j = 1\\
      j \neq 2
    \end{array}}}^{p + 1} \beta \left( v_2, v_j ; v_1, \hat{v}_2, \ldots,
    \hat{v}_j, \ldots, v_{p + 1} \right)\\
    &  & \\
    & = & \beta \left( v_1, v_2 ; v_3, \ldots, v_{p + 1} \right) + \sum_{j =
    3}^{p + 1} \beta \left( v_1, v_j ; v_2, v_3, \ldots, \hat{v}_j, \ldots,
    v_{p + 1} \right)\\
    &  & \\
    & + & \beta \left( v_1, v_2 ; v_3, \ldots, v_{p + 1} \right) + \sum_{j =
    3}^{p + 1} \beta \left( v_j, v_2 ; v_1, \hat{v}_2, v_3, \ldots, \hat{v}_j,
    \ldots, v_{p + 1} \right)\,.
  \end{eqnarray*}
  Using the circular identity $\tmop{Circ} \beta = 0$, we obtain
  \begin{eqnarray*}
    &  & \frac{1}{\left( p - 1 \right) !}  \left( \tmop{Alt}_2 \alpha \right)
    \left( v_1, v_2 ; \ldots, v_{p + 1} \right)\\
    &  & \\
    & = & 2\, \beta \left( v_1, v_2 ; v_3, \ldots, v_{p + 1} \right) - \sum_{j
    = 3}^{p + 1} \beta \left( v_2, v_1 ; v_j, v_3, \ldots, \hat{v}_j, \ldots,
    v_{p + 1} \right) .
  \end{eqnarray*}
  This combined with the fact that $\beta \in \Lambda^2 V^{\ast} \otimes S^{p
  - 1} V^{\ast}$ implies
  \begin{eqnarray*}
    &  &  \frac{1}{\left( p - 1 \right) !}  \left( \tmop{Alt}_2 \alpha
    \right) \left( v_1, v_2 ; \ldots, v_{p + 1} \right)\\
    &  & \\
    & = & 2\, \beta \left( v_1, v_2 ; v_3, \ldots, v_{p + 1} \right) + \left( p
    - 1 \right) \beta \left( v_1, v_2 ; v_3, \ldots, v_{p + 1} \right)\\
    &  & \\
    & = & \left( p + 1 \right) \beta \left( v_1, v_2 ; \ldots, v_{p + 1}
    \right),
  \end{eqnarray*}
  which shows the required identity.
\end{proof}

A direct consequence of the proof of lemma \ref{exactSequence} is the
following fact.
\begin{corollary}
  \label{KeylmRmSm}Let $R \in C^{\infty} \left( M, \Lambda^2 T^{\ast}_M
  \otimes_{_{\mathbbm{R}}} T^{\ast}_M \otimes_{_{\mathbbm{R}}} \mathbbm{C}T_M
  \right)$ satisfying the algebraic Bianchi identity. Then a tensor $S \in
  C^{\infty} \left( M, T^{\ast}_M \otimes_{_{\mathbbm{R}}} S^2
  T^{\ast}_M \otimes_{_{\mathbbm{R}}} \mathbbm{C}T_M \right)$ satisfies $3 R =
  \tmop{Alt}_2 S$ if and only if $S = \tmop{Sym}_{2, 3} R + \sigma$, with $\sigma \in
  C^{\infty} ( M, S^3 T^{\ast}_M \otimes_{_{\mathbbm{R}}}
  \mathbbm{C}T_M)$.
\end{corollary}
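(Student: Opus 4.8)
The plan is to recognize that the statement is purely a fiberwise piece of linear algebra, applied to the sections at each point $p \in M$ with $V = T_{M, p}$, and that the entire content is already contained in the computation carried out inside the proof of Lemma \ref{exactSequence}. Concretely, the crux is the single identity $\tmop{Alt}_2 \tmop{Sym}_{2, 3} R = 3 R$, valid for every $R$ satisfying the algebraic Bianchi identity; once this is in hand, both implications of the corollary reduce to elementary symmetry bookkeeping. I would first extract this identity, then dispatch the two directions.

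To extract the identity I would specialize the proof of Lemma \ref{exactSequence} to $p = 2$. There $\beta$ is taken in $\Lambda^2 V^{\ast} \otimes S^{p - 1} V^{\ast}$ with $\tmop{Circ} \beta = 0$, and the computation shows $\tfrac{1}{(p - 1)!} \tmop{Alt}_2 ( \tmop{Sym}_{2, \ldots, p + 1} \beta ) = (p + 1) \beta$. Setting $\beta := R$, the hypothesis $R \in \Lambda^2 T^{\ast}_M \otimes_{_{\mathbbm{R}}} T^{\ast}_M \otimes_{_{\mathbbm{R}}} \mathbbm{C}T_M$ with the algebraic Bianchi identity is exactly the case $p = 2$ with $\tmop{Circ} R = 0$, and $(p - 1)! = 1$, $p + 1 = 3$, giving $\tmop{Alt}_2 \tmop{Sym}_{2, 3} R = 3 R$. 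I would note explicitly that the argument of Lemma \ref{exactSequence} is insensitive to the $\mathbbm{C}T_M$-valued target: the operators $\tmop{Alt}$ and $\tmop{Sym}$ act only on the covariant $T^{\ast}_M$-entries, treating the value in $\mathbbm{C}T_M$ as an inert coefficient, so the scalar-valued computation applies verbatim.

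For the two directions I would argue as follows. For the ``if'' direction, given $S = \tmop{Sym}_{2, 3} R + \sigma$ with $\sigma \in C^{\infty}(M, S^3 T^{\ast}_M \otimes_{_{\mathbbm{R}}} \mathbbm{C}T_M)$, applying $\tmop{Alt}_2$ yields $\tmop{Alt}_2 S = \tmop{Alt}_2 \tmop{Sym}_{2, 3} R + \tmop{Alt}_2 \sigma = 3 R$, since $\sigma$ is symmetric in its first two entries and hence $\tmop{Alt}_2 \sigma = 0$. For the ``only if'' direction, I would set $\sigma := S - \tmop{Sym}_{2, 3} R$ and verify full symmetry: both $S$ (which lies in $T^{\ast}_M \otimes S^2 T^{\ast}_M \otimes \mathbbm{C}T_M$) and $\tmop{Sym}_{2, 3} R$ are symmetric in the second and third entries, so $\sigma$ is too; and $\tmop{Alt}_2 \sigma = \tmop{Alt}_2 S - \tmop{Alt}_2 \tmop{Sym}_{2, 3} R = 3 R - 3 R = 0$, so $\sigma$ is symmetric in the first two entries. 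Since the transpositions $(1\,2)$ and $(2\,3)$ generate the full symmetric group on three letters, $\sigma \in C^{\infty}(M, S^3 T^{\ast}_M \otimes_{_{\mathbbm{R}}} \mathbbm{C}T_M)$, which is the claim. There is no genuine obstacle here; the only points needing care are the correct normalizing constant $3$ coming out of the $p = 2$ specialization and the observation that symmetry in $(1,2)$ together with symmetry in $(2,3)$ forces total symmetry.
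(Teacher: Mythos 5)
Your proposal is correct and takes essentially the same route as the paper, which offers no separate proof but declares the corollary ``a direct consequence of the proof of lemma \ref{exactSequence}'': specializing that proof to $p = 2$ yields precisely your identity $\tmop{Alt}_2 \tmop{Sym}_{2, 3} R = 3 R$ (the case $(p-1)! = 1$, $p + 1 = 3$), and the two implications then follow by the same symmetry bookkeeping. Your transposition-generation argument for the total symmetry of $\sigma$ merely makes explicit the exactness statement $\tmop{Ker} \left( \tmop{Alt}_2 \right) = S^3 V^{\ast}$ that the paper's lemma records as obvious, so the two arguments coincide in substance.
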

We infer by corollary \ref{KeylmRmSm} that the equation (\ref{ZotEq}) is satisfied by
$S_2 = S^0_2 + \sigma_2$, with
\begin{equation}
  \label{ExprS2} S^0_2 \left( \xi_1, \xi_2, \xi_3 \right) = \frac{i}{6} 
  \left[ R^{\nabla^{S_1}} \left( \xi_1, \xi_2 \right) \xi_3 + R^{\nabla^{S_1}}
  \left( \xi_1, \xi_3 \right) \xi_2 \right],
\end{equation}
and with $\sigma_2 \in C^{\infty} \left( M, S^3 T^{\ast}_M
\otimes_{_{\mathbbm{R}}} \mathbbm{C}T_M \right)$. We consider now the equation
(\ref{EqSk}) for $k = 2$, which writes as
\begin{equation}
  \label{EqS2}  \left[ d_1^{\nabla^{S_1}} S_2 + 3 i \tmop{Alt}_2 S_3 \right]
  \left( \xi_1, \xi_2, \eta^2 \right) = 0 .
\end{equation}
The fact that the tensor 
$$
d_1^{\nabla^{S_1}} S_2 + 3 i \tmop{Alt}_2 S_3\,,
$$ is
symmetric in the last two variables implies that the equation (\ref{EqS2}) is
equivalent to the equation
\[ d_1^{\nabla^{S_1}} S_2 + 3 i \tmop{Alt}_2 S_3 = 0\,, \]
that we can rewrite under the form
\begin{equation}
  \label{EqS2gn} d_1^{\nabla^{S_1}} S^0_2 + 3 i \tmop{Alt}_2 \hat{S}_3 = 0\,,
\end{equation}
with 
$$
\hat{S}_3 \assign S_3 \noplus - \frac{i}{3} \nabla^{S_1} \sigma_2\,.
$$
Then
using the expression (\ref{ExprS2}) we can rewrite equation (\ref{EqS2gn}) in
the explicit form
\begin{eqnarray}
  &  & \nabla^{S_1}_{\xi_1} R^{\nabla^{S_1}} \left( \xi_2, \xi_3 \right)
  \xi_4 + \nabla^{S_1}_{\xi_1} R^{\nabla^{S_1}} \left( \xi_2, \xi_4 \right)
  \xi_3\nonumber
  \\\nonumber
  &  & \\
  & - & \nabla^{S_1}_{\xi_2} R^{\nabla^{S_1}} \left( \xi_1, \xi_3 \right)
  \xi_4 - \nabla^{S_1}_{\xi_2} R^{\nabla^{S_1}} \left( \xi_1, \xi_4 \right)
  \xi_3\nonumber
  \\\nonumber
  &  & \\
  & = & - \;18 \left[ \hat{S}_3 \left( \xi_1, \xi_2, \xi_3, \xi_4 \right) -
  \hat{S}_3 \left( \xi_2, \xi_1, \xi_3, \xi_4 \right) \right] .\label{Rm3Sm}
\end{eqnarray}
We notice that the fact that the complex connection $\nabla^{S_1}$ is torsion
free implies that the tensor $\rho$ given by $\rho \left( \xi_1, \xi_2, \xi_3,
\xi_4 \right) \assign \nabla^{S_1}_{\xi_1} R^{\nabla^{S_1}} \left( \xi_2,
\xi_3 \right) \xi_4$ satisfies the circular identity with respect to the first
and last three entries. Moreover $\rho$ is obviously skew-symmetric with
respect to the variables $\xi_2, \xi_3$.

\begin{lemma}
  \label{AltSymCurvature}Let $\rho$ be a $4$-linear form which satisfies the
  circular identity with respect to the first and last three entries and which
  is skew-symmetric with respect to the second and third variables. Then a
  $4$-linear form $S$ which is symmetric with respect to the last three
  entries satisfies the equation
  \begin{equation}
    \label{EqAlSm} \tmop{Alt}_2 [8 \tmop{Sym}_{3, 4} \rho - S] = 0\,,
  \end{equation}
  if and only if
  \begin{eqnarray*}
    S & = & - 2 \tmop{Sym}_{2, 3, 4} \rho_2 + \sigma\\
    &  & \\
    & = & 2 \tmop{Sym}_{2, 3, 4} \rho_3 + \sigma\,,
  \end{eqnarray*}
  with $\rho_2 \left( \xi_1, \xi_2, \xi_3, \xi_4 \right) : = \rho \left(
  \xi_2, \xi_1, \xi_3, \xi_4 \right)$, with $\rho_3 \left( \xi_1, \xi_2,
  \xi_3, \xi_4 \right) : = \rho \left( \xi_2, \xi_3, \xi_1, \xi_4 \right)$,
  for all $\xi_1, \xi_2, \xi_3, \xi_4 \in T_{M, \pi \left( \xi_1 \right)}$ and
  with $\sigma$ a $4$-linear form which is symmetric with respect to all its
  entries. 
\end{lemma}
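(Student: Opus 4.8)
The plan is to reduce the statement to the exact sequence of Lemma~\ref{exactSequence}, exactly as Corollary~\ref{KeylmRmSm} did in the $3$-linear case. First I would rewrite the hypothesis $\tmop{Alt}_2[8\tmop{Sym}_{3,4}\rho - S]=0$ as $\tmop{Alt}_2 S = 8X$, where $X\assign\tmop{Alt}_2\tmop{Sym}_{3,4}\rho$. Since $S$ is symmetric in its last three entries it lies in $V^{\ast}\otimes S^3V^{\ast}$ (tensored with the passive coefficient space $\mathbbm{C}T_M$), while $X$ is antisymmetric in the first two slots and symmetric in the last two, so $X\in\Lambda^2V^{\ast}\otimes S^2V^{\ast}$. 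Thus everything takes place along the map $\tmop{Alt}_2:V^{\ast}\otimes S^3V^{\ast}\longrightarrow\Lambda^2V^{\ast}\otimes S^2V^{\ast}$, whose kernel is $S^4V^{\ast}$ and whose image is $\tmop{Ker}\tmop{Alt}_3=\{\beta:\tmop{Circ}\beta=0\}$, by Lemma~\ref{exactSequence} with $p=3$.

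The whole lemma then rests on the single tensorial identity
\[ \tmop{Alt}_2\bigl(-2\,\tmop{Sym}_{2,3,4}\rho_2\bigr)=8X. \qquad(\star) \]
Granting $(\star)$, both implications are immediate. If $S$ solves $\tmop{Alt}_2 S=8X$, then $\tmop{Alt}_2(S+2\tmop{Sym}_{2,3,4}\rho_2)=8X-8X=0$, and since $S+2\tmop{Sym}_{2,3,4}\rho_2\in V^{\ast}\otimes S^3V^{\ast}$ the kernel computation gives $S+2\tmop{Sym}_{2,3,4}\rho_2=\sigma\in S^4V^{\ast}$, i.e. $\sigma$ totally symmetric. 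Conversely, for any totally symmetric $\sigma$ one has $\tmop{Alt}_2\sigma=0$, so $S=-2\tmop{Sym}_{2,3,4}\rho_2+\sigma$ satisfies $\tmop{Alt}_2 S=\tmop{Alt}_2(-2\tmop{Sym}_{2,3,4}\rho_2)=8X$ by $(\star)$.

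To establish $(\star)$ I would reuse the constructive formula hidden in the proof of Lemma~\ref{exactSequence}: for $\beta\in\Lambda^2V^{\ast}\otimes S^2V^{\ast}$ with $\tmop{Circ}\beta=0$ one has $\tmop{Alt}_2\tmop{Sym}_{2,3,4}\beta=8\beta$ (the normalizing constant $(p+1)!/p$ equals $8$ for $p=3$). Taking $\beta=X$ splits $(\star)$ into two checks. The first, $\tmop{Sym}_{2,3,4}X=-2\tmop{Sym}_{2,3,4}\rho_2$, is easy: the skew-symmetry of $\rho$ in its second and third entries forces $\tmop{Sym}_{2,3,4}\rho=0$, so symmetrizing the last three slots annihilates the two terms of $X$ carrying $\xi_1$ in the first slot and doubles the two terms carrying $\xi_2$ there, leaving exactly $-2\tmop{Sym}_{2,3,4}\rho_2$. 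The second check, $\tmop{Circ}X=0$, guarantees that $8X$ lies in the image of $\tmop{Alt}_2$ and is the crux of the proof.

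The main obstacle is therefore the identity $\tmop{Circ}X=0$. Expanding $X$ and applying $\tmop{Circ}$ to the first three arguments, the three terms that keep $\xi_4$ in the last slot cancel immediately through the circular identity of $\rho$ in its first three entries together with the skew-symmetry in the second and third; the remaining terms, in which $\xi_4$ has been moved into the third slot, cancel only after invoking the circular identity of $\rho$ in its last three entries, which is precisely the hypothesis coupling the fourth slot to the others. A bookkeeping that ignores this last circular identity would wrongly leave a nonzero remainder, so it is genuinely indispensable. Finally the second displayed form of $S$ needs no extra work: the skew-symmetry of $\rho$ in the second and third entries gives $\rho_3=-\rho_2$, whence $2\tmop{Sym}_{2,3,4}\rho_3=-2\tmop{Sym}_{2,3,4}\rho_2$ and the two expressions for $S$ coincide.
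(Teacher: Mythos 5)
Your proposal is correct and follows essentially the same route as the paper's proof: both reduce the claim to Lemma \ref{exactSequence} with $p=3$ (so that $\tmop{Alt}_2\tmop{Sym}_{2,3,4}\beta=8\beta$ whenever $\tmop{Circ}\beta=0$), establish the key fact $\tmop{Circ}\,\tmop{Alt}_2\tmop{Sym}_{3,4}\rho=0$ using the skew-symmetry together with both circular identities of $\rho$, and compute $\tmop{Sym}_{2,3,4}\tmop{Alt}_2\tmop{Sym}_{3,4}\rho=-2\,\tmop{Sym}_{2,3,4}\rho_2$ from $\tmop{Sym}_{2,3,4}\rho=0$, with $\rho_3=-\rho_2$ giving the second form. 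Your cancellation pattern for $\tmop{Circ}X=0$ and the normalizing constant $8=(p+1)!/p$ both check out, so packaging the argument through the single identity $(\star)$ is merely a cosmetic reorganization of the paper's proof.
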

\begin{proof}
  We observe first that the assumptions on $\rho$ imply $\tmop{Circ}
  \tmop{Alt}_2 \tmop{Sym}_{3, 4} \rho = 0$. Indeed
  \begin{eqnarray*}
    (\tmop{Alt}_2 \tmop{Sym}_{3, 4} \rho) \left( \xi_1, \xi_2, \xi_3, \xi_4
    \right) & = & \rho \left( \xi_1, \xi_2, \xi_3, \xi_4 \right) + \rho \left(
    \xi_1, \xi_2, \xi_4, \xi_3 \right)\\
    &  & \\
    & - & \rho \left( \xi_2, \xi_1, \xi_3, \xi_4 \right) - \rho \left( \xi_2,
    \xi_1, \xi_4, \xi_3 \right),
  \end{eqnarray*}
  and
  \begin{eqnarray*}
    &  & (\tmop{Circ} \tmop{Alt}_2 \tmop{Sym}_{3, 4} \rho) \left( \xi_1,
    \xi_2, \xi_3, \xi_4 \right)\\
    &  & \\
    & = & \rho \left( \xi_1, \xi_2, \xi_3, \xi_4 \right)_{_1} + \rho \left(
    \xi_1, \xi_2, \xi_4, \xi_3 \right)_{_4} - \rho \left( \xi_2, \xi_1, \xi_3,
    \xi_4 \right)_{_2} - \rho \left( \xi_2, \xi_1, \xi_4, \xi_3 \right)_{_5}\\
    &  & \\
    & + & \rho \left( \xi_2, \xi_3, \xi_1, \xi_4 \right)_{_2} + \rho \left(
    \xi_2, \xi_3, \xi_4, \xi_1 \right)_{_5} - \rho \left( \xi_3, \xi_2, \xi_1,
    \xi_4 \right)_{_3} - \rho \left( \xi_3, \xi_2, \xi_4, \xi_1 \right)_{_6}\\
    &  & \\
    & + & \rho \left( \xi_3, \xi_1, \xi_2, \xi_4 \right)_{_3} + \rho \left(
    \xi_3, \xi_1, \xi_4, \xi_2 \right)_{_6} - \rho \left( \xi_1, \xi_3, \xi_2,
    \xi_4 \right)_{_1} - \rho \left( \xi_1, \xi_3, \xi_4, \xi_2 \right)_{_4},
  \end{eqnarray*}
  where we denote by $\rho \left( \cdot, \cdot, \cdot, \cdot \right)_{_j}$ the
  terms we group together. Using the assumption $\rho$ is skew-symmetric with
  respect to the second and third variables we infer
  \begin{eqnarray*}
    &  & (\tmop{Circ} \tmop{Alt}_2 \tmop{Sym}_{3, 4} \rho) \left( \xi_1,
    \xi_2, \xi_3, \xi_4 \right)\\
    &  & \\
    & = & 2 \rho \left( \xi_1, \xi_2, \xi_3, \xi_4 \right)_{_1} + \rho \left(
    \xi_1, \xi_2, \xi_4, \xi_3 \right)_{_4} + \rho \left( \xi_2, \xi_4, \xi_1,
    \xi_3 \right)_{_5}\\
    &  & \\
    & + & 2 \rho \left( \xi_2, \xi_3, \xi_1, \xi_4 \right)_{_1} + \rho \left(
    \xi_2, \xi_3, \xi_4, \xi_1 \right)_{_5} + \rho \left( \xi_3, \xi_4, \xi_2,
    \xi_1 \right)_{_6}\\
    &  & \\
    & + & 2 \rho \left( \xi_3, \xi_1, \xi_2, \xi_4 \right)_{_1} + \rho \left(
    \xi_3, \xi_1, \xi_4, \xi_2 \right)_{_6} + \rho \left( \xi_1, \xi_4, \xi_3,
    \xi_2 \right)_{_4} .
  \end{eqnarray*}
Using the circular assumptions on $\rho$ we infer
  \begin{eqnarray*}
    &  & (\tmop{Circ} \tmop{Alt}_2 \tmop{Sym}_{3, 4} \rho) \left( \xi_1,
    \xi_2, \xi_3, \xi_4 \right)\\
    &  & \\
    & = & - \rho \left( \xi_1, \xi_3, \xi_2, \xi_4 \right) - \rho \left(
    \xi_2, \xi_1, \xi_3, \xi_4 \right) - \rho \left( \xi_3, \xi_2, \xi_1,
    \xi_4 \right)\\
    &  & \\
    & = & 0 \,.
  \end{eqnarray*}
  Then by the proof of lemma \ref{exactSequence} in the case $p = 3$, we infer
  that a $4$-linear form $S$ which is symmetric with respect to the last three
  entries satisfies the equation (\ref{EqAlSm}) if and only if
  \begin{eqnarray*}
    S & = & \tmop{Sym}_{2, 3, 4} \tmop{Alt}_2 \tmop{Sym}_{3, 4} \rho + \sigma\,,
  \end{eqnarray*}
  with $\sigma$ any $4$-linear form which is symmetric with respect to all its
  entries, satisfies (\ref{EqAlSm}). We write now
  \begin{eqnarray*}
    &  & (\tmop{Sym}_{2, 3, 4} \tmop{Alt}_2 \tmop{Sym}_{3, 4} \rho) \left(
    \xi_1, \xi_2, \xi_3, \xi_4 \right)\\
    &  & \\
    & = & \rho \left( \xi_1, \xi_2, \xi_3, \xi_4 \right)_{_1} + \rho \left(
    \xi_1, \xi_2, \xi_4, \xi_3 \right)_{_2} - \rho \left( \xi_2, \xi_1, \xi_3,
    \xi_4 \right)_{_3} - \rho \left( \xi_2, \xi_1, \xi_4, \xi_3 \right)_{_4}\\
    &  & \\
    & + & \rho \left( \xi_1, \xi_2, \xi_4, \xi_3 \right)_{_1} + \rho \left(
    \xi_1, \xi_2, \xi_3, \xi_4 \right)_{_2} - \rho \left( \xi_2, \xi_1, \xi_4,
    \xi_3 \right)_{_4} - \rho \left( \xi_2, \xi_1, \xi_3, \xi_4 \right)_{_3}\\
    &  & \\
    & + & \rho \left( \xi_1, \xi_3, \xi_2, \xi_4 \right)_{_1} + \rho \left(
    \xi_1, \xi_3, \xi_4, \xi_2 \right)_{_2} - \rho \left( \xi_3, \xi_1, \xi_2,
    \xi_4 \right)_{_5} - \rho \left( \xi_3, \xi_1, \xi_4, \xi_2 \right)_{_6}\\
    &  & \\
    & + & \rho \left( \xi_1, \xi_3, \xi_4, \xi_2 \right)_{_1} + \rho \left(
    \xi_1, \xi_3, \xi_2, \xi_4 \right)_{_2} - \rho \left( \xi_3, \xi_1, \xi_4,
    \xi_2 \right)_{_6} - \rho \left( \xi_3, \xi_1, \xi_2, \xi_4 \right)_{_5}\\
    &  & \\
    & + & \rho \left( \xi_1, \xi_4, \xi_2, \xi_3 \right)_{_1} + \rho \left(
    \xi_1, \xi_4, \xi_3, \xi_2 \right)_{_2} - \rho \left( \xi_4, \xi_1, \xi_2,
    \xi_3  \right)_{_7} - \rho \left( \xi_4, \xi_1, \xi_3, \xi_2
    \right)_{_8}\\
    &  & \\
    & + & \rho \left( \xi_1, \xi_4, \xi_3, \xi_2 \right)_{_1} + \rho \left(
    \xi_1, \xi_4, \xi_2, \xi_3 \right)_{_2} - \rho \left( \xi_4, \xi_1, \xi_3,
    \xi_2 \right)_{_8} - \rho \left( \xi_4, \xi_1, \xi_2, \xi_3 \right)_{_7} .
  \end{eqnarray*}
  The fact that $\rho$ is skew-symmetric with respect to the second and third
  variables implies that $\tmop{Sym}_{2, 3, 4} \rho = 0$. We infer
  \begin{eqnarray*}
    &  & (\tmop{Sym}_{2, 3, 4} \tmop{Alt}_2 \tmop{Sym}_{3, 4} \rho) \left(
    \xi_1, \xi_2, \xi_3, \xi_4 \right)\\
    &  & \\
    & = & - 2 \rho \left( \xi_2, \xi_1, \xi_3, \xi_4 \right) - 2 \rho \left(
    \xi_2, \xi_1, \xi_4, \xi_3 \right)\\
    &  & \\
    & - & 2 \rho \left( \xi_3, \xi_1, \xi_2, \xi_4 \right) - 2 \rho \left(
    \xi_3, \xi_1, \xi_4, \xi_2 \right)\\
    &  & \\
    & - & 2 \rho \left( \xi_4, \xi_1, \xi_2, \xi_3  \right) - 2 \rho \left(
    \xi_4, \xi_1, \xi_3, \xi_2 \right)\\
    &  & \\
    & = & 2 \rho \left( \xi_2, \xi_3, \xi_1, \xi_4 \right) + 2 \rho \left(
    \xi_2, \xi_4, \xi_1, \xi_3 \right)\\
    &  & \\
    & + & 2 \rho \left( \xi_3, \xi_2, \xi_1, \xi_4 \right) + 2 \rho \left(
    \xi_3, \xi_4, \xi_1, \xi_2 \right)\\
    &  & \\
    & + & 2 \rho \left( \xi_4, \xi_2, \xi_1, \xi_3  \right) + 2 \rho \left(
    \xi_4, \xi_3, \xi_1, \xi_2 \right),
  \end{eqnarray*}
  which shows the required expressions for $S$.
\end{proof}

By the equation (\ref{Rm3Sm}) we can apply lemma \ref{AltSymCurvature} to the
tensor $\rho \assign \nabla^{S_1} R^{\nabla^{S_1}}$. We infer the equation
\begin{equation}
  \label{EndInteg} S_3 = \frac{1}{4! 3} \tmop{Sym}_{2, 3, 4}  (
  \nabla^{S_1} R^{\nabla^{S_1}})_2 \noplus + \frac{i}{3} \nabla^{S_1}
  \sigma_2 + \sigma_3\,,
\end{equation}
We deduce that the equation (\ref{EqS2gn}) is equivalent to the equation
(\ref{EndInteg}). This concludes the proof of the proposition \ref{MainProp}
thanks to lemma \ref{IntermediateInteg}.

\section{Third reduction of the integrability equations and proof of the main
theorem}

In this section we will prove the following result.

\begin{lemma}
  \label{thrdReductInteg}Under the assumptions of the theorem
  \ref{Maintheorem} the $M$-totally real almost complex structure $J_A$ is
  integrable over $U$ if and only if
  \begin{eqnarray*}
    S_1 & \in & C^{\infty} \left( M, S^2 T^{\ast}_M
    \otimes_{_{\mathbbm{R}}} \mathbbm{C}T_M \right), \hspace{1em} \text{i.e.
    $\nabla^{S_1}$ is torsion free},\\
    &  & \\
    S_2 & = & S^0_2 + \sigma_2,\\
    &  & \\
    S^0_2 \left( \xi_1, \xi_2, \xi_3 \right) & \assign & \frac{i}{6}  \left[
    R^{\nabla^{S_1}} \left( \xi_1, \xi_2 \right) \xi_3 + R^{\nabla^{S_1}}
    \left( \xi_1, \xi_3 \right) \xi_2 \right],\\
    &  & \\
    \sigma_2 & \in & C^{\infty} \left( M_{_{_{_{}}}}, S^3 T^{\ast}_M
    \otimes_{_{\mathbbm{R}}} \mathbbm{C}T_M \right),
  \end{eqnarray*}
  and for all $k \geqslant 3$,
  \begin{eqnarray*}
    S_k & = & \frac{i}{k} \nabla^{S_1} \sigma_{k - 1} + \frac{i}{\left( k + 1\right) !} 
    \tmop{Sym}_{2, \ldots, k + 1} \beta_{k - 1} + \sigma_k\,,\\
    &  & \\
    \sigma_k & \in & C^{\infty} \left( M_{_{_{_{}}}}, S^{k + 1} T^{\ast}_M
    \otimes_{_{\mathbbm{R}}} \mathbbm{C}T_M \right),\\
    &  & \\
    \beta_k & \assign & \frac{i}{k} d_1^{\nabla^{S_1}} \nabla^{S_1} \sigma_{k
    - 1} + \frac{i}{\left( k + 1 \right) !} d_1^{\nabla^{S_1}} \tmop{Sym}_{2, \ldots, k + 1}
    \beta_{k - 1} \\
    &  & \\
    & + & \frac{1}{k!} \tmop{Sym}_{3, \ldots, k + 2} \left( \sum_{p = 2}^{k -
    1} p S_p \wedge_1 S_{k - p + 1} \right),\\
    &  & \\
    \beta_2 & \assign & - \,\frac{i}{3}  ( \nabla^{S_1} R^{\nabla^{S_1}})_2\,,\\
    &  & \\
    \tmop{Circ} \beta_k & = & 0 \,.
  \end{eqnarray*}
\end{lemma}
\begin{proof}
  We show that the statement of proposition \ref{MainProp} is equivalent to
  the statement of lemma \ref{thrdReductInteg}. We show indeed by induction on
  $k \geqslant 3$ the following statement.
\begin{Statement}  
The tensors $S_h$, $h = 3, \ldots, k+1$, satisfy the equations
  \begin{equation}
    \label{ExtH}  \left[ d_1^{\nabla^{S_1}} S_h + \sum_{p = 2}^{h - 1} p S_p
    \wedge_1 S_{h - p + 1} \noplus \noplus \noplus + i \left( h + 1 \right)
    \tmop{Alt}_2 S_{h + 1} \right] \left( \xi_1, \xi_2, \eta^h \right) = 0\,,
  \end{equation}
for all $h = 3, \ldots, k$, for all $\xi_1, \xi_2, \eta \in T_{M, \pi \left( \eta \right)}$ and 
\begin{eqnarray*}
    S_3 & = & \frac{i}{3} \nabla^{S_1} \sigma_2 + \frac{1}{4! 3}
    \tmop{Sym}_{2, 3, 4}  ( \nabla^{S_1} R^{\nabla^{S_1}} )_2
    \noplus + \sigma_3\,,
  \end{eqnarray*}  
with 
  $
  \sigma_3 \in
  C^{\infty} \left( M, S^4 T^{\ast}_M \otimes_{_{\mathbbm{R}}}
  \mathbbm{C}T_M \right)
  $, if and only if the tensors $S_h$ satisfy for all $h = 3,
  \ldots, k + 1$, the identities
  \begin{equation}
    \label{SHexpr} S_h = \frac{i}{h} \nabla^{S_1} \sigma_{h - 1} + \frac{i}{\left( h + 1 \right) !} \tmop{Sym}_{2, \ldots, h + 1} \beta_{h - 1} + \sigma_h\,,
  \end{equation}
  with $\sigma_h \in C^{\infty} \left( M, S^{h + 1} T^{\ast}_M
  \otimes_{_{\mathbbm{R}}} \mathbbm{C}T_M \right)$ and where for all $r = 3,
  \ldots, k$,
 \begin{eqnarray*}
\beta_r &\assign& \frac{i}{r} d_1^{\nabla^{S_1}} \nabla^{S_1} \sigma_{r - 1} +
     \frac{i}{\left( r + 1 \right) !} d_1^{\nabla^{S_1}} \tmop{Sym}_{2, \ldots, r + 1} \beta_{r
     - 1} 
     \\
     \\
     &+& \frac{1}{r!} \tmop{Sym}_{3, \ldots, r + 2} \left( \sum_{p = 2}^{r
     - 1} p S_p \wedge_1 S_{r - p + 1} \right),  
 \end{eqnarray*}
  with $\beta_2 \assign - \frac{i}{3}  ( \nabla^{S_1} R^{\nabla^{S_1}})_2$ satisfies the equation $\tmop{Circ} \beta_r = 0$.
\end{Statement}  
The statement 1 follows directly from the following fact.
\begin{Fact}
Let $S_h$, for some $h = 3, \ldots, k$, be the tensor
  given by (\ref{SHexpr}). Then the tensor $S_{h + 1}$ satisfies the equation
  (\ref{ExtH}) if and only if $S_{h + 1}$ satisfies the identity
  (\ref{SHexpr}), with $h$ replaced by $h + 1$ and $\beta_h$ satisfies the
  equation $\tmop{Circ} \beta_h = 0$.
\end{Fact}  
In order to show the fact 1 we observe first that (\ref{ExtH}) rewrites as
  $$
    d_1^{\nabla^{S_1}} S_h + \frac{1}{h!} \tmop{Sym}_{3, \ldots, h + 2} \left(
    \sum_{p = 2}^{h - 1} p S_p \wedge_1 S_{h - p + 1}  \right) \noplus \noplus
    \noplus + i \left( h + 1 \right) \tmop{Alt}_2 S_{h + 1} = 0\, .
 $$
  Using the expression (\ref{SHexpr}) for $S_h$ and the
  definition of $\beta_h$, we can rewrite the previous identity as
  \begin{equation}
    \label{EqBeta} \beta_h = - \tmop{Alt}_2 \left[ \nabla^{S_1} \sigma_h + i
    \left( h + 1 \right)S_{h + 1} \right] .
  \end{equation}
  By the proof of lemma \ref{exactSequence} we deduce $\tmop{Circ} \beta_h =
  0$ and
  $$
- \nabla^{S_1} \sigma_h - i \left( h + 1 \right)_{_{_{_{}}}} S_{h + 1}  =
     C_{h + 1} \tmop{Sym}_{2, \ldots, h + 2} \beta_h - i \left( h + 1
    \right) \sigma_{h + 1} \,.  
  $$
Therefore the identity (\ref{EqBeta}) is equivalent to; $\tmop{Circ} \beta_h
  = 0$ and $S_{h + 1}$ satisfies (\ref{SHexpr}), with $h$ replaced by $h + 1$.
  This concludes the proof fact 1. We infer the required conclusion of lemma \ref{thrdReductInteg}. 
\end{proof}
\\
\\
{\tmstrong{Proof of the main theorem}}
\\
\\
\begin{proof}
  We show that the recursive definition of $\beta_k$ in the statement of lemma
  \ref{thrdReductInteg} yields the formula
  \begin{eqnarray}
   \beta_k & = & \frac{i}{k} d_1^{\nabla^{S_1}} \nabla^{S_1} \sigma_{k - 1} +
    \frac{1}{\left( k + 1 \right) ! k!} \tmop{Sym}_{3, \ldots, k + 2}
    \theta_k\,,\label{bkcool}
    \\\nonumber
    &  & \\
    \theta_k & \assign & \sum_{r = 2}^{k - 2} \frac{\left( r + 2 \right) !}{r
    + 1}  ( i\,d_1^{\nabla^{S_1}} )^{k - r} \nabla^{S_1} \sigma_r +
    3! \left( i\,d_1^{\nabla^{S_1}} \right)^{k - 2} \beta_2\nonumber
    \\\nonumber
    &  & \\\nonumber
    & + & \sum_{r = 3}^k \left( r + 1 \right) ! \sum_{p = 2}^{r - 1} ( i\,
    d_1^{\nabla^{S_1}})^{k - r} \left( p S_p \wedge_1 S_{r - p + 1}
    \right)\,, \nonumber
  \end{eqnarray}
  for all $k \geqslant 3$. We show (\ref{bkcool}) by induction on $k$. We
  notice first that the recursive definition of $\beta_k$ rewrites as
 $$
    \beta_k = \frac{i}{k} d_1^{\nabla^{S_1}} \nabla^{S_1} \sigma_{k - 1} +
    \tmop{Sym}_{3, \ldots, k + 2} \left[\frac{i}{\left( k + 1 \right) !} d_1^{\nabla^{S_1}}
    \beta_{k - 1} \noplus + \frac{1}{k!} \sum_{p = 2}^{k - 1} p S_p \wedge_1
    S_{k - p + 1} \right],
 $$
  and we write
  \begin{eqnarray*}
    \beta_{k + 1} & = & \frac{i}{k + 1} d_1^{\nabla^{S_1}} \nabla^{S_1}
    \sigma_k + \tmop{Sym}_{3, \ldots, k + 3} \left[ \frac{i}{\left( k + 2
    \right) !} d_1^{\nabla^{S_1}} \beta_k \right],\\
    &  & \\
    & + & \tmop{Sym}_{3, \ldots, k + 3} \left[ \frac{1}{\left( k + 1 \right)
    !} \sum_{p = 2}^k p S_p \wedge_1 S_{k - p + 2} \right] .
  \end{eqnarray*}
  Using the inductive assumption (\ref{bkcool}) we infer the expressions
 \begin{eqnarray*}
    \frac{i}{\left( k + 2 \right) !} d_1^{\nabla^{S_1}} \beta_k & = &
    \frac{1}{\left( k + 2 \right) ! k}  ( i\,d_1^{\nabla^{S_1}})^2\,
    \nabla^{S_1} \sigma_{k - 1} 
    \\
    \\
    &+& \frac{1}{\left( k + 2 \right) ! \left( k + 1
    \right) ! k!} \tmop{Sym}_{4, \ldots, k + 3} i\,d_1^{\nabla^{S_1}}
    \theta_k,\\
    &  & \\
    i\,d_1^{\nabla^{S_1}} \theta_k & = & \sum_{r = 2}^{k - 2} \frac{\left( r +
    2 \right) !}{r + 1}  ( i\,d_1^{\nabla^{S_1}} )^{k + 1 - r}\,
    \nabla^{S_1} \sigma_r + 3! ( i\,d_1^{\nabla^{S_1}} )^{k - 1}
    \beta_2\\
    &  & \\
    & + & \sum_{r = 3}^k \left( r + 1 \right) ! \sum_{p = 2}^{r - 1}( i\,
    d_1^{\nabla^{S_1}})^{k + 1 - r} \left( p S_p \wedge_1 S_{r - p + 1}
    \right) \,.
  \end{eqnarray*}
  This combined with the identity $\tmop{Sym}_{3, \ldots, k + 3}
  \tmop{Sym}_{4, \ldots, k + 3} = k! \tmop{Sym}_{3, \ldots, k + 3}$, yields
  \begin{eqnarray*}
   && \beta_{k + 1} 
   \\
   \\
   & = & \frac{i}{k + 1} d_1^{\nabla^{S_1}} \nabla^{S_1}
    \sigma_k + \frac{1}{\left( k + 2 \right) ! k} \tmop{Sym}_{3, \ldots, k +
    3}  ( i\,d_1^{\nabla^{S_1}} )^2\, \nabla^{S_1} \sigma_{k - 1}\\
    &  & \\
    & + & \frac{1}{\left( k + 2 \right) ! \left( k + 1 \right) !}
    \tmop{Sym}_{3, \ldots, k + 3}  \sum_{r = 2}^{k - 2} \frac{\left( r + 2
    \right) !}{r + 1}  ( i\,d_1^{\nabla^{S_1}})^{k + 1 - r}\,
    \nabla^{S_1} \sigma_r\\
    &  & \\
    & + & \frac{3!}{\left( k + 2 \right) ! \left( k + 1 \right) !}
    \tmop{Sym}_{3, \ldots, k + 3}  ( i\,d_1^{\nabla^{S_1}})^{k - 1}
    \beta_2\\
    &  & \\
    & + & \frac{1}{\left( k + 2 \right) ! \left( k + 1 \right) !}
    \tmop{Sym}_{3, \ldots, k + 3} \sum_{r = 3}^k \left( r + 1 \right) !
    \sum_{p = 2}^{r - 1} ( i\,d_1^{\nabla^{S_1}})^{k + 1 - r}
    \left( p S_p \wedge_1 S_{r - p + 1} \right) \\
    &  & \\
    & + & \frac{1}{\left( k + 1 \right) !} \tmop{Sym}_{3, \ldots, k + 3}
    \sum_{p = 2}^k p S_p \wedge_1 S_{k - p + 2} .
  \end{eqnarray*} 
  Putting the terms together we obtain (\ref{bkcool}) for $\beta_{k + 1}$.
  Then the obvious identity $d_1^{\nabla} \nabla = \tmop{Alt}_2 \nabla^2$,
  combined with the formula (\ref{CurvOPERAT}) below, allows to conclude the
  required expression of $\beta_k \equiv \beta_k \left( \sigma_{k - 1}
  \right)$ in the statement of the main theorem. (We perform the change of indices $r':=r+1$ in the above expression of $\theta_k$). This concludes the proof of
  the main theorem.
\end{proof}

We remind first the following elementary and well known fact.

\begin{lemma}
  For any covariant derivative operator $\nabla$ acting on the smooth sections of $\mathbbm{C}T_M$ and for any tensor $\theta \in
  C^{\infty} \left( X, T^{\ast, \otimes q}_M \otimes \mathbbm{C}T_M \right)$
  holds the identity
  \begin{equation}
    \label{CurvOPERAT} \tmop{Alt}_2 \nabla^2 \theta = R^{\nabla} \nosymbol .
    \, \theta .
  \end{equation}
\end{lemma}

\section{The symplectic approach}
\subsection{General facts}\label{SymplecticAppr}
Let $M$ be a smooth manifold and let $\theta \in C^{\infty} ( T^{\ast}_M,
T^{\ast}_{T^{\ast}_M})$ be the canonical $1$-form on the total space of
the cotangent bundle defined as $\theta_{\lambda} \assign \lambda \cdot
d_{\lambda} \pi_{T^{\ast}_M}$, for any $\lambda \in T_M^{\ast}$. The canonical
symplectic form over the total space $T_M^{\ast}$ is defined as $\Omega
\assign - d \theta$. Let now $g$ be a Riemann metric over $M$ viewed as a
vector bundle map $g : T_M \longrightarrow T^{\ast}_M$. We define also the
forms $\theta^g \assign g^{\ast} \theta$ and $\Omega^g \assign g^{\ast} \Omega
= - d \theta^g$ over the total space of the tangent bundle. In explicit terms
$\theta_{\eta}^g = g \left( \eta \right) \cdot d_{\eta} \pi_{T_M}$, for all
$\eta \in T_M$, i.e.
\begin{eqnarray*}
  \theta_{\eta}^g \left( \xi \right)  =  g_{\pi_{T_M} \left( \eta \right)}
  (\eta, d_{\eta} \pi_{T_M} \cdot \xi)\,,
\end{eqnarray*}
for all $\xi \in T_{T_M, \eta}$. Let $\nabla^g$ be the Levi-Civita connection,
defined as
\begin{eqnarray*}
  2\, \nabla_{\xi}^g \eta  \assign  g^{- 1} \left[ \xi \,\neg\, d \left( g \eta
  \right) + \eta \,\neg\, d \left( g \xi \right)_{_{_{_{}}}} \noplus + d \langle
  \xi, \eta \rangle_g \right] + [\xi, \eta]\,,
\end{eqnarray*}
for any $\xi, \eta \in C^{\infty} \left( M, T_M \right)$. Let also $\gamma^g
\in C^{\infty} (T_M, T_{T_M}^{\ast} \otimes T_{T_M})$ be the Levi-Civita
$1$-form, which is determined along any section $\eta \in C^{\infty} \left( M,
T_M \right)$, by the identity $\gamma_{\eta}^g \cdot d \eta = T_{\eta}
\nabla^g \eta$.

For any curve $\eta : t \longmapsto \eta_t \in T_M$, we define the covariant
derivative
\begin{eqnarray*}
  \frac{\nabla^g \eta}{d t}  \assign  T_{\eta_t}^{- 1} \gamma_{\eta_t}^g 
  \dot{\eta}_t \in T_{M, \pi \left( \eta_t \right)} \,.
\end{eqnarray*}
We consider now two curves $\eta_j : t \longmapsto \eta_{j, t} \in T_M$, $j =
1, 2$, such that $\pi_{T_M} \left( \eta_{1, t} \right) = \pi_{T_M} \left(
\eta_{2, t} \right) = x_t$. Then
\begin{eqnarray*}
  \frac{d}{d t} g_{\mid x_t} \left( \eta_{1, t}\,,\, \eta_{2, t} \right)  = 
  g_{\mid x_t} \left( \frac{\nabla^g \eta_1}{d t}\,,\, \eta_{2, t} \right) +
  g_{\mid x_t} \left( \eta_{1, t}\,,\, \frac{\nabla^g \eta_2}{d t} \right) .
\end{eqnarray*}
With the previous notation hold the following well known lemma (see also
Klingenberg's book \cite{Kli} for a proof using local coordinates).

\begin{lemma}
  \label{symplectLCivi}The formula
  \begin{eqnarray*}
    \Omega_{\eta}^g \left( \xi_1, \xi_2 \right) & = & g_p (d_{\eta} \pi_{T_M}
    \xi_1, T_{\eta}^{- 1} \gamma_{\eta}^g \xi_2) - g_p  (d_{\eta} \pi_{T_M}
    \xi_2, T_{\eta}^{- 1} \gamma_{\eta}^g \xi_1)\,,
  \end{eqnarray*}
  hold for any $\eta \in T_M \nocomma$, $p = \pi_{T_M} \left( \eta \right)$
  and for any $\xi_1, \xi_2 \in T_{T_M, \eta}$.
\end{lemma}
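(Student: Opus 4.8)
The plan is to compute $\Omega^g = -d\theta^g$ directly from Cartan's formula $d\theta^g(X,Y) = X\cdot\theta^g(Y) - Y\cdot\theta^g(X) - \theta^g([X,Y])$ and to match it term by term against the claimed expression. Both sides are tensorial in $(\xi_1,\xi_2)$: the left-hand side is a $2$-form, while on the right $d_\eta\pi_{T_M}$ and $\gamma^g_\eta$ are pointwise linear. Hence it suffices to verify the identity when $\xi_1,\xi_2$ range over the horizontal and vertical lifts that span $T_{T_M,\eta}$. I would write $K_\eta := T^{-1}_\eta\gamma^g_\eta$ for the connector of $\nabla^g$, so that the right-hand side reads $g_p(d_\eta\pi_{T_M}\,\xi_1, K_\eta\xi_2) - g_p(d_\eta\pi_{T_M}\,\xi_2, K_\eta\xi_1)$. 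From the definition of the horizontal map and of $\frac{\nabla^g\eta}{dt} = T^{-1}_{\eta_t}\gamma^g_{\eta_t}\dot\eta_t$ one reads off the two defining relations $K_\eta\circ H_\eta = 0$ and $K_\eta\circ T_\eta = \mathbbm{I}_{T_{M,p}}$: the connector kills horizontal vectors and inverts $T_\eta$ on vertical ones. I would also record at the outset that $\theta^g$ vanishes on vertical vectors, since $\theta^g_\eta(\xi) = g_p(\eta, d_\eta\pi_{T_M}\,\xi)$ and $d_\eta\pi_{T_M}$ annihilates the vertical space.

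Two of the three cases are then immediate. In the vertical-vertical case $\theta^g$ vanishes on both arguments, and since vertical lifts are $\pi_{T_M}$-related to the zero field on $M$ their bracket is too, so $d\theta^g = 0$; the right-hand side also vanishes because $d_\eta\pi_{T_M}$ kills both arguments. In the mixed case I take a horizontal lift $u^h$ of a vector field $u$ and the vertical lift $w^v$ of a section $w$. Here $\theta^g(w^v)\equiv 0$, while differentiating $\theta^g(u^h)_\eta = g_p(\eta, u_p)$ along the fibre curve $t\mapsto \eta + t\,w_p$ gives $w^v\cdot\theta^g(u^h) = g_p(u_p, w_p)$. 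Since $u^h$ is $\pi_{T_M}$-related to $u$ and $w^v$ to the zero field, $[u^h,w^v]$ is $\pi_{T_M}$-related to $0$, hence $\theta^g([u^h,w^v]) = 0$; therefore $\Omega^g(u^h,w^v) = g_p(u_p,w_p)$, which matches the right-hand side because $K_\eta w^v = w_p$ and $K_\eta u^h = 0$.

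The substantive step is the horizontal-horizontal case. For horizontal lifts $u^h, v^h$ I would differentiate $\theta^g(v^h)_\eta = g_{\pi_{T_M}(\eta)}(\eta, v_{\pi_{T_M}(\eta)})$ along the $\nabla^g$-parallel curve $\eta(t)$ with $\eta(0)=\eta$ and $\dot\eta(0) = (u^h)_\eta$. Applying the metric-compatibility identity $\frac{d}{dt}g(\eta_1,\eta_2) = g(\frac{\nabla^g\eta_1}{dt},\eta_2) + g(\eta_1,\frac{\nabla^g\eta_2}{dt})$ stated just above the lemma, and using that $\eta(t)$ is parallel in the first slot, I get $u^h\cdot\theta^g(v^h) = g_p(\eta,\nabla^g_u v)$ and symmetrically $v^h\cdot\theta^g(u^h) = g_p(\eta,\nabla^g_v u)$. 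The bracket term is governed by the real part of the computation inside the proof of Lemma \ref{affineLm}, namely $[u^h,v^h]_\eta = H_\eta[u,v]_p - T_\eta(R^{\nabla^g}(u_p,v_p)\eta)$; its curvature component is vertical and is killed by $d_\eta\pi_{T_M}$, so $\theta^g([u^h,v^h]) = g_p(\eta,[u,v]_p)$. Assembling the three contributions yields $d\theta^g(u^h,v^h) = g_p(\eta,\ \nabla^g_u v - \nabla^g_v u - [u,v]) = g_p(\eta,\tau^{\nabla^g}(u,v)) = 0$ by torsion-freeness of $\nabla^g$, while the right-hand side vanishes because $K_\eta$ annihilates both horizontal arguments.

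By tensoriality the identity on lifts extends to arbitrary $\xi_1,\xi_2$, completing the proof. The only place where anything can genuinely go wrong is this horizontal-horizontal case: it is where the curvature enters through $[u^h,v^h]$, and one must see both that the curvature part drops out under $d_\eta\pi_{T_M}$ and that the surviving combination is precisely the torsion of $\nabla^g$, which vanishes. Everything else reduces to the connector relations $K_\eta H_\eta = 0$ and $K_\eta T_\eta = \mathbbm{I}$ together with the vanishing of $\theta^g$ on vertical directions.
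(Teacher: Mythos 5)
Your proof is correct, but it follows a genuinely different route from the paper's. The paper avoids any frame decomposition: it extends $\xi_1,\xi_2$ to vector fields $\Xi_1,\Xi_2$ that are \emph{linear in a coordinate trivialization}, so that $[\Xi_1,\Xi_2]=0$ and the Cartan formula collapses to $\Omega^g(\Xi_1,\Xi_2)=\Xi_2.\theta^g(\Xi_1)-\Xi_1.\theta^g(\Xi_2)$; it then splits into cases according to whether $d_\eta\pi_{T_M}\xi_j$ vanishes, and in the generic case the cross-term $g_p(\eta,\nabla^g_{\zeta_2}\zeta_1-\nabla^g_{\zeta_1}\zeta_2)=g_p(\eta,[\zeta_1,\zeta_2])$ dies because the projected fields commute (torsion-freeness entering exactly here). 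You instead use the Dombrowski splitting $T_{T_M,\eta}=\mathcal{H}_\eta\oplus\ker d_\eta\pi_{T_M}$, reduce by tensoriality to horizontal and vertical lifts, and exploit the connector relations $K_\eta H_\eta=0$, $K_\eta T_\eta=\mathbbm{I}$; this forces you to handle the bracket term $[u^h,v^h]_\eta=H_\eta[u,v]_p-T_\eta\bigl(R^{\nabla^g}(u_p,v_p)\eta\bigr)$, which the paper's commuting extensions sidestep entirely, but the curvature component is vertical and killed by $\theta^g$, and the surviving combination $\nabla^g_u v-\nabla^g_v u-[u,v]$ is the torsion, which vanishes — all of which you identify correctly (and the bracket formula you quote is indeed the real part of the expansion in the proof of Lemma \ref{affineLm}, equivalently Lemma \ref{TorsCurv}). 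The trade-off: the paper's computation is shorter and needs no curvature input, at the price of a coordinate-dependent extension trick; your version is coordinate-free, makes visible \emph{why} curvature is irrelevant (verticality) and where torsion-freeness is indispensable, and identifies $\Omega^g$ in the splitting as the canonical pairing, with the mixed case $\Omega^g(u^h,w^v)=g_p(u_p,w_p)$ carrying all the content. One small point to keep honest: in the horizontal-horizontal case you differentiate $\theta^g(v^h)$ along the parallel curve rather than along an integral curve of $u^h$; this is legitimate precisely because the derivative of a function at $\eta$ depends only on the initial velocity $\dot\eta(0)=H_\eta u_p$, and it is worth saying so explicitly.
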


\begin{proof}
  With respect to a local coordinate trivialization of the tangent bundle we
  can extend in a linear way the vectors $\xi_1, \xi_2$ in to vector fields
  $\Xi_1$, $\Xi_2$ in a neighborhood of $T_{M, p}$ inside $T_M$. In this way
  $[\Xi_1, \Xi_2] = 0$ and thus $\Omega^g \left( \Xi_1, \Xi_2 \right) = \Xi_2
  . \theta^g \left( \Xi_1 \right) - \Xi_1 . \theta^g \left( \Xi_2 \right)$. We
  denote by $\eta_{j, t}$, $j = 1, 2$ the corresponding flow lines starting
  from $\eta$. Then
  \begin{eqnarray*}
    \Omega_{\eta}^g \left( \xi_1, \xi_2 \right) & = & \frac{d}{d t} _{\mid_{t
    = 0}}  \left[ g_{\pi_{T_M} \left( \eta_{2, t} \right)} \left( \eta_{2, t}\,,
    d_{\eta_{2, t}} \pi_{T_M} \cdot \Xi_1 \left( \eta_{2, t}
    \right)_{_{_{_{}}}} \right) \right] \\
    &  & \\
    & - & \frac{d}{d t} _{\mid_{t = 0}}  \left[ g_{\pi_{T_M} \left( \eta_{1,
    t} \right)} \left( \eta_{1, t}\,, d_{\eta_{1, t}} \pi_{T_M} \cdot \Xi_2
    \left( \eta_{1, t} \right)_{_{_{_{}}}} \right) \right] .
  \end{eqnarray*}
  We distinguish two cases.
  
  $\bullet$ In the case when $d_{\eta} \pi_{T_M} \xi_j = 0$ for some $j$, say
  $j = 1$, then $d_{\eta_{2, t}} \pi_{T_M} \Xi_1 \left( \eta_{2, t} \right) =
  0$ and
  \[  \frac{d}{d t} d_{\eta_{1, t}} \pi_{T_M} \Xi_2 \left( \eta_{1, t} \right)
     = 0\,, \]
  by the linear nature of the local extension. Then
  \begin{eqnarray*}
    \Omega_{\eta}^g \left( \xi_1, \xi_2 \right) = - g_p  (T_{\eta}^{- 1}
    \gamma_{\eta}^g \xi_1 \nocomma, d_{\eta} \pi_{T_M} \xi_2) \,.
  \end{eqnarray*}
  The case $j = 2$ is quite similar.
  
  $\bullet$ In the case when $d_{\eta} \pi_{T_M} \xi_j$, do not vanish for $j
  = 1, 2$, then the vector fields $\zeta_j \assign d \pi_{T_M} \Xi_j$ are well
  defined and $[\zeta_1, \zeta_2] = 0$. Then
  \begin{eqnarray*}
    \Omega_{\eta}^g \left( \xi_1, \xi_2 \right) & = & g_p (T_{\eta}^{- 1}
    \gamma_{\eta}^g \xi_2, d_{\eta} \pi_{T_M} \xi_1) + g_p (\eta,
    \nabla_{\zeta_2 \left( p \right)}^g \zeta_1 - \nabla_{\zeta_1 \left( p
    \right)}^g \zeta_2)\\
    &  & \\
    & - & g_p (T_{\eta}^{- 1} \gamma_{\eta}^g \xi_1, d_{\eta} \pi_{T_M}
    \xi_2)\\
    &  & \\
    & = & g_p (T_{\eta}^{- 1} \gamma_{\eta}^g \xi_2, d_{\eta} \pi_{T_M}
    \xi_1) + g_p (\eta, [\zeta_1, \zeta_2] \left( p \right))\\
    &  & \\
    & - & g_p (T_{\eta}^{- 1} \gamma_{\eta}^g \xi_1, d_{\eta} \pi_{T_M}
    \xi_2)\,,
  \end{eqnarray*}
  which implies the required conclusion.
\end{proof}

We need to remind in detail also the following very well known lemma (see also \cite{Kli}).

\begin{lemma}
  Let $2\, \zeta^g \assign \Omega^{g, - 1} d\, | \cdot |^2_g$ and let $\Phi_t^g$
  be the corresponding $1$-parameter sub-group of transformations of $T_M$.
  Then for any $\eta \in T_M$ the curve $c_t \assign \pi_{T_M} \circ \Phi_t^g
  \left( \eta \right)$ is the geodesic with initial speed $\dot{c}_0 = \eta$
  and $\dot{c}_t = \Phi_t^g \left( \eta \right)$.
\end{lemma}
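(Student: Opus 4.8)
The plan is to recognize $\zeta^g$ as the Hamiltonian vector field of the energy function $\tfrac12|\cdot|^2_g$ on $T_M$ with respect to $\Omega^g$, and then to show that this vector field is exactly the geodesic spray, namely the Levi--Civita horizontal lift $\eta\mapsto H_\eta\eta$. Here $H_\eta:T_{M,p}\to T_{T_M,\eta}$ denotes the $\nabla^g$-horizontal lift, characterized by $d_\eta\pi_{T_M}\circ H_\eta=\mathbbm{I}$ and $\gamma^g_\eta\circ H_\eta=0$; the latter follows from the defining relation $\gamma^g_\eta\cdot d\eta=T_\eta\nabla^g\eta$ applied to a local section parallel at $p$ in the relevant direction. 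Once the identification $\zeta^g_\eta=H_\eta\eta$ is established, the two assertions about $c_t$ drop out of the elementary properties of $H$.

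First I would compute the differential of $|\cdot|^2_g$. For $\xi\in T_{T_M,\eta}$, choose a curve $t\mapsto\eta_t$ in $T_M$ with $\eta_0=\eta$ and $\dot\eta_0=\xi$; applying the metric-compatibility identity stated just before Lemma \ref{symplectLCivi} with $\eta_{1,t}=\eta_{2,t}=\eta_t$, together with the definition of $\frac{\nabla^g\eta}{dt}$, yields
\[ \left(d\,|\cdot|^2_g\right)_\eta(\xi)=2\,g_p\bigl(\eta,\;T_\eta^{-1}\gamma_\eta^g\,\xi\bigr),\qquad p=\pi_{T_M}(\eta). \]
Then I would use Lemma \ref{symplectLCivi} to rewrite the defining relation $\Omega^g_\eta(2\zeta^g_\eta,\xi)=\left(d\,|\cdot|^2_g\right)_\eta(\xi)$ as
\[ g_p\bigl(d_\eta\pi_{T_M}(2\zeta^g_\eta),\,T_\eta^{-1}\gamma_\eta^g\xi\bigr)-g_p\bigl(d_\eta\pi_{T_M}\xi,\,T_\eta^{-1}\gamma_\eta^g(2\zeta^g_\eta)\bigr)=2\,g_p\bigl(\eta,\,T_\eta^{-1}\gamma_\eta^g\xi\bigr), \]
required to hold for every $\xi$. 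Testing against vertical vectors $\xi=T_\eta w$ (for which $d_\eta\pi_{T_M}\xi=0$ and $T_\eta^{-1}\gamma_\eta^g\xi=w$) forces $d_\eta\pi_{T_M}\zeta^g_\eta=\eta$, while testing against horizontal vectors (for which $T_\eta^{-1}\gamma_\eta^g\xi=0$ and $d_\eta\pi_{T_M}\xi$ ranges over all of $T_{M,p}$) forces $T_\eta^{-1}\gamma_\eta^g\zeta^g_\eta=0$, i.e.\ $\zeta^g_\eta$ is horizontal. These two conditions together pin down $\zeta^g_\eta=H_\eta\eta$.

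Finally I would run the flow. Setting $\eta_t=\Phi_t^g(\eta)$, the flow equation reads $\dot\eta_t=\zeta^g_{\eta_t}=H_{\eta_t}\eta_t$, so $\dot\eta_t$ is horizontal. Projecting with $d_{\eta_t}\pi_{T_M}$ and using $d_{\eta_t}\pi_{T_M}\circ H_{\eta_t}=\mathbbm{I}$ gives $\dot c_t=d_{\eta_t}\pi_{T_M}\dot\eta_t=\eta_t=\Phi_t^g(\eta)$, which is the second assertion, and in particular $\dot c_0=\eta$. Moreover, by the definition of the covariant derivative along a curve, $\frac{\nabla^g\dot c}{dt}=\frac{\nabla^g\eta}{dt}=T_{\eta_t}^{-1}\gamma_{\eta_t}^g\dot\eta_t=0$, since $\dot\eta_t$ is horizontal; hence $c_t$ is a geodesic with initial speed $\eta$.

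The only genuinely delicate point is the identification $\zeta^g_\eta=H_\eta\eta$: one must correctly read off from Lemma \ref{symplectLCivi} that $\Omega^g$ pairs the horizontal component of one argument against the vertical (covariant) component of the other, and must verify that $\gamma^g$ annihilates horizontal vectors. Everything after that is a direct consequence of $d\pi_{T_M}\circ H=\mathbbm{I}$ and $\gamma^g\circ H=0$. I would also note that the sign convention in the definition $2\,\zeta^g\assign\Omega^{g,-1}d\,|\cdot|^2_g$ is exactly the one that produces $+H_\eta\eta$ (rather than its reverse), consistent with the stated initial speed.
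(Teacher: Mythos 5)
Your proof is correct and follows essentially the same route as the paper: you derive $\bigl(d\,|\cdot|^2_g\bigr)_\eta(\xi)=2\,g_p\bigl(\eta,T_\eta^{-1}\gamma^g_\eta\xi\bigr)$, feed it through Lemma \ref{symplectLCivi}, test against vertical and horizontal vectors to obtain $d_\eta\pi_{T_M}\zeta^g_\eta=\eta$ and $\gamma^g_\eta\zeta^g_\eta=0$, hence $\zeta^g_\eta=H^g_\eta\cdot\eta$, and then project the flow, exactly as in the paper's proof of (\ref{GenGFw}) and (\ref{flowLine}). The only cosmetic difference is the last step, where you conclude via $\frac{\nabla^g\dot c}{dt}=T_{\eta_t}^{-1}\gamma^g_{\eta_t}\dot\eta_t=0$ while the paper writes the equivalent horizontal-lift form $\ddot c_t=H^g_{\dot c_t}\cdot\dot c_t$ of the geodesic equation.
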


\begin{proof}
  For any $\eta \in T_M$ and for any $\xi \in T_{T_M, \eta}$, let $t
  \longmapsto \eta_t \in T_M$ be the curve such that $\dot{\eta}_0 = \xi$.
  Then
  \begin{eqnarray*}
    \xi \,.\, | \cdot |_g^2 & = & \frac{d}{d t}_{\mid_{t = 0}}  \left[
    g_{\pi_{T_M} \left( \eta_t \right)} (\eta_t\,, \eta_t) \right]\\
    &  & \\
    & = & 2 \,g_p \left( \eta\,, T_{\eta}^{- 1} \gamma_{\eta}^g \xi \right),
  \end{eqnarray*}
  and thus 
  $$
  \Omega^g_{\eta} \left( \zeta_{\eta}^g\,,\, \xi \right) = g_p \left(
  \eta\,, T_{\eta}^{- 1} \gamma_{\eta}^g \xi \right),
  $$ by the definition of the
  vector field $\zeta_{\eta}^g$. Using lemma \ref{symplectLCivi} we infer
  \begin{equation}
    \label{Mix} g_p (d_{\eta} \pi_{T_M} \zeta^g_{\eta}, T_{\eta}^{- 1}
    \gamma_{\eta}^g \xi) - g_p  (d_{\eta} \pi_{T_M} \xi, T_{\eta}^{- 1}
    \gamma_{\eta}^g \zeta^g_{\eta}) = g_p \left( \eta, T_{\eta}^{- 1}
    \gamma_{\eta}^g \xi \right) .
  \end{equation}
  In the case $d_{\eta} \pi_{T_M} \xi = 0$, the identity (\ref{Mix}) yields
  \begin{eqnarray*}
    g_p (d_{\eta} \pi_{T_M} \zeta^g_{\eta}, T_{\eta}^{- 1} \xi) = g_p
    \left( \eta, T_{\eta}^{- 1} \xi \right),
  \end{eqnarray*}
  and thus $d_{\eta} \pi_{T_M} \zeta^g_{\eta} = \eta$. In the case
  $\gamma_{\eta}^g \,\xi = 0$, the identity (\ref{Mix}) yields
  \begin{eqnarray*}
    g_p  (d_{\eta} \pi_{T_M} \xi\,, T_{\eta}^{- 1} \gamma_{\eta}^g
    \zeta^g_{\eta}) = 0\,,
  \end{eqnarray*}
  and thus $\gamma_{\eta}^g\, \zeta^g_{\eta} = 0$. We deduce the formula
  \begin{equation}
    \label{GenGFw} \zeta^g_{\eta} = H_{\eta}^g \cdot \eta \,.
  \end{equation}
  Thus the flow line $\eta_t \assign \Phi_t^g \left( \eta \right)$ satisfies
  the identity
  \begin{equation}
    \label{flowLine}  \dot{\eta}_t = H_{\eta_t}^g \cdot \eta_t \,.
  \end{equation}
  We deduce
  \begin{eqnarray*}
    \dot{c}_t & = & d_{\eta_t} \pi_{T_M} \cdot \dot{\eta}_t\\
    &  & \\
    & = & d_{\eta_t} \pi_{T_M} \cdot H_{\eta_t}^g \cdot \eta_t\\
    &  & \\
    & = & \eta_t\,,
  \end{eqnarray*}
  and $\ddot{c}_t = H_{\dot{c}_t}^g \cdot \dot{c}_t$, which is the geodesic
  equation.
\end{proof}

We provide now a proof of the following well known result due to
Lempert-Sz\"oke \cite{Le-Sz1}. See also Guillemin-Stenzel \cite{Gu-St}, Burns \cite{Bu1,Bu2} and Burns-Halverscheid-Hind \cite{BHH}.

\begin{corollary}
  \label{sympHol}Let $\left( M, g \right)$ be a smooth Riemannian manifold. A
  complex structure $J$ over the total space of the tangent bundle $T_M$
  satisfies the conditions
  \begin{equation}
    \label{JcanM} J_{\mid M} = J^{\tmop{can}},
  \end{equation}
  \begin{equation}
    \label{CanFm} 2 \theta^g = d | \cdot |_g^2 \cdot J .
  \end{equation}
  if and only if for any $\eta \in T_M$, the smooth map $\psi_{\eta} : t +
  i s \longmapsto s \Phi_t^g \left( \eta \right)$, defined in a neighborhood
  of $0 \in \mathbbm{C}$, is $J$-holomorphic.
\end{corollary}

\begin{proof}
  We define the Reeb vector field $\Xi \assign \Omega^{g, - 1} \theta^g$. This
  vector field is independent of the metric $g$. Indeed by lemma
  \ref{symplectLCivi} the identity
  \begin{equation}
    \label{metricIDEN} g_p (\eta\,, d_{\eta} \pi_{T_M} \xi) = g_p (d_{\eta}
    \pi_{T_M} \Xi_{\eta}\,, T_{\eta}^{- 1} \gamma_{\eta}^g \xi) - g_p  (d_{\eta}
    \pi_{T_M} \xi\,, T_{\eta}^{- 1} \gamma_{\eta}^g \Xi_{\eta})\,,
  \end{equation}
  holds for any $\xi_{} \in T_{T_M, \eta}$. Thus if $d_{\eta} \pi_{T_M} \xi = 0$ we
  deduce the equality 
  $$
  g_p (d_{\eta} \pi_{T_M} \Xi_{\eta}\,, T_{\eta}^{- 1} \xi)
  = 0\,,
$$ and thus $d_{\eta} \pi_{T_M} \Xi_{\eta} = 0$. Then the identity
  (\ref{metricIDEN}) reduces as
  \begin{eqnarray*}
    g_p (\eta\,, d_{\eta} \pi_{T_M} \xi) = -\; g_p  (d_{\eta} \pi_{T_M} \xi\,,
    T_{\eta}^{- 1} \Xi_{\eta})\,,
  \end{eqnarray*}
  for any $\xi_{} \in T_{T_M, \eta}$. We infer the formula
  \begin{equation}
    \label{Reeb} \Xi_{\eta} = - \;T_{\eta} \cdot \eta\,,
  \end{equation}
  for all $\eta \in T_M$. We notice now that the identity (\ref{CanFm}) is
  equivalent to the identity
  \begin{eqnarray*}
    \Omega^g \left( 2\, \Xi\,, \xi \right)  =  d\, | \cdot |^2_g \,J \xi\,,
  \end{eqnarray*}
  and is also equivalent to the identity $\theta^g = - d_{_{J^{}}}^c | \cdot
  |_g^2 .$ Thus 
$$
\Omega^g = d d_{_{J^{}}}^c | \cdot |_g^2 = i
  \partial_{_{J^{}}} \overline{\partial}_{_{J^{}}} | \cdot |_g^2\,,
$$
thanks to
  the fact that $J^g$ is integrable. We infer that the symplectic form
  $\Omega^g$ is $J^{}$-invariant. Thus
  \begin{eqnarray*}
    \Omega^g \left( 2\, J\, \Xi\,, J \xi \right)  =  d\, | \cdot |^2_g\, J \xi\,,
  \end{eqnarray*}
  i.e.
  \begin{equation}
    \label{ReebID} J\, \Xi = \zeta^g .
  \end{equation}
  This combined with (\ref{Reeb}) and with (\ref{GenGFw}) implies that
  (\ref{CanFm}) is equivalent to the identity
  \begin{equation}
    \label{Jcan} J_{\eta}\, H_{\eta}^g \cdot \eta = T_{\eta} \cdot \eta \,.
  \end{equation}
  We show now that the later combined with (\ref{JcanM}) is equivalent to the
  $J$-holomorphy of the maps $\psi_{\eta}$. For this purpose we observe that
  the differential of such maps is given by
  \begin{eqnarray*}
    d_{t_0 + i s_0} \psi_{\eta} \left( a \frac{\partial}{\partial t} \noplus +
    b \frac{\partial}{\partial s} \right) =  a\, d \left( s_0
    \mathbbm{I}_{T_M} \right) \dot{\Phi}^g_{t_0} \left( \eta \right) + b\,
    T_{s_0 \Phi^g_{t_0} \left( \eta \right)} \Phi^g_{t_0} \left( \eta \right)
    .
  \end{eqnarray*}
  But
  \begin{eqnarray*}
    \dot{\Phi}^g_{t_0} \left( \eta \right) & = & \zeta^g \circ \Phi^g_{t_0}
    \left( \eta \right)\\
    &  & \\
    & = & H_{\Phi^g_{t_0} \left( \eta \right)}^g \cdot \Phi^g_{t_0} \left(
    \eta \right),
  \end{eqnarray*}
  thanks to (\ref{GenGFw}). Then using the property (\ref{multHoriz}) of the
  linear connection $\nabla^g$ we infer
  \begin{equation}
    \label{expDifJhol} d_{t_0 + i s_0} \psi_{\eta} \left( a
    \frac{\partial}{\partial t} \noplus + b \frac{\partial}{\partial s}
    \right) = \left( a H^g_{s_0 \Phi^g_{t_0} \left( \eta \right)} + b T_{s_0
    \Phi^g_{t_0} \left( \eta \right)} \right) \cdot \Phi^g_{t_0} \left( \eta
    \right) .
  \end{equation}
  The smooth map $\psi_{\eta}$ is $J$-holomorphic if and only if
  \begin{eqnarray*}
    d_{t_0 + i s_0} \psi_{\eta} \left( - b \frac{\partial}{\partial t} \noplus
    + a \frac{\partial}{\partial s} \right)  =  J d_{t_0 + i s_0}
    \psi_{\eta} \left( a \frac{\partial}{\partial t} \noplus + b
    \frac{\partial}{\partial s} \right),
  \end{eqnarray*}
  thus, if and only if
  \begin{eqnarray*}
    \left( - b H^g_{s_0 \Phi^g_{t_0} \left( \eta \right)} + a T_{s_0
    \Phi^g_{t_0} \left( \eta \right)} \right) \cdot \Phi^g_{t_0} \left( \eta
    \right) = J \left( a H^g_{s_0 \Phi^g_{t_0} \left( \eta \right)} + b
    T_{s_0 \Phi^g_{t_0} \left( \eta \right)} \right) \cdot \Phi^g_{t_0} \left(
    \eta \right) .
  \end{eqnarray*}
  For $s_0 \neq 0$ this is equivalent to (\ref{Jcan}). For $s_0 = 0$ this
  is equivalent to (\ref{JcanM}). We deduce the required conclusion.
\end{proof}

The condition (\ref{JcanM}) implies that $J$ is an $M$-totally real complex
structure. We can provide now the proof of corollary \ref{MainCoroll}.

\subsection{Proof of corollary \ref{MainCoroll}}\label{Prooof MainCoroll}

\begin{proof}
  If we write $A = \alpha + i T B$ and $\alpha = H^g - T \Gamma$, then 
  $$ S\assign T^{-1}(H^{\nabla}-\overline{A}\,)=\Gamma + i B\,.
  $$
  We set $S_k = \Gamma_k + i B_k$. From the proof of
  corollary \ref{sympHol} we know that in the case $J$ is integrable over $U$,
  the curve $\psi_{\eta}$ is $J$-holomorphic if and only if hold (\ref{Jcan}).
  The later rewrites as
  \begin{eqnarray*}
    H_{\eta}^g \cdot \eta  =  - J_{\eta} T_{\eta} \cdot \eta \,.
  \end{eqnarray*}
  Using (\ref{JAvert}) we infer that the previous identity is equivalent to
  \begin{equation}
    \label{interKey} H_{\eta}^g \cdot \eta = \alpha_{\eta} B^{- 1}_{\eta}
    \cdot \eta \,.
  \end{equation}
  Taking $d_{\eta} \pi$ on both sides of (\ref{interKey}) we deduce $\eta =
  B^{- 1}_{\eta} \cdot \eta$. Therefore (\ref{interKey}) is equivalent to the
  system
  \begin{equation}
    \label{systKey}  \left\{ \begin{array}{l}
      B_{\eta} \cdot \eta  = \eta \hspace{0.25em},\\
      \\
      H_{\eta}^g \cdot \eta  = \alpha_{\eta}
      \cdot \eta \hspace{0.25em} .
    \end{array} \right.
  \end{equation}
  Then the system (\ref{systKey}) rewrites as
  \[ \left\{ \begin{array}{l}
       \sum_{k \geqslant 1} B_k \left( \eta^{k + 1} \right)   =
       0 \hspace{0.25em},\\
       \\
       \sum_{k \geqslant 1} \Gamma_k \left( \eta^{k + 1} \right)
        = 0 \hspace{0.25em} .
     \end{array} \right. \]
  and thus as $S_k \left( \eta^{k + 1} \right) = 0$ for all
  $k \geqslant 1$. We remind now that, according to theorem \ref{Maintheorem},
  the integrability of the structure $J$ implies the condition $S_1 \in
  C^{\infty} ( M, S^2 T^{\ast}_M \otimes_{_{\mathbbm{R}}}
  \mathbbm{C}T_M)$. We infer $S_1 = 0$. We notice that, with the
  notation of the statement of theorem \ref{Maintheorem}, the equation
  $\tmop{Circ} \beta_k = 0$ hold for all $k \geqslant 1$. This combined with
  the identity 
  $$
  [\tmop{Circ}, \tmop{Sym}_{2, \ldots, k + 2}] = 0\,,
  $$ implies
  \begin{equation}
    \label{CrcSmBeta} \tmop{Circ} \tmop{Sym}_{2, \ldots, k + 2} \beta_k = 0\,,
  \end{equation}
  for all $k \geqslant 1$. So if \ we apply the $\tmop{Circ}$ operator to both
  sides of the definition of $S_2$ in the statement of theorem
  \ref{Maintheorem} we infer $\tmop{Circ} S_2 = \tmop{Circ} \sigma_2 = 3
  \sigma_2$. If we evaluate this equality to $\eta^3$ we infer $S_2 \left(
  \eta^3 \right) = \sigma_2 \left( \eta^3 \right)$, which implies $\sigma_2 =
  0$. We show now by induction that $\sigma_k = 0$ for all $k \geqslant 2$.
  Indeed by the inductive assumption
  \begin{eqnarray*}
    S_{k + 1} = \frac{i}{\left( k + 2 \right) !} \tmop{Sym}_{2, \ldots, k
    + 2} \beta_k + \sigma_{k + 1} \,.
  \end{eqnarray*}
  Applying the $\tmop{Circ}$ operator to both sides of this identity and using
  the equation (\ref{CrcSmBeta}), we infer $\tmop{Circ} S_{k + 1} =
  \tmop{Circ} \sigma_{k + 1} = 3 \sigma_{k + 1}$, which evaluated at $\eta^{k
  + 2}$ gives $S_{k + 1} \left( \eta^{k + 2} \right) = \sigma_{k + 1} \left(
  \eta^{k + 2} \right)$. We deduce $\sigma_{k + 1} = 0$. Using the identity
  \begin{equation}
    \label{SmSm} \tmop{Sym}_{2, \ldots, k + 1} \tmop{Sym}_{3, \ldots, k + 1} =
    \left( k - 1 \right) ! \tmop{Sym}_{2, \ldots, k + 1}\,,
  \end{equation}
  we infer from the statement of theorem \ref{Maintheorem} and with the
  notation there
  \begin{eqnarray*}
    S_k & = & \frac{i}{\left( k + 1 \right) ! k!} \tmop{Sym}_{2, \ldots, k +
    1} \theta_{k - 1},
  \end{eqnarray*}
  for $k \geqslant 2$, with $\theta_1 \assign 2 R^g$ and
  \begin{eqnarray*}
    \theta_k & \assign & - 2 i ( i d_1^{\nabla^g})^{k - 2} \left(
    \nabla^g R^g \right)_2\\
    &  & \\
    & + &  \sum_{r = 4}^{k+1}  r! \sum_{p = 2}^{r - 2} (
    i d_1^{\nabla^g} )^{k+1 - r} \left( p S_p \wedge_1 S_{r - p}
    \right),
  \end{eqnarray*}
  for all $k \geqslant 2$. Moreover we observe that the equation $\tmop{Circ} \beta_k = 0$, $k
  \geqslant 3$ rewrites as 
  $$
  \tmop{Circ} \tmop{Sym}_{3, \ldots, k + 2} \theta_k
  = 0\,.
  $$ 
If we set $\Theta_k \left( g \right) \assign \theta_{k - 1}$, for all
  $k \geqslant 2$ we obtain the required expansion.
  
On the other hand if the expansion in the statement of the lemma under
  consideration hold then $J$ is integrable thanks to theorem
  \ref{Maintheorem} and $\tmop{Circ} S_k = 0$ for all $k \geqslant 2$, ($S_1 =
  0$). Indeed for $k = 2, 3$ this equality follows from the identities
  $\tmop{Circ} \Theta_k \left( g \right) = 0$ and
  \begin{equation}
    \label{ComCircSm}  [\tmop{Circ}, \tmop{Sym}_{2, \ldots, k + 1}] = 0 \,.
  \end{equation}
  For $k \geqslant 4$, we use the identities (\ref{ComCircSm}), (\ref{SmSm})
  and the integrability equations satisfied by the metric $g$. We deduce $S_k \left( \eta^{k
  + 1} \right)  = 0$, for all $k \geqslant 1$, which is
  equivalent to (\ref{interKey}) and so to the fact that the curves
  $\psi_{\eta}$ are $J$-holomorphic.
\end{proof}
\section{Proof of the proposition \ref{Vanish}}\label{Vanishing4}
\begin{proof}
  We expand first the term
  \begin{eqnarray*}
    &  & d_1^{\nabla} \left( \nabla R^{\nabla} \right)_2 \left( \xi_1, \xi_2,
    \xi_3, \xi_4, \xi_5 \right)\\
    &  & \\
    & = & \nabla_{\xi_1} \left( \nabla R^{\nabla} \right)_2 \left( \xi_2,
    \xi_3, \xi_4, \xi_5 \right) - \nabla_{\xi_2} \left( \nabla R^{\nabla}
    \right)_2 \left( \xi_1, \xi_3, \xi_4, \xi_5 \right)\\
    &  & \\
    & = & \nabla^2 R^{\nabla} \left( \xi_1, \xi_3, \xi_2, \xi_4, \xi_5
    \right) - \nabla^2 R^{\nabla} \left( \xi_2, \xi_3, \xi_1, \xi_4, \xi_5
    \right)\\
    &  & \\
    & = & \nabla^2 R^{\nabla} \left( \xi_1, \xi_3, \xi_2, \xi_4, \xi_5
    \right) + \nabla^2 R^{\nabla} \left( \xi_2, \xi_3, \xi_4, \xi_1, \xi_5
    \right)\\
    &  & \\
    & = & \nabla^2 R^{\nabla} \left( \xi_3, \xi_1, \xi_2, \xi_4, \xi_5
    \right) + \nabla^2 R^{\nabla} \left( \xi_3, \xi_2, \xi_4, \xi_1, \xi_5
    \right)\\
    &  & \\
    & + & (R^{\nabla} . R^{\nabla}) \left( \xi_1, \xi_3, \xi_2, \xi_4, \xi_5
    \right) + (R^{\nabla} . R^{\nabla}) \left( \xi_2, \xi_3, \xi_4, \xi_1,
    \xi_5 \right),
  \end{eqnarray*}
  thanks to formula (\ref{CurvOPERAT}). Using the differential Bianchi identity we infer
  \begin{eqnarray*}
    &  & d_1^{\nabla} \left( \nabla R^{\nabla} \right)_2 \left( \xi_1, \xi_2,
    \xi_3, \xi_4, \xi_5 \right)\\
    &  & \\
    & = & - \nabla^2 R^{\nabla} \left( \xi_3, \xi_4, \xi_1, \xi_2, \xi_5
    \right)\\
    &  & \\
    & + & (R^{\nabla} . R^{\nabla}) \left( \xi_1, \xi_3, \xi_2, \xi_4, \xi_5
    \right) + (R^{\nabla} . R^{\nabla}) \left( \xi_2, \xi_3, \xi_4, \xi_1,
    \xi_5 \right)\,.
  \end{eqnarray*}
  In order to simplify the notation in the computations that will follow we
  will use from now on the identification 
  $$
  \theta \left( \xi_1, \xi_2, \xi_3,
  \xi_4, \xi_5 \right) \equiv \theta \left( 12345 \right)\,,
  $$
  for any tensor
  $\theta$. We expand now the term 
  $$
  \tmop{Circ} \tmop{Sym}_{3, 4, 5} d_1^{\nabla} \left( \nabla
  R^{\nabla} \right)_2\,.
  $$ We let 
  $$
  \theta \left( 12345 \right) \assign \nabla^2
  R^{\nabla} \left( 34125 \right)\,,
  $$ and we observe the identities
  \begin{eqnarray*}
    \left( \tmop{Sym}_{3, 4, 5} \theta \right) \left( 12345 \right) & = &
    \nabla^2 R^{\nabla} \left( 34125 \right) + \nabla^2 R^{\nabla} \left(
    35124 \right) + \nabla^2 R^{\nabla} \left( 43125 \right)\\
    &  & \\
    & + & \nabla^2 R^{\nabla} \left( 45123 \right) + \nabla^2 R^{\nabla}
    \left( 53124 \right) + \nabla^2 R^{\nabla} \left( 54123 \right),\\
    &  & \\
    \left( \tmop{Sym}_{3, 4, 5} \theta \right) \left( 23145 \right) & = &
    \nabla^2 R^{\nabla} \left( 14235 \right) + \nabla^2 R^{\nabla} \left(
    15234 \right) + \nabla^2 R^{\nabla} \left( 41235 \right)\\
    &  & \\
    & + & \nabla^2 R^{\nabla} \left( 45231 \right) + \nabla^2 R^{\nabla}
    \left( 51234 \right) + \nabla^2 R^{\nabla} \left( 54231 \right),\\
    &  & \\
    \left( \tmop{Sym}_{3, 4, 5} \theta \right) \left( 31245 \right) & = &
    \nabla^2 R^{\nabla} \left( 24315 \right) + \nabla^2 R^{\nabla} \left(
    25314 \right) + \nabla^2 R^{\nabla} \left( 42315 \right)\\
    &  & \\
    & + & \nabla^2 R^{\nabla} \left( 45312 \right) + \nabla^2 R^{\nabla}
    \left( 52314 \right) + \nabla^2 R^{\nabla} \left( 54312 \right),
  \end{eqnarray*}
  Summing up we obtain
  \begin{eqnarray*}
   && \left( \tmop{Circ} \tmop{Sym}_{3, 4, 5} \theta \right) \left( 12345
    \right) \\
    &  & \\
    & = & \nabla^2 R^{\nabla} \left( 34125 \right) + \nabla^2
    R^{\nabla} \left( 14235 \right) + \nabla^2 R^{\nabla} \left( 24315
    \right)\\
    &  & \\
    & + & \nabla^2 R^{\nabla} \left( 35124 \right) + \nabla^2 R^{\nabla}
    \left( 15234 \right) + \nabla^2 R^{\nabla} \left( 25314 \right)\\
    &  & \\
    & + & \nabla^2 R^{\nabla} \left( 43125 \right)_{_1} + \nabla^2 R^{\nabla}
    \left( 41235 \right)_{_1} + \nabla^2 R^{\nabla} \left( 42315
    \right)_{_1}\\
    &  & \\
    & + & \nabla^2 R^{\nabla} \left( 45123 \right)_{_2} + \nabla^2 R^{\nabla}
    \left( 45231 \right)_{_2} + \nabla^2 R^{\nabla} \left( 45312
    \right)_{_2}\\
    &  & \\
    & + & \nabla^2 R^{\nabla} \left( 53124 \right)_{_3} + \nabla^2 R^{\nabla}
    \left( 51234 \right)_{_3} + \nabla^2 R^{\nabla} \left( 52314
    \right)_{_3}\\
    &  & \\
    & + & \nabla^2 R^{\nabla} \left( 54123 \right)_{_4} + \nabla^2 R^{\nabla}
    \left( 54231 \right)_{_4} + \nabla^2 R^{\nabla} \left( 54312 \right)_{_4},
  \end{eqnarray*}
  where we denote by $\nabla^2 R^{\nabla} \left( \cdot \cdot \cdot \cdot \cdot
  \right)_{_j}$ the terms that summed up together equal zero thanks to the
  differential Bianchi identity for $j=1,3$ and thanks to the
  algebraic Bianchi identity for $j=2,4$. Using formula (\ref{CurvOPERAT}) we infer
  \begin{eqnarray*}
   && \left( \tmop{Circ} \tmop{Sym}_{3, 4, 5} \theta \right) \left( 12345
    \right) 
    \\
    &  & \\
    & = & \nabla^2 R^{\nabla} \left( 43125 \right)_{_1} + \nabla^2
    R^{\nabla} \left( 41235 \right)_{_1} + \nabla^2 R^{\nabla} \left( 42315
    \right)_{_1}\\
    &  & \\
    & + & \nabla^2 R^{\nabla} \left( 53124 \right)_{_2} + \nabla^2 R^{\nabla}
    \left( 51234 \right)_{_2} + \nabla^2 R^{\nabla} \left( 52314
    \right)_{_2}\\
    &  & \\
    & + & (R^{\nabla} . R^{\nabla}) \left( 34125 \right) + (R^{\nabla} .
    R^{\nabla}) \left( 14235 \right) + (R^{\nabla} . R^{\nabla}) \left( 24315
    \right)\\
    &  & \\
    & + & (R^{\nabla} . R^{\nabla}) \left( 35124 \right) + (R^{\nabla} .
    R^{\nabla}) \left( 15234 \right) + (R^{\nabla} . R^{\nabla}) \left( 25314
    \right),
  \end{eqnarray*}
  where as before we denote by $\nabla^2 R^{\nabla} \left( \cdot \cdot \cdot
  \cdot \cdot \right)_{_j}$ the terms that summed up together equal zero
  thanks to the differential Bianchi identity. We deduce the expression
  \begin{eqnarray}
    &&\left( \tmop{Circ} \tmop{Sym}_{3, 4, 5} \theta \right) \left( 12345
    \right) \nonumber
    \\\nonumber
    &  & \\
    & = & (R^{\nabla} . R^{\nabla}) \left( 34125 \right) + (R^{\nabla}
    . R^{\nabla}) \left( 14235 \right) + (R^{\nabla} . R^{\nabla}) \left(
    24315 \right)\nonumber
    \\\nonumber
    &  & \\
    & + & (R^{\nabla} . R^{\nabla}) \left( 35124 \right) + (R^{\nabla} .
    R^{\nabla}) \left( 15234 \right) + (R^{\nabla} . R^{\nabla}) \left( 25314
    \right)\, .\label{CrcSmTeta}
  \end{eqnarray}
  We set now for notation simplicity $\rho \assign R^{\nabla} . R^{\nabla}$
  and let 
  $$
  \Theta \left( 12345 \right) \assign \rho \left( 13245 \right) +
  \rho \left( 23415 \right)\,.
  $$
We observe that, by definition, the tensor
$$
\rho
  \in C^{\infty} \left( M, \Lambda^2 T^{\ast}_M
  \otimes_{_{\mathbbm{R}}} \Lambda^2 T^{\ast}_M \otimes_{_{\mathbbm{R}}}
  T^{\ast}_M \otimes_{_{\mathbbm{R}}} \mathbbm{C}T_M \right)\,,
  $$ 
  satisfies the
  circular identity with respect to its last three entries. We expand now the term
  $$
  \tmop{Circ} \tmop{Sym}_{3, 4, 5} \Theta\,.
  $$
  We observe the identities
  \begin{eqnarray*}
    \left( \tmop{Sym}_{3, 4, 5} \Theta \right) \left( 12345 \right) & = & \rho
    \left( 13245 \right) + \rho \left( 23415 \right)\\
    &  & \\
    & + & \rho \left( 13254 \right) + \rho \left( 23514 \right)\\
    &  & \\
    & + & \rho \left( 14235 \right) + \rho \left( 24315 \right)\\
    &  & \\
    & + & \rho \left( 14253 \right) + \rho \left( 24513 \right)\\
    &  & \\
    & + & \rho \left( 15234 \right) + \rho \left( 25314 \right)\\
    &  & \\
    & + & \rho \left( 15243 \right) + \rho \left( 25413 \right),
  \end{eqnarray*}
  \begin{eqnarray*}
    \left( \tmop{Sym}_{3, 4, 5} \Theta \right) \left( 23145 \right) & = & \rho
    \left( 21345 \right) + \rho \left( 31425 \right)\\
    &  & \\
    & + & \rho \left( 21354 \right) + \rho \left( 31524 \right)\\
    &  & \\
    & + & \rho \left( 24315 \right) + \rho \left( 34125 \right)\\
    &  & \\
    & + & \rho \left( 24351 \right) + \rho \left( 34521 \right)\\
    &  & \\
    & + & \rho \left( 25314 \right) + \rho \left( 35124 \right)\\
    &  & \\
    & + & \rho \left( 25341 \right) + \rho \left( 35421 \right),
  \end{eqnarray*}
  \begin{eqnarray*}
    \left( \tmop{Sym}_{3, 4, 5} \Theta \right) \left( 31245 \right) & = & \rho
    \left( 32145 \right) + \rho \left( 12435 \right)\\
    &  & \\
    & + & \rho \left( 32154 \right) + \rho \left( 12534 \right)\\
    &  & \\
    & + & \rho \left( 34125 \right) + \rho \left( 14235 \right)\\
    &  & \\
    & + & \rho \left( 34152 \right) + \rho \left( 14532 \right)\\
    &  & \\
    & + & \rho \left( 35124 \right) + \rho \left( 15234 \right)\\
    &  & \\
    & + & \rho \left( 35142 \right) + \rho \left( 15432 \right) .
  \end{eqnarray*}
  Summing up we obtain
  \begin{eqnarray*}
    &  & \left( \tmop{Circ} \tmop{Sym}_{3, 4, 5} \Theta \right) \left( 12345
    \right)\\
    &  & \\
    & = & \rho \left( 13245 \right)_{_1} + \rho \left( 23415 \right)_{_2} +
    \rho \left( 21345 \right)_{_3} + \rho \left( 31425 \right)_{_1} 
   \\
    &  & \\ 
    &+& \rho
    \left( 32145 \right)_{_2} + \rho \left( 12435 \right)_{_3}\\
    &  & \\
    & + & \rho \left( 13254 \right)_{_4} + \rho \left( 23514 \right)_{_5} +
    \rho \left( 21354 \right)_{_6} + \rho \left( 31524 \right)_{_4} 
    \\
    &  & \\
    &+& \rho
    \left( 32154 \right)_{_5} + \rho \left( 12534 \right)_{_6}\\
    &  & \\
    & + & \rho \left( 14235 \right)_{_7} + \rho \left( 24315 \right)_{_8} +
    \rho \left( 24315 \right)_{_8} + \rho \left( 34125 \right)_{_9} 
    \\
    &  & \\
    &+& \rho
    \left( 34125 \right)_{_9} + \rho \left( 14235 \right)_{_7}\\
    &  & \\
    & + & \rho \left( 14253 \right)_{_7} + \rho \left( 24513 \right)_{_8} +
    \rho \left( 24351 \right)_{_8} + \rho \left( 34521 \right)_{_9} 
  \\
    &  & \\  
    &+& \rho
    \left( 34152 \right)_{_9} + \rho \left( 14532 \right)_{_7}\\
    &  & \\
    & + & \rho \left( 15234 \right)_{_{10}} + \rho \left( 25314
    \right)_{_{11}} + \rho \left( 25314 \right)_{_{11}} + \rho \left( 35124
    \right)_{_{12}} 
  \\
    &  & \\  
    &+& \rho \left( 35124 \right)_{_{12}} + \rho \left( 15234
    \right)_{_{10}}\\
    &  & \\
    & + & \rho \left( 15243 \right)_{_{10}} + \rho \left( 25413
    \right)_{_{11}} + \rho \left( 25341 \right)_{_{11}} + \rho \left( 35421
    \right)_{_{12}} 
   \\
    &  & \\ 
    &+& \rho \left( 35142 \right)_{_{12}} + \rho \left( 15432
    \right)_{_{10}}\,,
  \end{eqnarray*}
  where we denote by $\rho \left( \cdot \cdot \cdot \cdot \cdot \right)_{_j}$
  the terms that we sum up together using the symmetries of $\rho$. We obtain
  \begin{eqnarray*}
    &  & \left( \tmop{Circ} \tmop{Sym}_{3, 4, 5} \Theta \right) \left( 12345
    \right)\\
    &  & \\
    & = & 2 \rho \left( 13245 \right) + 2 \rho \left( 23415 \right) + 2 \rho
    \left( 12435 \right)\\
    &  & \\
    & + & 2 \rho \left( 13254 \right) + 2 \rho \left( 23514 \right) + 2 \rho
    \left( 12534 \right)\\
    &  & \\
    & + & 3 \rho \left( 14235 \right) + 3 \rho \left( 24315 \right) + 3 \rho
    \left( 34125 \right)\\
    &  & \\
    & + & 3 \rho \left( 15234 \right) + 3 \rho \left( 25314 \right) + 3 \rho
    \left( 35124 \right) \,.
  \end{eqnarray*}
  We conclude the expression
  \begin{eqnarray}
    &  & \left[ \tmop{Circ}_{_{_{_{_{}}}}} \tmop{Sym}_{3, 4, 5} d_1^{\nabla}
    \left( \nabla R^{\nabla} \right)_2 \right] \left( 12345 \right)\nonumber
    \\\nonumber
    &  & \\
    & = & 2 \rho \left( 13245 \right) + 2 \rho \left( 23415 \right) + 2 \rho
    \left( 12435 \right)\nonumber
    \\\nonumber
    &  & \\
    & + & 2 \rho \left( 13254 \right) + 2 \rho \left( 23514 \right) + 2 \rho
    \left( 12534 \right)\nonumber
    \\\nonumber
    &  & \\
    & + & 2 \rho \left( 14235 \right) + 2 \rho \left( 24315 \right) + 2 \rho
    \left( 34125 \right)\nonumber
    \\\nonumber
    &  & \\
    & + & 2 \rho \left( 15234 \right) + 2 \rho \left( 25314 \right) + 2 \rho
    \left( 35124 \right)\, .\label{crcSmdDR}
  \end{eqnarray}
We expand now the term 
$$
\tmop{Circ}_{_{_{_{_{}}}}} \tmop{Sym}_{3, 4, 5} 
  ( \tilde{R}^{\nabla} \wedge_1 \tilde{R}^{\nabla} )\,.
  $$ From now on
  we will denote for notation simplicity $\left( 123 \right) \equiv R^{\nabla}
  \left( 123 \right)$ and 
  $$\left[ 123 \right] \assign \left( 123 \right) +
  \left( 132 \right)\,.
  $$ We observe the identities
  \begin{eqnarray*}
    \left[ \tmop{Sym}_{3, 4, 5}  ( \tilde{R}^{\nabla} \wedge_1
    \tilde{R}^{\nabla} )_{_{_{}}} \right] \left( 12345 \right) & = &
    \left[ 1 \left[ 234 \right] 5 \right] - \left[ 2 \left[ 134 \right] 5
    \right]\\
    &  & \\
    & + & \left[ 1 \left[ 235 \right] 4 \right] - \left[ 2 \left[ 135 \right]
    4 \right]\\
    &  & \\
    & + & \left[ 1 \left[ 243 \right] 5 \right] - \left[ 2 \left[ 143 \right]
    5 \right]\\
    &  & \\
    & + & \left[ 1 \left[ 245 \right] 3 \right] - \left[ 2 \left[ 145 \right]
    3 \right]\\
    &  & \\
    & + & \left[ 1 \left[ 253 \right] 4 \right] - \left[ 2 \left[ 153 \right]
    4 \right]\\
    &  & \\
    & + & \left[ 1 \left[ 254 \right] 3 \right] - \left[ 2 \left[ 154 \right]
    3 \right]\,,
  \end{eqnarray*}
  \begin{eqnarray*}
    \left[ \tmop{Sym}_{3, 4, 5}  ( \tilde{R}^{\nabla} \wedge_1
    \tilde{R}^{\nabla} )_{_{_{}}} \right] \left( 23145 \right) & = &
    \left[ 2 \left[ 314 \right] 5 \right] - \left[ 3 \left[ 214 \right] 5
    \right]\\
    &  & \\
    & + & \left[ 2 \left[ 315 \right] 4 \right] - \left[ 3 \left[ 215 \right]
    4 \right]\\
    &  & \\
    & + & \left[ 2 \left[ 341 \right] 5 \right] - \left[ 3 \left[ 241 \right]
    5 \right]\\
    &  & \\
    & + & \left[ 2 \left[ 345 \right] 1 \right] - \left[ 3 \left[ 245 \right]
    1 \right]\\
    &  & \\
    & + & \left[ 2 \left[ 351 \right] 4 \right] - \left[ 3 \left[ 251 \right]
    4 \right]\\
    &  & \\
    & + & \left[ 2 \left[ 354 \right] 1 \right] - \left[ 3 \left[ 254 \right]
    1 \right]\,,
  \end{eqnarray*}
  \begin{eqnarray*}
    \left[ \tmop{Sym}_{3, 4, 5}  ( \tilde{R}^{\nabla} \wedge_1
    \tilde{R}^{\nabla} )_{_{_{}}} \right] \left( 31245 \right) & = &
    \left[ 3 \left[ 124 \right] 5 \right] - \left[ 1 \left[ 324 \right] 5
    \right]\\
    &  & \\
    & + & \left[ 3 \left[ 125 \right] 4 \right] - \left[ 1 \left[ 325 \right]
    4 \right]\\
    &  & \\
    & + & \left[ 3 \left[ 142 \right] 5 \right] - \left[ 1 \left[ 342 \right]
    5 \right]\\
    &  & \\
    & + & \left[ 3 \left[ 145 \right] 2 \right] - \left[ 1 \left[ 345 \right]
    2 \right]\\
    &  & \\
    & + & \left[ 3 \left[ 152 \right] 4 \right] - \left[ 1 \left[ 352 \right]
    4 \right] \\
    &  & \\
    & + & \left[ 3 \left[ 154 \right] 2 \right] - \left[ 1 \left[ 354 \right]
    2 \right]\, .
  \end{eqnarray*}
  Summing up using the symmetries of $[\cdots]$ and $(\cdots)$ we obtain 
  \begin{eqnarray*}
    &&\left[ \tmop{Circ} \tmop{Sym}_{3, 4, 5}  ( \tilde{R}^{\nabla}
    \wedge_1 \tilde{R}^{\nabla} )_{_{_{}}} \right] \left( 12345 \right)
   \\
    &  & \\ 
    & = & 6 \left[ 1 \left( 234 \right) 5 \right] + 6 \left[ 2 \left( 314
    \right) 5 \right] + 6 \left[ 3 \left( 124 \right) 5 \right]\\
    &  & \\
    & + & 6 \left[ 1 \left( 235 \right) 4 \right] + 6 \left[ 2 \left( 315
    \right) 4 \right] + 6 \left[ 3 \left( 125 \right) 4 \right]\\
    &  & \\
    & + & 2 \left[ 1 \left[ 245 \right] 3 \right]_{_1} - 2 \left[ 2 \left[
    145 \right] 3 \right]_{_2}\\
    &  & \\
    & + & 2 \left[ 2 \left[ 345 \right] 1 \right]_{_3} - 2 \left[ 3 \left[
    245 \right] 1 \right]_{_1}\\
    &  & \\
    & + & 2 \left[ 3 \left[ 145 \right] 2 \right]_{_2} - 2 \left[ 1 \left[
    345 \right] 2 \right]_{_3} \,.
 \end{eqnarray*}   
We combine now the terms $[\cdot[\cdots]\cdot]_{_j}$ for each $j=1,2,3$ and we explicit and simplify them by using the algebraic Bianchi identity. We obtain 
 \begin{eqnarray*}   
  &&\left[ \tmop{Circ} \tmop{Sym}_{3, 4, 5}  ( \tilde{R}^{\nabla}
    \wedge_1 \tilde{R}^{\nabla} )_{_{_{}}} \right] \left( 12345 \right)
   \\
    &  & \\ 
    & = & 6 \left[ 1 \left( 234 \right) 5 \right] + 6 \left[ 2 \left( 314
    \right) 5 \right] + 6 \left[ 3 \left( 124 \right) 5 \right]\\
    &  & \\
    & + & 6 \left[ 1 \left( 235 \right) 4 \right] + 6 \left[ 2 \left( 315
    \right) 4 \right] + 6 \left[ 3 \left( 125 \right) 4 \right]\\
    &  & \\
    & + & 6 \left( 13 \left[ 245 \right] \right) + 6 \left( 32 \left[ 145
    \right] \right) + 6 \left( 21 \left[ 345 \right] \right) \,.
  \end{eqnarray*}
  Expanding further we obtain the complete expansion 
  \begin{eqnarray*}
   && \left[ \tmop{Circ} \tmop{Sym}_{3, 4, 5}  ( \tilde{R}^{\nabla}
    \wedge_1 \tilde{R}^{\nabla} )_{_{_{}}} \right] \left( 12345 \right)
  \\
    &  & \\  
    & = & 6 \left( 1 \left( 234 \right) 5 \right) + 6 \left( 15 \left( 234
    \right) \right)\\
    &  & \\
    & + & 6 \left( 2 \left( 314 \right) 5 \right) + 6 \left( 25 \left( 314
    \right) \right)\\
    &  & \\
    & + & 6 \left( 3 \left( 124 \right) 5 \right) + 6 \left( 35 \left( 124
    \right) \right)\\
    &  & \\
    & + & 6 \left( 1 \left( 235 \right) 4 \right) + 6 \left( 14 \left( 235
    \right) \right)\\
    &  & \\
    & + & 6 \left( 2 \left( 315 \right) 4 \right) + 6 \left( 24 \left( 315
    \right) \right)\\
    &  & \\
    & + & 6 \left( 3 \left( 125 \right) 4 \right) + 6\left( 34 \left( 125
    \right) \right)\\
    &  & \\
    & + & 6 \left( 13 \left( 245 \right) \right) + 6 \left( 13 \left( 254
    \right) \right)\\
    &  & \\
    & + & 6 \left( 32 \left( 145 \right) \right) + 6 \left( 32 \left( 154
    \right) \right)\\
    &  & \\
    & + & 6 \left( 21 \left( 345 \right) \right) + 6 \left( 21 \left( 354
    \right) \right) \,.
  \end{eqnarray*}
  Expanding the terms $\rho$ present in the expression (\ref{crcSmdDR}) we obtain the complete expansion of the term
  $$
  \left\{ \tmop{Circ} \tmop{Sym}_{3, 4, 5} \left[ 3 d_1^{\nabla}
    \left( \nabla R^{\nabla} \right)_2 -_{_{_{_{_{}}}}} 2 \tilde{R}^{\nabla}
    \wedge_1 \tilde{R}^{\nabla} \right] \right\} \left( 12345 \right)\,,
  $$
  given by
  \begin{eqnarray*}
    &  & \left\{ \tmop{Circ} \tmop{Sym}_{3, 4, 5} \left[ 3 d_1^{\nabla}
    \left( \nabla R^{\nabla} \right)_2 -_{_{_{_{_{}}}}} 2 \tilde{R}^{\nabla}
    \wedge_1 \tilde{R}^{\nabla} \right] \right\} \left( 12345 \right)\\
    &  & \\
    & = & 6 \left( 13 \left( 245 \right) \right)_{_1} - 6 \left( \left( 132
    \right) 45 \right)_{_2} - 6 \left( 2 \left( 134 \right) 5 \right)_{_3} - 6
    \left( 24 \left( 135 \right) \right)_{_4}\\
    &  & \\
    & + & 6 \left( 23 \left( 415 \right) \right)_{_5} - 6 \left( \left( 234
    \right) 15 \right)_{_6} - 6 \left( 4 \left( 231 \right) 5 \right)_{_2} - 6
    \left( 41 \left( 235 \right) \right)_{_7}\\
    &  & \\
    & + & 6 \left( 12 \left( 435 \right) \right)_{_8} - 6 \left( \left( 124
    \right) 35 \right)_{_9} - 6 \left( 4 \left( 123 \right) 5 \right)_{_2} - 6
    \left( 43 \left( 125 \right) \right)_{_{10}}\\
    &  & \\
    & + & 6 \left( 13 \left( 254 \right) \right)_{_{11}} - 6 \left( \left(
    132 \right) 54 \right)_{_{12}} - 6 \left( 2 \left( 135 \right) 4
    \right)_{_{13}} - 6 \left( 25 \left( 134 \right) \right)_{_{14}}\\
    &  & \\
    & + & 6 \left( 23 \left( 514 \right) \right)_{_{15}} - 6 \left( \left(
    235 \right) 14 \right)_{_{16}} - 6 \left( 5 \left( 231 \right) 4
    \right)_{_{12}} - 6 \left( 51 \left( 234 \right) \right)_{_{17}}\\
    &  & \\
    & + & 6 \left( 12 \left( 534 \right) \right)_{_{18}} - 6 \left( \left(
    125 \right) 34 \right)_{_{19}} - 6 \left( 5 \left( 123 \right) 4
    \right)_{_{12}} - 6 \left( 53 \left( 124 \right) \right)_{_{20}} \\
    &  & \\
    & + & 6 \left( 14 \left( 235 \right) \right)_{_{7}} - 6 \left( \left(
    142 \right) 35 \right)_{_9} - 6 \left( 2 \left( 143 \right) 5 \right)_{_3}
    - 6 \left( 23 \left( 145 \right) \right)_{_5}\\
    &  & \\
    & + & 6 \left( 24 \left( 315 \right) \right)_{_{4}} - 6 \left( \left(
    243 \right) 15 \right)_{_6} - 6 \left( 3 \left( 241 \right) 5 \right)_{_9}
    - 6 \left( 31 \left( 245 \right) \right)_{_1}\\
    &  & \\
    & + & 6 \left( 34 \left( 125 \right) \right)_{_{10}} - 6 \left( \left(
    341 \right) 25 \right)_{_3} - 6 \left( 1 \left( 342 \right) 5 \right)_{_6}
    - 6 \left( 12 \left( 345 \right) \right)_{_8}\\
    &  & \\
    & + & 6 \left( 15 \left( 234 \right) \right)_{_{17}} - 6 \left( \left( 152
    \right) 34 \right)_{_{19}} - 6 \left( 2 \left( 153 \right) 4
    \right)_{_{13}} - 6 \left( 23 \left( 154 \right) \right)_{_{15}}\\
    &  & \\
    & + & 6 \left( 25 \left( 314 \right) \right)_{_{14}} - 6 \left( \left( 253
    \right) 14 \right)_{_{16}} - 6 \left( 3 \left( 251 \right) 4
    \right)_{_{19}} - 6 \left( 31 \left( 254 \right) \right)_{_{11}}\\
    &  & \\
    & + & 6 \left( 35 \left( 124 \right) \right)_{_{20}} - 6 \left( \left( 351
    \right) 24 \right)_{_{13}} - 6 \left( 1 \left( 352 \right) 4
    \right)_{_{16}} - 6 \left( 12 \left( 354 \right) \right)_{_{18}}\\
    &  & \\
    & - & 12 \left( 1 \left( 234 \right) 5 \right)_{_6} - 12 \left( 15 \left(
    234 \right) \right)_{_{17}}\\
    &  & \\
    & - & 12 \left( 2 \left( 314 \right) 5 \right)_{_3} - 12 \left( 25 \left(
    314 \right) \right)_{_{14}}\\
    &  & \\
    & - & 12 \left( 3 \left( 124 \right) 5 \right)_{_9} - 12 \left( 35 \left(
    124 \right) \right)_{_{20}}\\
    &  & \\
    & - & 12 \left( 1 \left( 235 \right) 4 \right)_{_{16}} - 12 \left( 14
    \left( 235 \right) \right)_{_7}\\
    &  & \\
    & - & 12 \left( 2 \left( 315 \right) 4 \right)_{_{13}} - 12 \left( 24
    \left( 315 \right) \right)_{_4}\\
    &  & \\
    & - & 12 \left( 3 \left( 125 \right) 4 \right)_{_{19}} - 12 \left( 34
    \left( 125 \right) \right)_{_{10}}
     -  12 \left( 13 \left( 245 \right) \right)_{_1} - 12 \left( 13 \left(
    254 \right) \right)_{_{11}}\\
    &  & \\
    & - & 12 \left( 32 \left( 145 \right) \right)_{_5} - 12 \left( 32 \left(
    154 \right) \right)_{_{15}}
 -  12 \left( 21 \left( 345 \right) \right)_{_8} - 12 \left( 21 \left(
    354 \right) \right)_{_{18}} \,,
  \end{eqnarray*}
where as before we denote by $\left( \cdot \cdot \cdot \cdot \cdot \right)_{_j}$
  the terms that we sum up together using the symmetries of the curvature tensor $R^{\nabla}$. 
All the terms summed up together cancel up. This is obvious for all the sub indexes $j$ with the exception of $j = 3, 6, 9, 13, 16, 19$ for which me must provide the detail of the computation. Indeed for $j = 3$ we have
  \begin{eqnarray*}
    &  &  - 6 \left( \left( 341 \right)
    25 \right) - 6 \left( 2 \left( 134 \right) 5 \right) - 6 \left( 2 \left(
    143 \right) 5 \right) - 12 \left( 2 \left( 314 \right) 5 \right)\\
    &  & \\
    & = &  6 \left( 2 \left( 341
    \right) 5 \right) + 6 \left( 2 \left( 413 \right) 5 \right)-6 \left( 2 \left( 314
    \right) 5 \right)\\
    &  & \\
    & = &  - 6 \left( 2 \left( 134
    \right) 5 \right)- 6 \left( 2 \left( 314
    \right) 5 \right)\\
    &  & \\
    & = & 0 \,.
  \end{eqnarray*}
  For $j = 6$ we have
  \begin{eqnarray*}
    &  &  - 12 \left( 1 \left( 234
    \right) 5 \right) - 6 \left( \left( 234 \right) 15 \right) - 6 \left(
    \left( 243 \right) 15 \right) - 6 \left( 1 \left( 342 \right) 5 \right)\\
    &  & \\
    & = &  6  \left( 1 \left(
    243 \right) 5 \right)  + 6 \left( 1 \left( 432 \right) 5 \right)-6 \left( 1 \left( 234
    \right) 5 \right)\\
    &  & \\
    & = &  - 6 \left( 1 \left( 324
    \right) 5 \right)-6 \left( 1 \left( 234
    \right) 5 \right)\\
    &  & \\
    & = &  0 \,.
  \end{eqnarray*}
  For $j = 9$ we have
  \begin{eqnarray*}
    &  &  - 6 \left( \left( 124 \right)
    35 \right) - 6 \left( \left( 142 \right) 35 \right) - 6 \left( 3 \left(
    241 \right) 5 \right) - 12 \left( 3 \left( 124 \right) 5 \right)\\
    &  & \\
    & = &  6 \left( 3 \left( 142
    \right) 5 \right) + 6 \left( 3 \left( 421 \right) 5 \right)-6 \left( 3 \left( 124
    \right) 5 \right) \\
    &  & \\
    & = & - 6 \left( 3 \left( 214
    \right) 5 \right)-6 \left( 3 \left( 124
    \right) 5 \right) \\
    &  & \\
    & = & 0\,.
  \end{eqnarray*}
  For $j = 13$ we have
  \begin{eqnarray*}
    &  & - 6 \left( \left( 351 \right)
    24 \right) - 6 \left( 2 \left( 135 \right) 4 \right) - 6 \left( 2 \left(
    153 \right) 4 \right) - 12 \left( 2 \left( 315 \right) 4 \right)\\
    &  & \\
    & = &  6 \left( 2 \left( 351
    \right) 4 \right) + 6 \left( 2 \left( 513 \right) 4 \right)-6 \left( 2 \left( 315
    \right) 4 \right)\\
    &  & \\
    & = & - 6 \left( 2 \left( 135
    \right) 4 \right)-6 \left( 2 \left( 315
    \right) 4 \right)\\
    &  & \\
    & = & 0 \,.
  \end{eqnarray*}
  For $j = 16$ we have
  \begin{eqnarray*}
    &  &  - 12 \left( 1 \left( 235
    \right) 4 \right) - 6 \left( \left( 235 \right) 14 \right) - 6 \left(
    \left( 253 \right) 14 \right) - 6 \left( 1 \left( 352 \right) 4 \right)\\
    &  & \\
    & = &  6 \left( 1 \left( 253
    \right) 4 \right) + 6 \left( 1 \left( 532 \right) 4 \right)-6 \left( 1 \left( 235
    \right) 4 \right) \\
    &  & \\
    & = &  - 6 \left( 1 \left( 325
    \right) 4 \right)-6 \left( 1 \left( 235
    \right) 4 \right) \\
    &  & \\
    & = &  0\,.
  \end{eqnarray*}
  For $j = 19$ we have
  \begin{eqnarray*}
    &  & - 6 \left( \left( 125 \right)
    34 \right) - 6 \left( \left( 152 \right) 34 \right) - 6 \left( 3 \left(
    251 \right) 4 \right) - 12 \left( 3 \left( 125 \right) 4 \right)\\
    &  & \\
    & = &  6 \left( 3 \left( 152
    \right) 4 \right) + 6 \left( 3 \left( 521 \right) 4 \right)-6 \left( 3 \left( 125
    \right) 4 \right)\\
    &  & \\
    & = & - 6 \left( 3 \left( 215
    \right) 4 \right)-6 \left( 3 \left( 125
    \right) 4 \right)\\
    &  & \\
    & = &  0 \,.
  \end{eqnarray*}
We infer the required identity (\ref{KeyFrstIntegReduc}) in the statement of proposition \ref{Vanish}.
\end{proof}

\section{The almost complex structure associated to a connection over the tangent bundle}

This section is not needed for the proof of the results in the paper. We include it in order to clarify the integrability of an $M$-totally real almost
complex structure over $T_M$  associated to the horizontal distribution of a linear connection. 
We include it also to remind and to prove in modern terms a well known result due to Dombrowsky \cite{Dom}. 

It is well known (see \cite{Dom}) that we can construct an $M$-totally real almost
complex structure over $T_M$ by using the horizontal distribution $\mathcal{H}
\subset T_M$ associated to a linear connection $\nabla$ acting on the sections
of $T_M$. Indeed in this case we set $\alpha_{\eta} \assign H_{\eta}$ and
$B_{\eta} \assign \mathbbm{I}_{T_M, \pi \left( \eta \right)}$, where $\eta
\longmapsto H_{\eta}$ is the horizontal map associated to $\mathcal{H}$. We
will denote $J_{\mathcal{H}} \assign J_A$. If we define for any $\eta \in
T_{M, p}$ the vertical projection $\tmop{Vert}_{\eta} : T_{T_M, \eta}
\longrightarrow T_{T_{M, p}, \eta}$ as
\begin{eqnarray*}
  \tmop{Vert}_{\eta}  \assign  \mathbbm{I}_{T_{T_M, \eta}} - H_{\eta}
  d_{\eta} \pi\,,
\end{eqnarray*}
where $\pi : T_M \longrightarrow M$ is the canonical projection, then
\begin{eqnarray*}
  J_{\mathcal{H}, \eta} \assign  - H_{\eta}\, T^{- 1}_{\eta}
  \tmop{Vert}_{\eta} + \,T_{\eta}\, d_{\eta} \pi \,.
\end{eqnarray*}
If we decompose any vector $\xi \in \mathbbm{C}T_{T_M, \eta}$ in its
horizontal and vertical parts $\xi = \xi^h + \xi^v$ with $\xi^v \assign
\tmop{Vert}_{\eta} \left( \xi \right)$ then we have the expressions
\begin{eqnarray*}
  J_{\mathcal{H}, \eta} \xi & = & - H_{\eta} \,T^{- 1}_{\eta} \xi^v + T_{\eta}\,
  d_{\eta} \pi \,\xi^h,\\
  &  & \\
  \left( J_{\mathcal{H}, \eta} \xi \right)^h & = & - H_{\eta} \,T^{- 1}_{\eta}
  \xi^v,\\
  &  & \\
  \left( J_{\mathcal{H}, \eta} \xi \right)^v & = & T_{\eta} \,d_{\eta} \pi\,
  \xi^h .
\end{eqnarray*}
We infer
\begin{eqnarray*}
  \xi_{J_{\mathcal{H}}}^{0, 1} \left( \eta \right) & = & \frac{1}{2}  \left[
  \xi^h - i \,H_{\eta}  \, T^{- 1}_{\eta} \xi^v + \xi^v + i\, T_{\eta_{_{_{}}}}
  d_{\eta} \pi_{} \,\xi^h \right]\\
  &  & \\
  & = & \frac{1}{2}  \left[ \xi^h +_{_{_{_{}}}} H_{\eta}\, \mu + i\, T_{\eta}
  \left( d_{\eta} \pi_{} \xi^h + \mu \right) \right],
\end{eqnarray*}
with $\mu \assign - i\, T^{- 1}_{\eta} \xi^v$. We notice also the identity
\begin{equation}
  \label{CXConnDist} T_{T_M, J_{\mathcal{H}}, \eta}^{0, 1} = \frac{1}{2} 
  \left( H_{\eta} + i \,T_{\eta} \right) \mathbbm{C}T_{M, p}\,,
\end{equation}
for any any $\eta \in T_{X, p}$. The distribution $T_{T_M,
J_{\mathcal{H}}}^{0, 1}$ is horizontal, but the associated map does not
satisfies the condition (\ref{multHoriz}) of linear connections thanks to the
identity (\ref{transl}). Therefore this distribution does not identify a
linear connection. However its integrability implies that the vector bundle
$T_M$ is flat. Indeed we have the following well known lemma due to
Dombrowsky \cite{Dom}.

\begin{lemma}
  \label{affineLm}The torsion form $\tau^{J_{\mathcal{H}}}$ of the almost
  complex structure $J_{\mathcal{H}}$ satisfies at the point $\eta \in T_M$ in
  the directions $V_1, V_2 \in T_{T_M, J_{\mathcal{H}}, \eta}^{0, 1}$ the
  identity
  \begin{eqnarray*}
    8 \tau^{J_{\mathcal{H}}} \left( V_1, V_2 \right) \left( \eta \right) & = &
    - \;H_{\eta} \left[ \tau^{\nabla} \left( v_1, v_2 \right) + i\, R^{\nabla}
    \left( v_1, v_2 \right) \eta \right]\\
    &  & \\
    & + & T_{\eta} \left[ i \,\tau^{\nabla} \left( v_1, v_2 \right)_{_{_{_{}}}}
    - R^{\nabla} \left( v_1, v_2 \right) \eta \right],
  \end{eqnarray*}
  where $R^{\nabla} \assign \nabla^2$ is the complex linear extension of the
  curvature tensor of $\nabla$, where $\tau^{\nabla}$ is the torsion of the
  complex connection $\nabla$ and where $v_j \assign d_{\eta} \pi_{} V_j$, $j
  = 1, 2$. In particular $J_{\mathcal{H}}$ is a complex structure if and only
  if the linear connection $\nabla$ is flat and torsion free. 
\end{lemma}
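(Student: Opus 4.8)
The plan is to exploit that the torsion (Nijenhuis) form of an almost complex structure is tensorial, so that it suffices to evaluate it on convenient extensions of $V_1,V_2$ to local $(0,1)$-vector fields. By the identity (\ref{CXConnDist}) the space $T_{T_M,J_{\mathcal H},\eta}^{0,1}$ is spanned by the vectors $\xi^H_\eta+i\,\xi^V_\eta$, where for a vector field $\xi$ on $M$ I write $\xi^H_\eta\assign H_\eta\xi$ and $\xi^V_\eta\assign T_\eta\xi$ for its horizontal and vertical lifts. Accordingly I would pick vector fields $X_1,X_2$ on $M$ and set $V_j\assign X_j^H+i\,X_j^V$, so that $V_j\in T_{T_M,J_{\mathcal H}}^{0,1}$ and $d_\eta\pi\,V_j=X_j(p)=v_j$. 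The first reduction is the elementary fact that on $(0,1)$-vectors the torsion collapses to the $(1,0)$-part of the Lie bracket: since $J_{\mathcal H}V_j=-i\,V_j$, expanding $[J_{\mathcal H}V_1,J_{\mathcal H}V_2]-[V_1,V_2]-J_{\mathcal H}[J_{\mathcal H}V_1,V_2]-J_{\mathcal H}[V_1,J_{\mathcal H}V_2]$ shows $\tau^{J_{\mathcal H}}(V_1,V_2)=c\,[V_1,V_2]^{1,0}$ for a universal constant $c$ fixed by the author's normalization of the torsion form (this is what ultimately produces the factor $8$).

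The core of the argument is then the computation of $[V_1,V_2]$ by means of the standard structure equations for the lifts attached to $\nabla$ (cf. Dombrowski \cite{Dom} and the appendix), namely $[\xi^V,\zeta^V]=0$, $[\xi^H,\zeta^V]=(\nabla_\xi\zeta)^V$ and $[\xi^H,\zeta^H]_\eta=[\xi,\zeta]^H_\eta-(R^{\nabla}(\xi,\zeta)\eta)^V$. Expanding
\[
[V_1,V_2]=[X_1^H,X_2^H]+i[X_1^H,X_2^V]+i[X_1^V,X_2^H]-[X_1^V,X_2^V]
\]
and substituting these relations, the horizontal contribution together with the term $i[X_1,X_2]^V$ recombine, via $\nabla_{X_1}X_2-\nabla_{X_2}X_1=[X_1,X_2]+\tau^{\nabla}(X_1,X_2)$, into the new $(0,1)$-field $[X_1,X_2]^H+i[X_1,X_2]^V$, which contributes nothing to the $(1,0)$-part. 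What survives is the purely vertical remainder $\bigl(-R^{\nabla}(v_1,v_2)\eta+i\,\tau^{\nabla}(v_1,v_2)\bigr)^V$, where I have used that $R^{\nabla}$ and $\tau^{\nabla}$ are tensorial and evaluated at $p$.

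To conclude I would decompose this vertical vector into its $J_{\mathcal H}$-bitype parts. From (\ref{CXConnDist}) and its conjugate one reads off $T^{1,0}=\tfrac12(H-iT)\mathbbm{C}T_M$ and $T^{0,1}=\tfrac12(H+iT)\mathbbm{C}T_M$, whence an arbitrary vertical vector $T_\eta\zeta$ has $(1,0)$-part $\tfrac12\bigl(H_\eta(i\zeta)+T_\eta\zeta\bigr)$. Applying this with $\zeta=-R^{\nabla}(v_1,v_2)\eta+i\,\tau^{\nabla}(v_1,v_2)$ produces, up to the constant $c$, exactly
\[
-\,H_\eta\bigl[\tau^{\nabla}(v_1,v_2)+i\,R^{\nabla}(v_1,v_2)\eta\bigr]+T_\eta\bigl[i\,\tau^{\nabla}(v_1,v_2)-R^{\nabla}(v_1,v_2)\eta\bigr],
\]
which is the asserted identity once the normalization is chosen so that the prefactor equals $8$. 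For the last assertion, $J_{\mathcal H}$ is integrable precisely when $\tau^{J_{\mathcal H}}$ vanishes at every $\eta$ for all $v_1,v_2$; since $H_\eta$ and $T_\eta$ are injective with $H_\eta(\mathbbm{C}T_M)\cap T_\eta(\mathbbm{C}T_M)=0$ (horizontal versus vertical), this forces both bracketed expressions to vanish. Evaluating at $\eta=0_p$ gives $\tau^{\nabla}=0$, and then $R^{\nabla}(v_1,v_2)\eta=0$ for all $\eta$ gives $R^{\nabla}=0$; conversely a flat, torsion-free $\nabla$ makes the right-hand side vanish identically.

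The step I expect to be the main obstacle is the bracket bookkeeping of the second paragraph combined with the bitype decomposition of the third: the recombination of the horizontal part into a genuine $(0,1)$-field, and the simultaneous tracking of the several factors of $i$ together with the $\tfrac12$'s from the bitype projection, are precisely where sign and normalization errors arise, and it is this bookkeeping that pins down the overall constant $8$. Everything else reduces to tensoriality, the cited lift identities, and the injectivity of $H_\eta$ and $T_\eta$ with complementary images.
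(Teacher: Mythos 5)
Your proposal is correct and takes essentially the same route as the paper's proof: both compute the Lie bracket of $(0,1)$-lifts of the form $(H+iT)\,\xi$ via the structure equations $[\xi^H,\zeta^V]=(\nabla_\xi\zeta)^V$ and $[\xi^H,\zeta^H]_\eta=[\xi,\zeta]^H_\eta-(R^{\nabla}(\xi,\zeta)\eta)^V$ (the paper's identity (\ref{BraketConn}) and lemma \ref{TorsCurv}), and then apply the same $(1,0)$-projection formula for vertical vectors. The only cosmetic differences are that the paper normalizes the extensions as $\Xi_j=\frac{1}{2}(H+iT)\xi_j$ and disposes of the horizontal term by choosing extensions with $[\xi_1,\xi_2]\,\pi(\eta)=0$, whereas you keep $[X_1,X_2]$ and note that $(H+iT)[X_1,X_2]$ is of type $(0,1)$ by (\ref{CXConnDist}) and so dies under the projection --- an equally valid piece of bookkeeping that, together with your honest deferral of the overall constant to the (implicit) normalization of $\tau^{J_{\mathcal H}}$, reproduces the stated identity and the flat-and-torsion-free characterization exactly as in the paper.
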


\begin{proof}
  Let $\xi_j$ be vector field local extensions of $v_j$ such that $[\xi_1,
  \xi_2] \pi \left( \eta \right) = 0$. Then
  \begin{eqnarray*}
    \Xi_j  \assign  \frac{1}{2}  \left( H + i\, T \right) \xi_j\,,
  \end{eqnarray*}
  are local vector field extensions of $V_j$. We expand the bracket
  \begin{eqnarray*}
    4\, [\Xi_1, \Xi_2] \left( \eta \right) & = & \left( [H \xi_1, H \xi_2]
    \noplus + i\, [H \xi_1, T \xi_2] \noplus + i\, [T \xi_1, H \xi_2] \noplus - [T
    \xi_1, T \xi_2] \noplus_{_{_{_{}}}} \right) \left( \eta \right)\\
    &  & \\
    & = & H_{\eta} [\xi_1, \xi_2] - T_{\eta} [R^{\nabla} \left( v_1, v_2
    \right) \eta] + i\, T_{\eta} [\nabla_{\xi_1} \xi_2 - \nabla_{\xi_2} \xi_1]\, .
  \end{eqnarray*}
  The last equality follows from to the computation at the end of the proof of
  lemma \ref{TorsCurv} and thanks to the identity (\ref{BraketConn}) in the
  appendix. (We notice that $[T \xi_1, T \xi_2] \equiv 0$, since the vector
  fields $T \xi_1$ are tangent constant along the fibers). Thanks to the
  assumption $[\xi_1, \xi_2] \pi \left( \eta \right) = 0$, we infer the
  equality
  \begin{eqnarray*}
    4\, [\Xi_1, \Xi_2] \left( \eta \right)  =  T_{\eta} \left[ i\, \tau^{\nabla}
    \left( v_1, v_2 \right)_{_{_{_{}}}} - R^{\nabla} \left( v_1, v_2 \right)
    \eta \right] .
  \end{eqnarray*}
  The required formula follows from the identity
  \begin{eqnarray*}
    \xi_{J_{\mathcal{H}}}^{1, 0} \left( \eta \right)  =  \frac{1}{2}  \left[
    \xi^h + i\, H_{\eta}  \, T^{- 1}_{\eta} \xi^v + \xi^v - i\, T_{\eta}\, d_{\eta}
    \pi_{_{_{_{_{}}}}} \xi^h \right] .
  \end{eqnarray*}
  The fact that that the distribution $T_{T_M, J_{\mathcal{H}}}^{1, 0}$ is
  horizontal implies that $\tau^{J_{\mathcal{H}}} \left( V_1, V_2 \right)
  \left( \eta \right)$ vanishes for all $V_j$ if and only if the quantity
  \[ \tau^{\nabla} \left( v_1, v_2 \right) + i\, R^{\nabla} \left( v_1, v_2
     \right) \eta\,, \]
  vanishes for all $v_j$. In particular for real vectors $v_j$ this implies
  that $R^{\nabla}$ and $\tau^{\nabla}$ vanish at the point $\pi \left( \eta
  \right)$.
\end{proof}

We observe that a connection over $T_M$ is flat and torsion free if and only
if there exist local parallel frames with vanishing Lie brackets.

\section{Appendix}
In this appendix we provide some well known basic facts about the geometric theory of linear connections needed for the reading of the paper. (See also \cite{Gau}). We strongly recommend its reading even to experts.
\subsection{The horizontal distribution associated to a linear connection}

We start with the following fact.

\begin{lemma}
  \label{addLm}Let $\nabla$ be a linear connection acting on sections of a
  vector bundle $E$ over a manifold $M$. Then the linear map
  $$
    T_{M, p} \ni \xi \longmapsto H_{\eta} \left( \xi \right)  \assign  d_p
    \sigma \left( \xi \right) - T_{\eta} \nabla_{\xi} \sigma \in T_{E, \eta}\,,
  $$
  is independent of the sections $\sigma$ such that $\sigma \left( p \right) =
  \eta$.
\end{lemma}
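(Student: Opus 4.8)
The plan is to reduce the independence claim to a computation with the difference of two competing sections. Given $\sigma_1,\sigma_2\in C^{\infty}(M,E)$ with $\sigma_1(p)=\sigma_2(p)=\eta$, I set $H^{(j)}_{\eta}(\xi)\assign d_p\sigma_j(\xi)-T_{\eta}\nabla_{\xi}\sigma_j$ for $j=1,2$. By linearity of the differential and of the covariant derivative, their difference reads
\[
H^{(1)}_{\eta}(\xi)-H^{(2)}_{\eta}(\xi)=\left(d_p\sigma_1(\xi)-d_p\sigma_2(\xi)\right)-T_{\eta}\nabla_{\xi}s,\qquad s\assign\sigma_1-\sigma_2,
\]
where $s\in C^{\infty}(M,E)$ satisfies $s(p)=0_p$. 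Thus it suffices to prove the identity $d_p\sigma_1(\xi)-d_p\sigma_2(\xi)=T_{\eta}\nabla_{\xi}s$ for every $\xi\in T_{M,p}$.

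First I would check that the left-hand side is vertical. Differentiating $\pi_E\circ\sigma_j=\tmop{id}_M$ at $p$ gives $d_{\eta}\pi_E\circ d_p\sigma_j=\mathbbm{I}_{T_{M,p}}$, so that $d_{\eta}\pi_E\left(d_p\sigma_1(\xi)-d_p\sigma_2(\xi)\right)=0$. Hence the difference lies in $\tmop{Ker}d_{\eta}\pi_E=T_{\eta}(E_p)$, using the equality recalled just after (\ref{transISO}), and can be written uniquely as $T_{\eta}w$ for some $w\in E_p$. The whole content of the lemma is then the identification $w=\nabla_{\xi}s$.

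This identification is where the hypothesis $s(p)=0_p$ enters, and I would carry it out in a local frame $(e_a)$ of $E$ near $p$, writing $s=s^a e_a$ and $\sigma_j=\sigma_j^a e_a$. On the one hand $\nabla_{\xi}s=d_p s^a(\xi)\,e_a(p)+s^a(p)(\nabla_{\xi}e_a)(p)=d_p s^a(\xi)\,e_a(p)$, the connection term dropping out precisely because $s^a(p)=0$; in particular $\nabla_{\xi}s$ is the naive component-wise derivative and is independent of $\nabla$. On the other hand, in the induced trivialization the base components of $d_p\sigma_1(\xi)$ and $d_p\sigma_2(\xi)$ are both equal to $\xi$ (by the relation $d_{\eta}\pi_E\circ d_p\sigma_j=\mathbbm{I}$) and cancel, while their vertical components are $d_p\sigma_j^a(\xi)\,e_a(p)$; thus $d_p\sigma_1(\xi)-d_p\sigma_2(\xi)=T_{\eta}\!\left(d_p s^a(\xi)\,e_a(p)\right)=T_{\eta}\nabla_{\xi}s$. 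Feeding this back into the displayed difference gives $H^{(1)}_{\eta}=H^{(2)}_{\eta}$, which is the asserted independence.

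The only delicate point is this last bookkeeping: the first-order (derivative-of-components) terms contributed by $d_p\sigma$ and by the non-tensorial part of $\nabla_{\xi}\sigma$ must match so as to cancel in the difference, and the vanishing $s(p)=0_p$ is exactly what removes the residual Christoffel contribution. I expect the trivialization computation of the vertical parts to be the crux; the reduction to $s=\sigma_1-\sigma_2$ and the verticality step are purely formal.
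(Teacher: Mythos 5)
Your argument is correct, and it is packaged differently from the paper's, although the two share the same computational core. The paper proves lemma \ref{addLm} by computing $H_{\sigma(p)}(\xi)$ outright in a local trivialization $\theta_e$: writing $\sigma = e\cdot f$ and $\nabla\sigma = e\otimes(df + A\cdot f)$, the first-order terms $d_pf(\xi)$ contributed by $d_p\sigma(\xi)$ and by $T_\eta\nabla_\xi\sigma$ cancel, leaving the closed formula $H_\eta(\xi) = d_{p,h}\theta_e\left[\xi\oplus\left(-A(\xi)\cdot h\right)\right]$ with $\eta = e(p)\cdot h$, which is manifestly independent of the section. You instead subtract two competing sections, observe frame-freely that the difference of the two candidate lifts is vertical (so lies in $\tmop{Ker}\,d_\eta\pi_E = T_\eta(E_p)$), and identify the vertical part as $T_\eta\nabla_\xi s$ with $s\assign\sigma_1-\sigma_2$, the hypothesis $s(p)=0_p$ killing the Christoffel contribution $A(\xi)\cdot s(p)$. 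In the paper's version the derivative terms cancel and the zero-order connection-form term survives, depending only on $h$; in yours the zero-order term dies because $s(p)=0_p$ and the derivative terms are matched against $d_p\sigma_1(\xi)-d_p\sigma_2(\xi)$ — the same cancellation read subtractively rather than additively. What your route buys: the verticality step is coordinate-free, and the role of the constraint $\sigma(p)=\eta$ is isolated in the single clean identity $d_p\sigma_1(\xi)-d_p\sigma_2(\xi)=T_\eta\nabla_\xi s$ for sections vanishing at $p$. What the paper's buys: the explicit local expression for $H_\eta(\xi)$, useful beyond mere well-definedness. One point you should make explicit in the "bookkeeping" you flag as delicate: when you read off the vertical components as $d_p\sigma_j^a(\xi)\,e_a(p)$, you are invoking precisely the identification the paper records as (\ref{Ttrivial}), namely $T_{\sigma(p)}\cdot\theta_{e\mid\{p\}\times\mathbbm{R}^r} = d_{p,f(p)}\theta_{e\mid 0\oplus\mathbbm{R}^r}$; because you take the difference of two differentials at the \emph{same} point $(p,h)$ of the \emph{same} trivialization, the trivialization-dependent horizontal parts cancel and this identification renders your vertical vector $w$ unambiguous. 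With that spelled out, your proof is complete.
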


\begin{proof}
  Let $e = \left( e_k \right)^r_{k = 1}$ be a local frame of $E$ over an open
  set $U \subset M$. We consider the local expression $\sigma = e \cdot f$
  with $f \in C^1 \left( U, \mathbbm{R}^r \right)$. Let $A \in C^{\infty}
  \left( U, T^{\ast}_M \otimes \tmop{Matrix}_{r \times r} \left( \mathbbm{R}
  \right) \right)$ be the connection form of $\nabla$ with respect to the
  local frame $e$, i.e $\nabla e = e \cdot A$. Then $\nabla \sigma = e \otimes
  (d f + A \cdot f)$. If we denote by $\theta_e : U \times \mathbbm{R}^r
  \longrightarrow E_{\mid U}$ then the differential of this map at the point
  $\left( p, f \left( p \right) \right)$ provides an isomorphism
  \begin{eqnarray*}
    d_{p, f \left( p \right)} \theta_e : T_{U, p} \oplus \mathbbm{R}^r &
    \longrightarrow & T_{E, \sigma \left( p \right)} .
  \end{eqnarray*}
  With respect to it, the equality hold
  \begin{eqnarray*}
    d_{p, f \left( p \right)} \theta_e  \left[ \xi \oplus d_p f \left( \xi
    \right) \right] & = & d_p \sigma \left( \xi \right) .
  \end{eqnarray*}
  We observe now the linear identity $d \tau_{\sigma \left( p \right)} \cdot
  d_{p, 0} \theta_{e \mid 0 \oplus \mathbbm{R}^r} = d_{p, f \left( p \right)}
  \theta_{e \mid 0 \oplus \mathbbm{R}^r}$. We infer
  \begin{equation}
    \label{Ttrivial} T_{\sigma \left( p \right)} \cdot \theta_{e \mid \{p\}
    \times \mathbbm{R}^r} = d_{p, f \left( p \right)} \theta_{e \mid 0 \oplus
    \mathbbm{R}^r},
  \end{equation}
  and
  \begin{eqnarray*}
    &&T_{\sigma \left( p \right)} \left[ e \left( p \right) \cdot \left( d_p f
    \left( \xi \right) + A \left( \xi \right) \cdot f \left( p
    \right)_{_{_{_{}}}} \right) \right] \\
    \\
    &=&
    d_{p, f \left( p \right)}
    \theta_e  \left[ 0 \oplus \left( d_p f \left( \xi \right) + A \left( \xi
    \right) \cdot f \left( p \right)_{_{_{_{}}}} \right) \right],\\
    &  & \\
    T_{\sigma \left( p \right)} \nabla_{\xi} \sigma & = & d_{p, f \left( p
    \right)} \theta_e  \left[ 0 \oplus \left( d_p f \left( \xi \right) + A
    \left( \xi \right) \cdot f \left( p \right)_{_{_{_{}}}} \right) \right] .
  \end{eqnarray*}
  Thus
  \begin{eqnarray*}
    H_{\sigma \left( p \right)} \left( \xi \right) & = & d_{p, f \left( p
    \right)} \theta_e  \left[ \xi \oplus \left( - A \left( \xi \right) \cdot f
    \left( p \right)_{_{_{_{}}}} \right) \right],
  \end{eqnarray*}
  i.e. if $\eta = e \cdot h$, then
  \begin{eqnarray*}
    H_{\eta} \left( \xi \right) & = & d_{p, h} \theta_e  \left[ \xi \oplus
    \left( - A \left( \xi \right) \cdot h_{_{_{_{}}}} \right) \right],
  \end{eqnarray*}
  which shows the required conclusion. 
\end{proof}

Let $\pi_E : E \longrightarrow M$ be the projection map and notice the
equality $\tmop{Ker} d_{\eta} \pi_E = T_{E_p, \eta}$, for any $\eta \in E_p$.
The identity $\pi_E \circ \sigma = \tmop{id}_M$ implies
\begin{eqnarray*}
  d_{\sigma \left( p \right)} \pi_E \circ d_p \sigma \left( \xi \right) & = &
  \xi \,.
\end{eqnarray*}
We deduce the identity $d_{\eta} \pi_E \circ H_{\eta} \left( \xi \right) =
\xi$. We define the horizontal distribution $\mathcal{H} \subset T_E$
associated to $\nabla$ as
\begin{eqnarray*}
  \mathcal{H}_{\eta} & \assign & H_{\eta} \left( T_{M, \pi_E \left( \eta
  \right)} \right) \subset T_{E, \eta} \,.
\end{eqnarray*}
We notice now that the tangent bundle of the vector bundle $E \oplus E$ is
given by the fibers
\begin{eqnarray*}
  T_{E \oplus E, \left( \eta_1, \eta_2 \right)} & = & \left\{ \left( v_1, v_2
  \right) \in T_{E, \eta_1} \oplus T_{E, \eta_2} \mid d_{\eta_1} \pi_E \left(
  v_1 \right) = d_{\eta_2} \pi_E \left( v_2 \right)_{_{_{_{_{}}}}} \right\},
\end{eqnarray*}
and that the differential of the sum bundle map $s m_{_E} : E \oplus E
\longrightarrow E$ satisfies
\begin{eqnarray*}
  d_{\left( \eta_1, \eta_2 \right)} \left( s m_{_E} \right) \left( v_1, v_2
  \right) & = & T_{\eta_1 + \eta_2} \left( T^{- 1}_{\eta_1} v_1 + T^{-
  1}_{\eta_2} v_2 \right),
\end{eqnarray*}
for any $\left( v_1, v_2 \right) \in T_{E, \eta_1} \oplus T_{E, \eta_2}$ such
that $d_{\eta_1} \pi_E \left( v_1 \right) = d_{\eta_2} \pi_E \left( v_2
\right) = 0$. We infer that for any sections $\sigma_j$ of $E$ such that
$\sigma_j \left( p \right) = \eta_j$, $j = 1, 2$, the equalities
\begin{eqnarray*}
  H_{\eta_1 + \eta_2} \left( \xi \right) & = & d_p  \left( \sigma_1 + \sigma_2
  \right) \left( \xi \right) - T_{\eta_1 + \eta_2} \nabla_{\xi}  \left(
  \sigma_1 + \sigma_2 \right)\\
  &  & \\
  & = & d_{\left( \eta_1, \eta_2 \right)} \left( s m_{_E} \right) \left( d_p
  \sigma_1 \left( \xi \right), d_p \sigma_2 \left( \xi \right)
  \right) - T_{\eta_1 + \eta_2} \nabla_{\xi} \sigma_1 - T_{\eta_1 + \eta_2}
  \nabla_{\xi} \sigma_2\\
  &  & \\
  & = & d_{\left( \eta_1, \eta_2 \right)} \left( s m_{_E} \right) \left( d_p
  \sigma_1 \left( \xi \right) - T_{\eta_1} \nabla_{\xi} \sigma_1, d_p \sigma_2
  \left( \xi \right) - T_{\eta_2} \nabla_{\xi} \sigma_2 \right)
  .
\end{eqnarray*}
hold.
We conclude that
\begin{equation}
  \label{aditivDistrib} H_{\eta_1 + \eta_2} \left( \xi \right) = d_{\left(
  \eta_1, \eta_2 \right)} \left( s m_{_E} \right) \left( H_{\eta_1} \left( \xi
  \right), H_{\eta_2} \left( \xi \right) \right) .
\end{equation}
\begin{lemma}
  \label{prdLm}For any section $\sigma \in C^1 \left( M, E \right)$ and for
  any function $u \in C^1 \left( M, \mathbbm{R} \right)$ the identity holds
  \begin{eqnarray*}
    d_p \left( u \sigma \right) & = & d_p u \otimes T_{u \sigma \left( p
    \right)} \sigma \left( p \right) \noplus \noplus + d_{\sigma \left( p
    \right)} [u \left( p \right) \mathbbm{I}_E] \cdot d_p \sigma\,,
  \end{eqnarray*}
  for any point $p \in M$.
\end{lemma}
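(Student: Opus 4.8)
The plan is to reduce the asserted tensorial identity to a one-variable computation along a curve, and then to split the resulting velocity into two partial contributions by a product rule, matching them termwise against the right-hand side. First I would fix $p \in M$ and $\xi \in T_{M, p}$ and choose a smooth curve $c$ in $M$ with $c \left( 0 \right) = p$ and $\dot{c} \left( 0 \right) = \xi$. Then by definition $d_p \left( u \sigma \right) \left( \xi \right) = \frac{d}{d t} _{\mid_{t = 0}} u \left( c \left( t \right) \right) \sigma \left( c \left( t \right) \right)$, a tangent vector at the point $u \left( p \right) \sigma \left( p \right) \in E$.

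To decouple the two factors I would introduce the two-parameter map $F \left( t, \tau \right) \assign u \left( c \left( t \right) \right) \sigma \left( c \left( \tau \right) \right)$. Since fiberwise scalar multiplication preserves fibers, $F \left( t, \tau \right)$ lies in $E_{c \left( \tau \right)}$ for all $t, \tau$, so $F$ is a genuine smooth $E$-valued map whose diagonal recovers $F \left( t, t \right) = \left( u \sigma \right) \left( c \left( t \right) \right)$. The standard chain rule for the diagonal then gives
\[
  \frac{d}{d t} _{\mid_{t = 0}} F \left( t, t \right) = \partial_t F \left( 0, 0 \right) + \partial_{\tau} F \left( 0, 0 \right),
\]
and the remaining work is to identify these two partials with the two summands in the statement.

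The $\tau$-partial is the easy one: $\partial_{\tau} F \left( 0, 0 \right) = \frac{d}{d \tau} _{\mid_{\tau = 0}} u \left( p \right) \sigma \left( c \left( \tau \right) \right)$ is the velocity of the curve $\tau \longmapsto [ u \left( p \right) \mathbbm{I}_E ] \left( \sigma \left( c \left( \tau \right) \right) \right)$, which by the ordinary chain rule equals $d_{\sigma \left( p \right)} [ u \left( p \right) \mathbbm{I}_E ] \cdot d_p \sigma \left( \xi \right)$, exactly the second term. The hard part will be the $t$-partial, where the definition (\ref{transISO}) of the translation isomorphism $T_{\eta}$ must be used with care. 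Here $\partial_t F \left( 0, 0 \right) = \frac{d}{d t} _{\mid_{t = 0}} u \left( c \left( t \right) \right) \sigma \left( p \right)$ is the velocity of a curve lying entirely inside the single fiber $E_p$ and passing through $u \left( p \right) \sigma \left( p \right)$; viewed as a curve in the vector space $E_p$ it has ordinary derivative $d_p u \left( \xi \right) \sigma \left( p \right) \in E_p$. By the very definition of $T_{u \left( p \right) \sigma \left( p \right)}$ as the translation identification of $E_p$ with $T_{E_p, u \left( p \right) \sigma \left( p \right)}$, this in-fiber velocity is $T_{u \left( p \right) \sigma \left( p \right)} \left( d_p u \left( \xi \right) \sigma \left( p \right) \right) = d_p u \left( \xi \right) T_{u \left( p \right) \sigma \left( p \right)} \sigma \left( p \right)$, which is the first term. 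Adding the two partials and recalling that $u \sigma \left( p \right) = u \left( p \right) \sigma \left( p \right)$ yields the claimed formula, and since $\xi$ was arbitrary this establishes the tensorial identity.
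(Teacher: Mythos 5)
Your proof is correct, and it takes a genuinely different route from the paper's. The paper argues inside a local trivialization: writing $\sigma = e \cdot f$, it splits the chart-level differential $\xi \oplus d_p \left( u f \right) \left( \xi \right)$ into $0 \oplus d_p u \left( \xi \right) f \left( p \right)$ plus $\xi \oplus u \left( p \right) d_p f \left( \xi \right)$, then converts the first piece into the vertical term via the identity (\ref{Ttrivial}) relating $T_{\sigma \left( p \right)}$ to the trivialization map $\theta_e$, and the second piece via the previously recorded identity (\ref{difProd}). You implement the very same decomposition — vary $u$ with $\sigma$ frozen, then freeze $u$ at $u \left( p \right)$ and vary $\sigma$ — but intrinsically, through the diagonal chain rule for the two-parameter map $F \left( t, \tau \right) = u \left( c \left( t \right) \right) \sigma \left( c \left( \tau \right) \right)$, which is legitimate since fiberwise scalar multiplication $\mathbbm{R} \times E \longrightarrow E$ is smooth and $u, \sigma$ are $C^1$. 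Your treatment of the $t$-partial correctly uses that the velocity of a curve inside the fixed fiber $E_p$ is the image under the translation isomorphism (\ref{transISO}) of its ordinary vector-space derivative, and your $\tau$-partial re-derives the content of (\ref{difProd}) inline as a plain chain rule for $\left[ u \left( p \right) \mathbbm{I}_E \right] \circ \sigma \circ c$. What your route buys is coordinate-freedom and geometric transparency (each summand is visibly an in-fiber velocity or a pushforward under the scaling map), at no cost in rigor; what the paper's route buys is economy within its own formalism, since (\ref{Ttrivial}) and (\ref{difProd}) are already set up for lemma \ref{addLm} and are reused verbatim, and the same trivialization computation is recycled later for the pull-back version in lemma \ref{GprdLm}.
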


\begin{proof}
  With the notation in the proof of lemma \ref{addLm}
  \begin{eqnarray*}
    d_p \left( u \sigma \right) \left( \xi \right) & = & d_{p, u f \left( p
    \right)} \theta_e  \left[ \xi \oplus d_p  \left( u f \right) \left( \xi
    \right)_{_{_{_{}}}} \right]\\
    &  & \\
    & = & d_{p, u f \left( p \right)} \theta_e \left\{ \xi \oplus \left[ d_p
    u \left( \xi \right) f \left( p \right) + u \left( p \right) d_p f \left(
    \xi \right)_{_{_{_{}}}} \right]_{_{_{}}} \right\}\\
    &  & \\
    & = & d_{p, u f \left( p \right)} \theta_e \left[ 0 \oplus d_p u \left(
    \xi \right) f \left( p \right)_{_{_{_{}}}} \right]\\
    &  & \\
    & + & d_{p, u f \left( p \right)} \theta_e \left[ \xi \oplus u \left( p
    \right) d_p f \left( \xi \right)_{_{_{_{}}}} \right]\\
    &  & \\
    & = & T_{u \sigma \left( p \right)} \theta_e \left( p, d_p u \left( \xi
    \right) f \left( p \right)_{_{_{_{}}}} \right) \noplus \noplus + d_p
    \left( u \left( p \right) \sigma \right) \left( \xi \right),
  \end{eqnarray*}
  thanks to (\ref{Ttrivial}). Using the identity
  \begin{equation}
    \label{difProd} d_p \left( \lambda \sigma \right) = d_{\sigma \left( p
    \right)} \left( \lambda \mathbbm{I}_E \right) \cdot d_p \sigma\,,
  \end{equation}
  for any $\lambda \in \mathbbm{R}$, we conclude
  \begin{eqnarray*}
    d_p \left( u \sigma \right) \left( \xi \right) & = & d_p u \left( \xi
    \right) T_{u \sigma \left( p \right)} \sigma \left( p \right) \noplus
    \noplus + d_{\sigma \left( p \right)} [u \left( p \right) \mathbbm{I}_E]
    \cdot d_p \sigma \left( \xi \right) .
  \end{eqnarray*}
\end{proof}

We observe also the elementary identity
\begin{equation}
  \label{transl} d_{\eta} \left( \lambda \mathbbm{I}_E \right) \cdot T_{\eta}
  = \lambda T_{\lambda \eta}\,,
\end{equation}
for all $\eta \in E$. We show now the identity
\begin{equation}
  \label{multHoriz} H_{\lambda \eta} = d_{\eta} \left( \lambda \mathbbm{I}_E
  \right) \cdot H_{\eta}\,,
\end{equation}
for all $\eta \in E$. Indeed let $\sigma$ be a section such that $\sigma
\left( p \right) = \eta$. Using (\ref{difProd}) and (\ref{transl}) we obtain
the equalities
\begin{eqnarray*}
  H_{\lambda \eta} & = & d_p \left( \lambda \sigma \right) - T_{\lambda \eta}
  \nabla \left( \lambda \sigma \right)\\
  &  & \\
  & = & d_{\sigma \left( p \right)} \left( \lambda \mathbbm{I}_E \right)
  \cdot d_p \sigma - \lambda T_{\lambda \eta} \nabla \sigma\\
  &  & \\
  & = & d_{\sigma \left( p \right)} \left( \lambda \mathbbm{I}_E \right)
  \cdot \left[ d_p \sigma - T_{\eta} \nabla \sigma \right]\\
  &  & \\
  & = & d_{\eta} \left( \lambda \mathbbm{I}_E \right) \cdot H_{\eta}\, .
\end{eqnarray*}
The property (\ref{multHoriz}) implies in particular $H_{0_p} = d_p 0_M$,
where $0_M$ is the zero section of $T_M$.

\begin{definition}
  A distribution $\mathcal{H} \subset T_E$, is called horizontal if the map
  $$
  d_{\eta} \pi_{E \mid \mathcal{H}_{\eta}} : \mathcal{H}_{\eta}
  \longrightarrow T_{M, \pi_E \left( \eta \right)}\,,
  $$ is an isomorphism for all
  $\eta \in E$.
\end{definition}

\begin{lemma}
  Any horizontal distribution $\mathcal{H} \subset T_E$, which satisfies the
  conditions $\left( \ref{aditivDistrib} \right)$ and $\left( \ref{multHoriz}
  \right)$ with $H_{\eta} \assign \left( d_{\eta} \pi_{E \mid
  \mathcal{H}_{\eta}} \right)^{- 1}$, determines a connection $\nabla$ over E
  with associated horizontal distribution $\mathcal{H}$.
\end{lemma}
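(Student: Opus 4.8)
The plan is to recover $\nabla$ by inverting the defining relation of Lemma~\ref{addLm}. Concretely, for a section $\sigma \in C^1 \left( M, E \right)$, a point $p$ and $\xi \in T_{M, p}$, set $\eta \assign \sigma \left( p \right)$ and define
\[ \nabla_{\xi} \sigma \assign T_{\eta}^{- 1} \left( d_p \sigma \left( \xi \right) - H_{\eta} \left( \xi \right) \right) . \]
Since $\pi_E \circ \sigma = \tmop{id}_M$ gives $d_{\eta} \pi_E \cdot d_p \sigma \left( \xi \right) = \xi$ and the horizontality of $\mathcal{H}$ gives $d_{\eta} \pi_E \cdot H_{\eta} \left( \xi \right) = \xi$, the difference lies in $\tmop{Ker} d_{\eta} \pi_E = T_{E_p, \eta}$, so $\nabla_{\xi} \sigma \in E_p$ is well defined. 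It depends only on the $1$-jet of $\sigma$ at $p$, it is $\mathbbm{R}$-linear and tensorial in $\xi$ (both $d_p \sigma$ and the inverse lift $H_{\eta}$ are linear in $\xi$), and it is continuous in $p$ since $\mathcal{H}$ is smooth. It then remains to check the two algebraic axioms of a covariant derivative and to identify the horizontal distribution of $\nabla$ with $\mathcal{H}$.

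First I would prove additivity in $\sigma$. Given $\sigma_1, \sigma_2$ with $\sigma_j \left( p \right) = \eta_j$, differentiating $\sigma_1 + \sigma_2 = s m_{_E} \circ \left( \sigma_1, \sigma_2 \right)$ yields $d_p \left( \sigma_1 + \sigma_2 \right) \left( \xi \right) = d_{\left( \eta_1, \eta_2 \right)} \left( s m_{_E} \right) \left( d_p \sigma_1 \left( \xi \right), d_p \sigma_2 \left( \xi \right) \right)$, while hypothesis $\left( \ref{aditivDistrib} \right)$ gives the same expression with the $d_p \sigma_j$ replaced by $H_{\eta_j}$. Subtracting and using the linearity of $d \left( s m_{_E} \right)$, the difference $d_p \left( \sigma_1 + \sigma_2 \right) \left( \xi \right) - H_{\eta_1 + \eta_2} \left( \xi \right)$ equals $d_{\left( \eta_1, \eta_2 \right)} \left( s m_{_E} \right)$ applied to the pair of \emph{vertical} vectors $d_p \sigma_j \left( \xi \right) - H_{\eta_j} \left( \xi \right) = T_{\eta_j} \nabla_{\xi} \sigma_j$. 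The explicit formula for $d \left( s m_{_E} \right)$ on vertical vectors recalled in the appendix then collapses this to $T_{\eta_1 + \eta_2} \left( \nabla_{\xi} \sigma_1 + \nabla_{\xi} \sigma_2 \right)$, and applying $T_{\eta_1 + \eta_2}^{- 1}$ gives $\nabla_{\xi} \left( \sigma_1 + \sigma_2 \right) = \nabla_{\xi} \sigma_1 + \nabla_{\xi} \sigma_2$. This is the step demanding the most care, since the three translation isomorphisms $T_{\eta_1}, T_{\eta_2}, T_{\eta_1 + \eta_2}$ sit over different fibre points and must be reconciled precisely through the vertical-sum formula; this is exactly where hypothesis $\left( \ref{aditivDistrib} \right)$ is indispensable.

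Next I would establish the Leibniz rule. For $u \in C^1 \left( M, \mathbbm{R} \right)$, Lemma~\ref{prdLm} expands $d_p \left( u \sigma \right) \left( \xi \right)$ as $d_p u \left( \xi \right) T_{u \sigma \left( p \right)} \sigma \left( p \right) + d_{\sigma \left( p \right)} \left[ u \left( p \right) \mathbbm{I}_E \right] \cdot d_p \sigma \left( \xi \right)$, while hypothesis $\left( \ref{multHoriz} \right)$ with $\lambda = u \left( p \right)$ gives $H_{u \left( p \right) \eta} \left( \xi \right) = d_{\eta} \left( u \left( p \right) \mathbbm{I}_E \right) \cdot H_{\eta} \left( \xi \right)$. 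Subtracting, the scaling map $d_{\eta} \left( u \left( p \right) \mathbbm{I}_E \right)$ factors out of the vertical difference $d_p \sigma \left( \xi \right) - H_{\eta} \left( \xi \right) = T_{\eta} \nabla_{\xi} \sigma$, and identity $\left( \ref{transl} \right)$ turns $d_{\eta} \left( u \left( p \right) \mathbbm{I}_E \right) \cdot T_{\eta}$ into $u \left( p \right) T_{u \left( p \right) \eta}$; applying $T_{u \left( p \right) \eta}^{- 1}$ yields $\nabla_{\xi} \left( u \sigma \right) = d_p u \left( \xi \right) \sigma \left( p \right) + u \left( p \right) \nabla_{\xi} \sigma$. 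Together with additivity and $\xi$-linearity this shows that $\nabla$ is a linear covariant derivative on $E$.

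Finally I would compare with Lemma~\ref{addLm}: by the very definition of $\nabla$ we have $T_{\eta} \nabla_{\xi} \sigma = d_p \sigma \left( \xi \right) - H_{\eta} \left( \xi \right)$, so the horizontal map attached to $\nabla$, namely $\xi \longmapsto d_p \sigma \left( \xi \right) - T_{\eta} \nabla_{\xi} \sigma$, is exactly $H_{\eta} \left( \xi \right)$. Hence the horizontal distribution of $\nabla$ coincides with $\mathcal{H}$, which completes the proof. The only genuinely delicate point is the additivity computation, where the differing base points of the translation isomorphisms are resolved by $\left( \ref{aditivDistrib} \right)$; the remaining verifications are formal consequences of Lemmas~\ref{addLm} and~\ref{prdLm} together with the scaling identities $\left( \ref{difProd} \right)$, $\left( \ref{transl} \right)$ and $\left( \ref{multHoriz} \right)$.
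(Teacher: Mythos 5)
Your proposal is correct and follows essentially the same route as the paper: you define $\nabla_{\xi}\sigma \assign T_{\sigma(p)}^{-1}\left[ d_p\sigma - H_{\sigma(p)}\right](\xi)$, check verticality for well-posedness, and verify the Leibniz rule via Lemma~\ref{prdLm}, identity~(\ref{transl}) and hypothesis~(\ref{multHoriz}), exactly as in the text. The only difference is that you spell out the additivity step through the vertical-sum formula for $d\left(sm_{_E}\right)$, which the paper dismisses as obvious, and you explicitly confirm that the horizontal map recovered from $\nabla$ is $H_{\eta}$ — both welcome additions, not deviations.
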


\begin{proof}
  The connection $\nabla$ is defined by the formula
  \begin{eqnarray*}
    \nabla_{\xi} \sigma & = & T_{\sigma \left( p \right)}^{- 1} \cdot \left[
    d_p \sigma_{_{_{_{}}}} - H_{\sigma \left( p \right)} \right] \left( \xi
    \right),
  \end{eqnarray*}
  for any $\xi \in T_{M, p}$. The definition is well posed because
  \begin{eqnarray*}
    \left[ d_p \sigma_{_{_{_{}}}} - H_{\sigma \left( p \right)} \right] \left(
    \xi \right) & \in & T_{E_p, \sigma \left( p \right)}\,,
  \end{eqnarray*}
  which follows from the identity
  \begin{eqnarray*}
    d_{\sigma \left( p \right)} \pi_E \cdot \left[ d_p \sigma_{_{_{_{}}}} -
    H_{\sigma \left( p \right)} \right] \left( \xi \right) & = & 0 \,.
  \end{eqnarray*}
  It is obvious that the additive property of $\nabla$ is equivalent to the
  condition $\left( \ref{aditivDistrib} \right)$. We observe now that with the
  previous definition, the covariant Leibniz property
  \begin{eqnarray*}
    \nabla_{\xi}  \left( u \sigma \right) & = & d_p u \left( \xi \right)
    \sigma \left( p \right) + u \left( p \right) \nabla_{\xi} \sigma\,,
  \end{eqnarray*}
  is equivalent to the identity
  \begin{eqnarray*}
    &&d_p \left( u \sigma \right) \left( \xi \right) - H_{u \sigma \left( p
    \right)} \left( \xi \right) 
    \\
    \\& = & T_{u \sigma \left( p \right)}  \left\{
    d_p u \left( \xi \right) \sigma \left( p \right)_{_{_{_{_{}}}}} + u \left(
    p \right) T_{\sigma \left( p \right)}^{- 1} \cdot \left[ d_p \sigma \left(
    \xi \right)_{_{_{_{}}}} - H_{\sigma \left( p \right)} \left( \xi
    \right)_{_{_{_{}}}} \right]  \right\} .
  \end{eqnarray*}
  We develop the right hand side using (\ref{transl}). We infer that the
  previous identity is equivalent to the following one
  \begin{eqnarray*}
    d_p \left( u \sigma \right) \left( \xi \right) - H_{u \sigma \left( p
    \right)} \left( \xi \right) & = & d_p u \left( \xi \right) T_{u \sigma
    \left( p \right)} \sigma \left( p \right)\\
    &  & \\
    & + & d_{\sigma \left( p \right)} \left[ u \left( p \right)_{_{_{_{}}}}
    \mathbbm{I}_E \right] \cdot \left[ d_p \sigma \left( \xi
    \right)_{_{_{_{}}}} - H_{\sigma \left( p \right)} \left( \xi
    \right)_{_{_{_{}}}} \right] .
  \end{eqnarray*}
  The later hold true thanks to lemma \ref{prdLm} and the assumption
  (\ref{multHoriz}).
\end{proof}

The data of a smooth horizontal distribution over $E$ coincides with the one
of section 
$$
H \in C^{\infty} \left( E \nocomma, \pi^{\ast}_E T^{\ast}_M
\otimes T_E \right)
$$ 
such that $d \pi_E \cdot H =\mathbbm{I}_{\pi^{\ast}_E
T_E}$. (We notice that $d \pi_E \in C^{\infty} \left( E \nocomma, T^{\ast}_E
\otimes \pi^{\ast}_E T_M \right)$). Such type of section determines a
connection if and only if it satisfies the identity $\left( \ref{multHoriz}
\right)$.

For any vector $\Xi \in T_{E, \eta}$ we denote by 
$$
\gamma_{\eta}^{\mathcal{H}}
\left( \Xi \right) \assign \Xi - H_{\eta} \circ d_{\eta} \pi_E \left( \Xi
\right)\,,
$$ 
its vertical component with respect to the horizontal distribution
$\mathcal{H}$. In particular
\begin{eqnarray*}
  \gamma_{\sigma \left( p \right)}^{\mathcal{H}} \cdot d_p \sigma \left( \xi
  \right) = T_{\sigma \left( p \right)} [\nabla_{\xi} \sigma \left( p
  \right)] \,.
\end{eqnarray*}

\subsection{The induced connection}

Let $\psi : N \longrightarrow M$ be a smooth map. We define the vector bundle
$\psi^{\ast} E \assign N \times_{\psi} E$ over $N$. In explicit terms
\begin{eqnarray*}
  \psi^{\ast} E & = & \left\{ \left( y, \eta \right) \in N \times E \mid \psi
  \left( y \right) = \pi_E \left( \eta \right)_{_{_{_{}}}} \right\},
\end{eqnarray*}
and the projection over $N$ is given by the restriction of the projection to
the first factor. We will denote by $\Psi : \psi^{\ast} E \longrightarrow E$
the restriction of the projection to the second factor. The sections of
$\psi^{\ast} E$ are identified with the maps $\sigma : N \longrightarrow E$
such that $\pi_E \circ \sigma = \psi$. In this way, if $s$ is a section of $E$
then the section $\psi^{\ast} s \assign s \circ \psi$ is a section of
$\psi^{\ast} E$. More in general if $\alpha$ is a section of $\Lambda^p
T^{\ast}_M \otimes E$, we define the section $\psi^{\ast} \alpha \in \Lambda^p
T^{\ast}_N \otimes \psi^{\ast} E$ as
\begin{eqnarray*}
  \left( \psi^{\ast} \alpha \right) \left( y \right) & \assign & \left( \alpha
  \circ \psi \right) \left( y \right) \cdot \Lambda^p \left( d_y \psi \right)
  .
\end{eqnarray*}
We provide a generalization of lemma (\ref{prdLm}).

\begin{lemma}
  \label{GprdLm}For any section $\sigma \in C^1 \left( N, \psi^{\ast} E
  \right)$ and for any function $u \in C^1 \left( N, \mathbbm{R} \right)$ the
  identity holds
  \begin{eqnarray*}
    d_p \left( u \sigma \right) & = & d_p u \otimes T_{u \sigma \left( p
    \right)} \sigma \left( p \right) \noplus \noplus + d_{\sigma \left( p
    \right)} [u \left( p \right) \mathbbm{I}_E] \cdot d_p \sigma\,,
  \end{eqnarray*}
  for any point $p \in N$.
\end{lemma}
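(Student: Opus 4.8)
The plan is to adapt the proof of Lemma \ref{prdLm} with essentially no change, the point being that the asserted formula merely records the differential of fiberwise scalar multiplication on the total space $E$, a datum intrinsic to $E$ that is insensitive to whether the section is defined over $M$ or over $N$. First I would observe that $u\sigma$ is genuinely a section of $\psi^{\ast}E$: since scalar multiplication preserves fibers, $\pi_E(u(y)\sigma(y)) = \pi_E(\sigma(y)) = \psi(y)$, so $u\sigma : N \longrightarrow E$ again covers $\psi$ and $d_p(u\sigma)$ is a well defined map $T_{N, p} \longrightarrow T_{E, u(p)\sigma(p)}$.

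The key step is to factor $u\sigma$ through the scalar multiplication map $m : \mathbbm{R} \times E \longrightarrow E$, $m(\lambda, \eta) \assign \lambda\eta$, as $u\sigma = m \circ (u, \sigma)$, and apply the chain rule. Fixing $\xi \in T_{N, p}$ and differentiating along a curve $t \longmapsto y_t$ with $y_0 = p$ and $\dot y_0 = \xi$, the chain rule splits $d_p(u\sigma)(\xi)$ into the two partial differentials of $m$ at $(u(p), \sigma(p))$ applied to $d_p u(\xi)$ and to $d_p\sigma(\xi)$ respectively. The partial differential in the $E$-direction, with the scalar frozen at $u(p)$, is the differential of $\eta \longmapsto u(p)\eta = (u(p)\mathbbm{I}_E)(\eta)$, hence equals $d_{\sigma(p)}[u(p)\mathbbm{I}_E] \cdot d_p\sigma(\xi)$ by the very definition of $\lambda\mathbbm{I}_E$. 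The partial differential in the scalar direction, with $\eta$ frozen at $\sigma(p)$, is the velocity at $\lambda = u(p)$ of the curve $\lambda \longmapsto \lambda\sigma(p)$ inside the fiber $E_{\psi(p)}$; this curve is linear in $\lambda$ along the vector-space fiber, so its velocity is the fiber vector $d_p u(\xi)\cdot\sigma(p)$, which under the canonical translation $T$ reads $d_p u(\xi)\, T_{u(p)\sigma(p)}\sigma(p)$. Adding the two contributions yields exactly the claimed identity.

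I do not expect a genuine obstacle here. The only point requiring care is that the two partial differentials of $m$ must be identified with the operators $T$ and $d(\cdot\, \mathbbm{I}_E)$ exactly as they were used in Lemma \ref{prdLm}; concretely one checks this in a local trivialization $\theta_e : U \times \mathbbm{R}^r \longrightarrow E_{\mid U}$ over an open $U \subset M$, writing $\sigma = \theta_e \circ (\psi, f)$ with $f \in C^1(\psi^{-1}(U), \mathbbm{R}^r)$ and $u\sigma = \theta_e \circ (\psi, uf)$. Then the scalar Leibniz rule $d_p(uf) = d_p u \otimes f(p) + u(p)\, d_p f$ separates $d_p(u\sigma)$ precisely into the vertical term, handled by the chart identity (\ref{Ttrivial}), and the remaining term, handled by (\ref{difProd}) together with (\ref{transl}). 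This is the verbatim computation of Lemma \ref{prdLm} with $M$ replaced by $N$ in the domain of $\sigma$, which confirms that the generalization costs nothing.
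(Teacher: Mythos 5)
Your proposal is correct, and it in fact contains two arguments layered on top of each other. Your headline route --- factoring $u \sigma = m \circ \left( u, \sigma \right)$ through the fiberwise scalar multiplication $m : \mathbbm{R} \times E \longrightarrow E$ and splitting $d_{\left( u \left( p \right), \sigma \left( p \right) \right)} m$ into its two partial differentials, identified respectively with $d_{\sigma \left( p \right)} [ u \left( p \right) \mathbbm{I}_E ]$ and with $a \longmapsto a\, T_{u \left( p \right) \sigma \left( p \right)} \sigma \left( p \right)$ --- is more intrinsic than what the paper does, and it makes visible a point the paper leaves implicit: the covering condition $\pi_E \circ \sigma = \psi$ plays no role, so the identity holds for any $C^1$ map $\sigma : N \longrightarrow E$, and Lemma \ref{prdLm} and Lemma \ref{GprdLm} are literally the same statement about $d m$. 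The paper instead proves the lemma by redoing the trivialization computation: it writes $\sigma = \psi^{\ast} e \cdot f$, differentiates through $\theta_e$ at $\left( \psi \left( p \right), u \left( p \right) f \left( p \right) \right)$, applies the scalar Leibniz rule to $u f$, and concludes via (\ref{Ttrivial}) and $d_p \left( \lambda \sigma \right) = d_{\sigma \left( p \right)} \left( \lambda \mathbbm{I}_E \right) \cdot d_p \sigma$ --- which is exactly the fallback verification in your last paragraph, with $d_p f \left( \xi \right)$ replaced by $d_p \psi \left( \xi \right) \oplus d_p f \left( \xi \right)$ in the first slot. So in substance your verification coincides with the paper's proof, while your chain-rule packaging buys a cleaner and slightly more general statement at no extra cost; there is no gap.
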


\begin{proof}
  A local frame $e$ of $E$ induces a local frame $\psi^{\ast} e$ of
  $\psi^{\ast} E$ over the open set $\psi^{- 1} \left( U \right)$. Then
  $\sigma = \psi^{\ast} e \cdot f$ with $f \in C^1 \left( \psi^{- 1} \left( U
  \right), \mathbbm{R}^r \right)$. We denote by $\theta_e : U \times
  \mathbbm{R}^r \longrightarrow E_{\mid U}$ the trivialization map induced by
  the local frame $e$ of $E$. Then the differential of this map at the point
  $\left( \psi \left( p \right), f \left( p \right) \right)$ provides an
  isomorphism
  \begin{eqnarray*}
    d_{\psi \left( p \right), f \left( p \right)} \theta_e : T_{U, \psi \left(
    p \right)} \oplus \mathbbm{R}^r & \longrightarrow & T_{E, \sigma \left( p
    \right)},
  \end{eqnarray*}
  and
  \begin{eqnarray*}
    d_p \sigma \left( \xi \right) & = & d_{\psi \left( p \right), u f \left( p
    \right)} \theta_e  \left[ d_p \psi \left( \xi \right) \oplus d_p f \left(
    \xi \right)_{_{_{_{}}}} \right] .
  \end{eqnarray*}
  for any $\xi \in T_{N, p}$ we have
  \begin{eqnarray*}
    d_p \left( u \sigma \right) \left( \xi \right) & = & d_{\psi \left( p
    \right), u f \left( p \right)} \theta_e  \left[ d_p \psi \left( \xi
    \right) \oplus d_p  \left( u f \right) \left( \xi \right)_{_{_{_{}}}}
    \right]\\
    &  & \\
    & = & d_{\psi \left( p \right), u f \left( p \right)} \theta_e \left\{
    d_p \psi \left( \xi \right) \oplus \left[ d_p u \left( \xi \right) f
    \left( p \right) + u \left( p \right) d_p f \left( \xi \right)_{_{_{_{}}}}
    \right]_{_{_{}}} \right\}\\
    &  & \\
    & = & d_{\psi \left( p \right), u f \left( p \right)} \theta_e \left[ 0
    \oplus d_p u \left( \xi \right) f \left( p \right)_{_{_{_{}}}} \right]\\
    &  & \\
    & + & d_{\psi \left( p \right), u f \left( p \right)} \theta_e \left[ d_p
    \psi \left( \xi \right) \oplus u \left( p \right) d_p f \left( \xi
    \right)_{_{_{_{}}}} \right]\\
    &  & \\
    & = & T_{u \sigma \left( p \right)} \cdot \theta_e  \left( \psi \left(
    \xi \right), d_p u \left( \xi \right) f \left( p \right)_{_{_{_{_{}}}}}
    \right) \noplus \noplus + d_p \left( u \left( p \right) \sigma \right)
    \left( \xi \right),
  \end{eqnarray*}
  thanks to (\ref{Ttrivial}). Using the equality
  \[ d_p \left( \lambda \sigma \right) = d_{\sigma \left( p \right)} \left(
     \lambda \mathbbm{I}_E \right) \cdot d_p \sigma, \]
  for any $\lambda \in \mathbbm{R}$, we conclude the required identity
  \begin{eqnarray*}
    d_p \left( u \sigma \right) \left( \xi \right) & = & d_p u \left( \xi
    \right) T_{u \sigma \left( p \right)} \sigma \left( p \right) \noplus
    \noplus + d_{\sigma \left( p \right)} [u \left( p \right) \mathbbm{I}_E]
    \cdot d_p \sigma \left( \xi \right) .
  \end{eqnarray*}
\end{proof}

The induced connection $\nabla^{\psi}$ over $\psi^{\ast} E$ is defined by the
formula
\begin{eqnarray*}
  \nabla_{\xi}^{\psi} \sigma & \assign & T^{- 1}_{\sigma \left( p \right)}
  \gamma_{\sigma \left( p \right)}^{\mathcal{H}} d_p \sigma \left( \xi
  \right)\\
  &  & \\
  & = & T^{- 1}_{\sigma \left( p \right)}  \left[ d_p \sigma \left( \xi
  \right) - H_{\sigma \left( p \right)} d_p \psi \left( \xi
  \right)_{_{_{_{}}}} \right],
\end{eqnarray*}
for any $\xi \in T_{N, p}$. It is obvious that the additive property of
$\nabla^{\psi}$ follows from the condition $\left( \ref{aditivDistrib}
\right)$. We show now that $\nabla^{\psi}$ satisfies the Leibniz property
\begin{eqnarray*}
  \nabla^{\psi}_{\xi}  \left( u \sigma \right) & = & d_p u \left( \xi \right)
  \sigma \left( p \right) + u \left( p \right) \nabla^{\psi}_{\xi} \sigma \,.
\end{eqnarray*}
Indeed using lemma \ref{GprdLm} and the identity (\ref{multHoriz}) we have
\begin{eqnarray*}
  \nabla^{\psi}_{\xi}  \left( u \sigma \right) & = & T^{- 1}_{u \sigma \left(
  p \right)} \gamma_{u \sigma \left( p \right)}^{\mathcal{H}} d_p  \left( u
  \sigma \right) \left( \xi \right)\\
  &  & \\
  & = & T^{- 1}_{u \sigma \left( p \right)} \gamma_{u \sigma \left( p
  \right)}^{\mathcal{H}} \left[ d_p u \left( \xi \right)_{_{_{_{}}}} T^{}_{u
  \sigma \left( p \right)} \sigma \left( p \right) + d_{\sigma \left( p
  \right)} [u \left( p \right) \mathbbm{I}_E] d_p \sigma \left( \xi
  \right)_{_{_{_{_{}}}}} \right]\\
  &  & \\
  & = & d_p u \left( \xi \right) \sigma \left( p \right) + T^{- 1}_{u \sigma
  \left( p \right)} \left[ d_{\sigma \left( p \right)} [u \left( p \right)
  \mathbbm{I}_E] d_p \sigma \left( \xi \right)_{_{_{_{_{}}}}} - H_{u \sigma
  \left( p \right)} d_p \psi \left( \xi \right)_{_{_{_{}}}} \right]\\
  &  & \\
  & = & d_p u \left( \xi \right) \sigma \left( p \right)\\
  &  & \\
  & + & T^{- 1}_{u \sigma \left( p \right)} \left[ d_{\sigma \left( p
  \right)} [u \left( p \right) \mathbbm{I}_E] d_p \sigma \left( \xi
  \right)_{_{_{_{_{}}}}} - d_{\sigma \left( p \right)} [u \left( p \right)
  \mathbbm{I}_E] H_{\sigma \left( p \right)} d_p \psi \left( \xi
  \right)_{_{_{_{}}}} \right]
  \end{eqnarray*}
  \begin{eqnarray*}
  & = & d_p u \left( \xi \right) \sigma \left( p \right) + T^{- 1}_{u \sigma
  \left( p \right)} \left[ d_{\sigma \left( p \right)} [u \left( p \right)
  \mathbbm{I}_E] \gamma_{\sigma \left( p \right)}^{\mathcal{H}} d_p \sigma
  \left( \xi \right)_{_{_{_{_{}}}}} \right] \\
  &  & \\
  & = & d_p u \left( \xi \right) \sigma \left( p \right) + T^{- 1}_{u \sigma
  \left( p \right)} \left[ u \left( p \right) \gamma_{\sigma \left( p
  \right)}^{\mathcal{H}} d_p \sigma \left( \xi \right)_{_{_{_{}}}} \right] \\
  &  & \\
  & = & d_p u \left( \xi \right) \sigma \left( p \right) + u \left( p \right)
  \nabla^{\psi}_{\xi} \sigma \,.
\end{eqnarray*}
We observe also that for any $s \in C^{\infty} \left( M, E \right)$ and $\xi
\in T_{N, p}$ we have the equalities
\begin{eqnarray*}
  \nabla_{\xi}^{\psi}  (\psi^{\ast} s) & = & T^{- 1}_{s \circ \psi \left( p
  \right)} \gamma_{s \circ \psi \left( p \right)}^{\mathcal{H}} d_{\psi \left(
  p \right)} s \cdot d_p \psi \left( \xi \right)\\
  &  & \\
  & = & \nabla s \left( \psi \left( p \right) \right) \cdot d_p \psi \left(
  \xi \right),
\end{eqnarray*}
in other terms the functorial formula
\begin{equation}
  \label{functConn} \nabla^{\psi}  (\psi^{\ast} s) = \psi^{\ast} \left( \nabla
  s \right),
\end{equation}
holds.

\subsubsection{The induced connection (second approach)}

We observe that the tangent space of $\psi^{\ast} E$ at the point $\left( y,
\eta \right)$ is given by the equality
\begin{eqnarray*}
  T_{\psi^{\ast} E, \left( y, \eta \right)} & = & \left\{ \left( \xi, \theta
  \right) \in T_{N, y} \oplus T_{E, \eta} \mid d_y \psi \left( \xi \right) =
  d_{\eta} \pi_E \left( \theta \right)_{_{_{_{}}}} \right\} .
\end{eqnarray*}
Given any horizontal distribution $H \in C^{\infty} \left( E \nocomma,
\pi^{\ast}_E T^{\ast}_M \otimes T_E \right)$ over $E$, we define the
horizontal distribution
\begin{eqnarray*}
  H^{\psi} \assign \Psi^{\ast} H & \in & C^{\infty} \left( \psi^{\ast} E
  \nocomma, \pi^{\ast}_{\psi^{\ast} E} T^{\ast}_N \otimes T_{\psi^{\ast} E}
  \right) .
\end{eqnarray*}
In explicit terms
\begin{eqnarray*}
  H^{\psi}_{\left( y, \eta \right)} & = & \mathbbm{I}_{T_{N, y}} \oplus
  H_{\eta} \cdot d_y \psi .
\end{eqnarray*}
If $H$ satisfies the identities $\left( \ref{aditivDistrib} \right)$ and
$\left( \ref{multHoriz} \right)$ then so does $H^{\psi}$. This follows indeed
from the identities
\begin{eqnarray*}
  d_{\left( y, \eta_1, \eta_2 \right)} \left( s m_{_{\psi^{\ast} E}} \right) &
  = & \mathbbm{I}_{T_{N, y}} \oplus d_{\left( \eta_1, \eta_2 \right)} \left( s
  m_{_E} \right),\\
  &  & \\
  d_{\left( y, \eta \right)} (\lambda \mathbbm{I}_{\psi^{\ast} E}) & = &
  \mathbbm{I}_{T_{N, y}} \oplus d_{\eta} \left( \lambda \mathbbm{I}_E \right)
  .
\end{eqnarray*}
By definition of \ $H^{\psi}$ we infer that the induced connection
$\nabla^{\psi}$ over $\psi^{\ast} E$ satisfies the formula
\begin{eqnarray*}
  \nabla_{\xi}^{\psi} \sigma & = & T^{- 1}_{\sigma \left( y \right)} \cdot
  \left[ d_y \sigma \left( \xi \right) - H_{\sigma \left( y \right)} \cdot d_y
  \psi \left( \xi \right)_{_{_{_{}}}} \right],
\end{eqnarray*}
for any $\xi \in T_{N, y}$.

The local frame $e$ induces a local frame $\eta \assign e \circ \psi$ of
$\psi^{\ast} E$ over $\psi^{- 1} \left( U \right)$. We compute the local
connection $A^{\psi}$ form of $\nabla^{\psi}$ with respect to such frame. We
notice that $\nabla^{\psi} \eta = \psi^{\ast} \left( e \cdot A \right) = \eta
\cdot \psi^{\ast} A$ by the previous remark. We infer the equality $A^{\psi} =
\psi^{\ast} A$.

\subsubsection{Parallel transport}

We consider a smooth curve $\gamma : \left( - \varepsilon, \varepsilon \right)
\longrightarrow M$ and a section $\sigma \in C^1 \left( \left( - \varepsilon,
\varepsilon \right), \gamma^{\ast} E \right)$ which satisfies the equation
\begin{eqnarray*}
  \nabla_{\frac{d}{d t}}^{\gamma} \sigma & = & 0,
\end{eqnarray*}
over $\left( - \varepsilon, \varepsilon \right)$ with $\sigma \left( 0 \right)
= \eta \in E_{\gamma \left( 0 \right)}$. If we write $\sigma \left( t \right)
= e \left( \gamma \left( t \right) \right) \cdot f \left( t \right)$ then
\begin{eqnarray*}
  \nabla_{\frac{d}{d t}}^{\gamma} \sigma & = & e \left( \gamma \left( t
  \right) \right) \cdot \left[ \dot{f} \left( t \right) + A \left(
  \dot{\gamma} \left( t \right) \right) \cdot f \left( t \right) \right] .
\end{eqnarray*}
We infer that the parallel transport map $\tau_{\gamma, t} : E_{\gamma \left( 0
\right)} \longrightarrow E_{\gamma \left( t \right)}$, $t \in \left( -
\varepsilon, \varepsilon \right)$ given by $\tau_{\gamma, t} \left( \eta
\right) = \sigma \left( t \right)$, is linear. We show the following fact.

\begin{lemma}
  For any smooth curve $\gamma : \left( - \varepsilon, \varepsilon \right)
  \longrightarrow M$ and for any section $\sigma \in C^1 \left( \left( -
  \varepsilon, \varepsilon \right), \gamma^{\ast} E \right)$, holds the identity
  \begin{equation}
    \label{covpar} \nabla_{\frac{d}{d t}}^{\gamma} \sigma \left( 0 \right) =
    \frac{d}{d t} _{\mid_{t = 0}}  \left[ \tau^{- 1}_{\gamma, t} \cdot \sigma
    \left( t \right)_{_{_{_{}}}} \right] .
  \end{equation}
\end{lemma}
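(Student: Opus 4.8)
The plan is to reduce the identity to an elementary computation in a \emph{parallel frame} along $\gamma$, using the linear ODE characterization of parallel transport already recorded just before the statement. First I would fix a basis $\left( \varepsilon_k \left( 0 \right) \right)_{k = 1}^r$ of the fiber $E_{\gamma \left( 0 \right)}$ and propagate it by parallel transport, setting $\varepsilon_k \left( t \right) \assign \tau_{\gamma, t} \left( \varepsilon_k \left( 0 \right) \right)$. By the very definition of the parallel transport map, each $\varepsilon_k$ is a section of $\gamma^{\ast} E$ satisfying $\nabla^{\gamma}_{\frac{d}{d t}} \varepsilon_k \equiv 0$, and since $\tau_{\gamma, t}$ is linear the family $\left( \varepsilon_k \left( t \right) \right)_k$ is a frame of $E_{\gamma \left( t \right)}$ for every $t$.

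Next I would expand the given section in this moving frame as $\sigma \left( t \right) = \sum_k g^k \left( t \right) \varepsilon_k \left( t \right)$ with $g^k \in C^1$. The Leibniz property of $\nabla^{\gamma}$ together with $\nabla^{\gamma}_{\frac{d}{d t}} \varepsilon_k = 0$ immediately gives $\nabla^{\gamma}_{\frac{d}{d t}} \sigma = \sum_k \dot{g}^k \varepsilon_k$, so that $\nabla^{\gamma}_{\frac{d}{d t}} \sigma \left( 0 \right) = \sum_k \dot{g}^k \left( 0 \right) \varepsilon_k \left( 0 \right)$. For the right hand side I would use that parallel transport carries the frame back to itself, i.e. $\tau^{- 1}_{\gamma, t} \varepsilon_k \left( t \right) = \varepsilon_k \left( 0 \right)$; by linearity of $\tau^{- 1}_{\gamma, t}$ this yields $\tau^{- 1}_{\gamma, t} \cdot \sigma \left( t \right) = \sum_k g^k \left( t \right) \varepsilon_k \left( 0 \right)$, a curve in the \emph{fixed} vector space $E_{\gamma \left( 0 \right)}$, whose derivative at $t = 0$ equals $\sum_k \dot{g}^k \left( 0 \right) \varepsilon_k \left( 0 \right)$. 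Comparing the two expressions gives the required identity $\left( \ref{covpar} \right)$.

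There is no genuine obstacle here; the only points requiring care are the linearity of $\tau_{\gamma, t}$ (already established via the linear system $\dot{f} + A \left( \dot{\gamma} \right) f = 0$ satisfied by the coordinates of a parallel section) and the existence of the parallel frame, which follows from solving this same system with the $r$ initial conditions $\varepsilon_k \left( 0 \right)$. The mild subtlety is purely bookkeeping: one must propagate the whole basis simultaneously so that the transition between the fiber frames is exactly $\tau_{\gamma, t}$, which is what makes the $t$-dependence of the frame cancel against that of $\tau^{- 1}_{\gamma, t}$.

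As an alternative I would note that one can bypass the parallel frame entirely and argue in the fixed local frame $e$ used in the preceding proof. Writing $\tau_{\gamma, t}$ as multiplication by the fundamental matrix $P \left( t \right)$ of $\dot{P} + A \left( \dot{\gamma} \right) P = 0$ with $P \left( 0 \right) = \mathbbm{I}$, and $\sigma \left( t \right) = e \left( \gamma \left( t \right) \right) \cdot f \left( t \right)$, one has $\tau^{- 1}_{\gamma, t} \cdot \sigma \left( t \right) = e \left( \gamma \left( 0 \right) \right) \cdot P^{- 1} \left( t \right) f \left( t \right)$; differentiating and using $\dot{P} \left( 0 \right) = - A \left( \dot{\gamma} \left( 0 \right) \right)$ gives $\frac{d}{d t}_{\mid_{t = 0}} \left[ P^{- 1} f \right] = \dot{f} \left( 0 \right) + A \left( \dot{\gamma} \left( 0 \right) \right) f \left( 0 \right)$, which is precisely the local expression of $\nabla^{\gamma}_{\frac{d}{d t}} \sigma \left( 0 \right)$ computed immediately before the statement. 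Either route closes the proof.
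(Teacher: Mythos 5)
Your proposal is correct, and your primary argument takes a genuinely different (though standard) route from the paper's, while your alternative essentially reproduces the paper's proof. The paper works entirely in the fixed local frame $e$: writing $\sigma \left( t \right) = e \left( \gamma \left( t \right) \right) \cdot f \left( t \right)$, it characterizes $\tau^{- 1}_{\gamma, t} \cdot \sigma \left( t \right) = e \left( \gamma \left( 0 \right) \right) \cdot u_t \left( 0 \right)$ through the backward solutions $u_t$ of $\dot{u}_t + A \left( \dot{\gamma} \right) u_t = 0$ with terminal condition $u_t \left( t \right) = f \left( t \right)$, integrates this ODE to get $f \left( t \right) - u_t \left( 0 \right) = - \int_0^t A \left( \dot{\gamma} \left( s \right) \right) u_t \left( s \right) d s$, and differentiates in $t$ at $t = 0$. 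In your language, $u_t \left( s \right) = P \left( s \right) P^{- 1} \left( t \right) f \left( t \right)$, so $u_t \left( 0 \right) = P^{- 1} \left( t \right) f \left( t \right)$, and your fundamental-matrix computation with $\dot{P} \left( 0 \right) = - A \left( \dot{\gamma} \left( 0 \right) \right)$ is exactly the paper's argument modulo phrasing; if anything your version is slightly cleaner, since differentiating the paper's integral equation at general $t$ also produces a term $\int_0^t A \left( \dot{\gamma} \left( s \right) \right) \partial_t u_t \left( s \right) d s$ which the displayed formula omits and which vanishes only at $t = 0$, where the paper evaluates. Your primary route via the parallel frame $\varepsilon_k \left( t \right) = \tau_{\gamma, t} \varepsilon_k \left( 0 \right)$ is frame-intrinsic: it trades the ODE manipulation for the Leibniz rule of the induced connection (proved in the paper's appendix), making the identity transparent since both sides become $\sum_k \dot{g}^k \left( 0 \right) \varepsilon_k \left( 0 \right)$, at the mild bookkeeping cost you correctly flag — building the frame by solving the linear system with the $r$ initial conditions, noting its invertibility and smoothness in $t$ so that $g^k \in C^1$, and propagating the whole basis at once so the frame transition is exactly $\tau_{\gamma, t}$. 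Both routes are complete proofs of $\left( \ref{covpar} \right)$.
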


\begin{proof}
  We notice first that the term $\tau^{- 1}_{\gamma, t} \cdot \sigma \left( t
  \right)$ is given by the intrinsic identities
  \begin{eqnarray*}
    \frac{d u_t}{d s} + A \left( \dot{\gamma} \left( s \right) \right) \cdot
    u_t \left( s \right) & = & 0,\\
    &  & \\
    u_t \left( t \right) & = & f \left( t \right),\\
    &  & \\
    e \left( \gamma \left( 0 \right) \right) \cdot u_t \left( 0 \right) & = &
    \tau^{- 1}_{\gamma, t} \cdot \sigma \left( t \right) .
  \end{eqnarray*}
  Integrating the first equation we infer
  \begin{eqnarray*}
    u_t \left( t \right) - u_t \left( 0 \right) & = & - \int_0^t A \left(
    \dot{\gamma} \left( s \right) \right) \cdot u_t \left( s \right) d s .
  \end{eqnarray*}
  Using the second equation we obtain
  \begin{eqnarray*}
    f \left( t \right) - u_t \left( 0 \right) & = & - \int_0^t A \left(
    \dot{\gamma} \left( s \right) \right) \cdot u_t \left( s \right) d s .
  \end{eqnarray*}
  Deriving with respect to the variable $t$ we obtain
  \begin{eqnarray*}
    \frac{d}{d t} u_t \left( 0 \right) & = & \dot{f} \left( t \right) + A
    \left( \dot{\gamma} \left( t \right) \right) \cdot u_t \left( t \right)\\
    &  & \\
    & = & \dot{f} \left( t \right) + A \left( \dot{\gamma} \left( t \right)
    \right) \cdot f \left( t \right) .
  \end{eqnarray*}
  Evaluating at $t = 0$ and multiplying both sides with $e \left( \gamma
  \left( 0 \right) \right)$ we infer the required conclusion.
\end{proof}

We consider now a $C^1$-vector field $\xi$ over $M$ and let $\varphi_{\xi, t}$
be the associated $1$-parameter subgroup of transformations of $M$. Let
$\Phi_{\xi, t} : E \longrightarrow E$ be the parallel transport map along the
flow lines of $\varphi_{\xi, t}$. It is obvious by definition, that the map
$\Phi_{\xi, t}$ satisfies $\pi_E \circ \Phi_{\xi, t} = \varphi_{\xi, t} \circ
\pi_E$.

The vector field $\Xi \assign \dot{\Phi}_{\xi, 0}$ over $E$ satisfies the
equality $\Xi \left( \eta \right) = H_{\eta} \left( \xi \right)$, for any
$\eta \in E$. This is a direct consequence of the definition of the induced
connection along the flow lines of $\xi$.

To any section $\sigma \in C^1 \left( M, E \right)$ we can associate a
$C^1$-vector field $\Sigma$ over $E$ defined as $\Sigma \left( \eta \right)
\assign T_{\eta} [\sigma \circ \pi_E \left( \eta \right)]$. Let $\Phi_{\Sigma,
t}$ be the associated $1$-parameter subgroup of transformations of $E$. In
explicit terms it satisfies $$
\Phi_{\Sigma, t} \left( \eta \right) = \eta + t
\sigma \circ \pi_E \left( \eta \right)\,.
$$
Then
\begin{eqnarray*}
  {}[\Xi, \Sigma] & = & \frac{d}{d t} _{\mid_{t = 0}}  \frac{d}{d s} _{\mid_{s
  = 0}}  \left( \Phi_{\xi, - t} \circ \Phi_{\Sigma, s} \circ \Phi_{\xi, t}
  \right) .
\end{eqnarray*}
The fact that the map $\Phi_{\xi, - t}$ is linear on the fibers implies
\begin{eqnarray*}
  \Phi_{\xi, - t} \circ \Phi_{\Sigma, s} \circ \Phi_{\xi, t} & = & \Phi_{\xi,
  - t} \left[ \Phi_{\xi, t} + s \sigma \circ \pi_E \circ \Phi_{\xi, t}
  \right]\\
  &  & \\
  & = & \mathbbm{I}_E \noplus + s \Phi_{\xi, - t} \cdot \sigma \circ \pi_E
  \circ \Phi_{\xi, t}\\
  &  & \\
  & = & \mathbbm{I}_E \noplus + s \Phi_{\xi, - t} \cdot \sigma \circ
  \varphi_{\xi, t} \circ \pi_E \,.
\end{eqnarray*}
Thus for any $\eta \in E_p$ holds
\begin{eqnarray*}
  \Phi_{\xi, - t} \circ \Phi_{\Sigma, s} \circ \Phi_{\xi, t} \left( \eta
  \right) & = & \eta \noplus + s \Phi_{\xi, - t} \cdot \sigma \circ
  \varphi_{\xi, t} \left( p \right) \in E_p \,.
\end{eqnarray*}
We conclude
\begin{eqnarray*}
  {}[\Xi, \Sigma] \left( \eta \right) & = & \frac{d}{d t} _{\mid_{t = 0}}
  T_{\eta} \left[ \Phi_{\xi, - t} \cdot \sigma \circ \varphi_{\xi, t} \left( p
  \right)_{_{_{_{}}}} \right]\\
  &  & \\
  & = & T_{\eta} \left[ \nabla_{\xi} \sigma \left( p \right) \right],
\end{eqnarray*}
i.e for any $\eta \in E$ the equality holds
\begin{equation}
  \label{BraketConn}  [\Xi, \Sigma] \left( \eta \right) = T_{\eta} \left[
  (\nabla_{\xi} \sigma) \circ \pi_E \left( \eta \right)_{_{_{_{}}}} \right] .
\end{equation}
Iterating twice we deduce the identity
\begin{equation}
  \label{Braket2Cov}  \left[ \Xi_1, [\Xi_2, \Sigma]_{_{_{_{}}}} \right] \left(
  \eta \right) = T_{\eta} \left[ (\nabla_{\xi_1} \nabla_{\xi_2} \sigma) \circ
  \pi_E \left( \eta \right)_{_{_{_{}}}} \right] .
\end{equation}
Moreover the fact that by (\ref{BraketConn}) the vector fields $[\Xi_j,
\Sigma]$, $j = 1, 2$ are tangent to the fibers of $E$ and constant along them
implies
\begin{equation}
  \label{vanishComm}  \left[ [\Xi_1, \Sigma], [\Xi_2, \Sigma]_{_{_{_{}}}}
  \right] = 0 \,.
\end{equation}

\subsection{The geometric meaning of the curvature tensor}

\begin{lemma}
  \label{TorsCurv}Let $R \assign \nabla^2$ be the curvature tensor of the
  connection $\nabla$. Then for any vector fields $\xi_1, \xi_2$ over $M$ and
  for any $\eta \in E$ the identity holds
  \begin{eqnarray*}
    \gamma_{\eta}^{\nabla} \left( \left[ \Xi_1, \Xi_2 \right] \left( \eta
    \right)_{_{_{_{}}}} \right) = T_{\eta} [R \left( \xi_2, \xi_1 \right)
    \eta] \,.
  \end{eqnarray*}
\end{lemma}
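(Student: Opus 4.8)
The plan is to reduce everything to a local frame and to compute the Lie bracket of the two horizontal lifts explicitly. Fix $\eta \in E_p$ and a local frame $e = \left( e_k \right)_{k = 1}^r$ of $E$ over a neighborhood $U$ of $p$, with connection form $A$ determined by $\nabla e = e \cdot A$, and write $\eta = e \left( p \right) \cdot h$. Using the trivialization $\theta_e : U \times \mathbbm{R}^r \longrightarrow E_{\mid U}$ and its differential to identify $T_E$ with $T_M \oplus \mathbbm{R}^r$, the formula established in the proof of lemma \ref{addLm} reads $H_{\eta} \left( \xi \right) = d_{p, h} \theta_e \left[ \xi \oplus \left( - A \left( \xi \right) \cdot h \right) \right]$. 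Thus, extending $\xi_1, \xi_2$ to vector fields over $U$, the horizontal lifts $\Xi_j = H^{\nabla} \xi_j$ correspond in this trivialization to the vector fields $V_j \left( x, h \right) = \left( \xi_j \left( x \right), - A_x \left( \xi_j \right) \cdot h \right)$ on $U \times \mathbbm{R}^r$.

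Next I would compute the Lie bracket $\left[ V_1, V_2 \right]$ componentwise. Its base component is $\left[ \xi_1, \xi_2 \right]$, as expected from $d \pi_E \cdot H = \mathbbm{I}$, while a direct computation of the fibre component gives
\begin{eqnarray*}
  \left[ V_1, V_2 \right]^{\text{fib}} & = & \left[ - \xi_1 \left( A \left( \xi_2 \right) \right) + \xi_2 \left( A \left( \xi_1 \right) \right) + A \left( \xi_2 \right) A \left( \xi_1 \right) - A \left( \xi_1 \right) A \left( \xi_2 \right) \right] \cdot h \,.
\end{eqnarray*}
Recognizing the local curvature form $F \assign d A + A \wedge A$, which satisfies $R \left( \xi_1, \xi_2 \right) \left( e \cdot h \right) = e \cdot F \left( \xi_1, \xi_2 \right) \cdot h$ for $R = \nabla^2$, this fibre component equals $\left[ - F \left( \xi_1, \xi_2 \right) - A \left( \left[ \xi_1, \xi_2 \right] \right) \right] \cdot h$.

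Then I would apply the vertical projection $\gamma^{\nabla} = \mathbbm{I}_{T_E} - H^{\nabla} \cdot d \pi_E$. Since in the trivialization the horizontal lift of $\left[ \xi_1, \xi_2 \right]$ at $\left( p, h \right)$ is $\left( \left[ \xi_1, \xi_2 \right], - A \left( \left[ \xi_1, \xi_2 \right] \right) \cdot h \right)$, subtracting it from $\left[ V_1, V_2 \right]$ removes the base part together with the $A \left( \left[ \xi_1, \xi_2 \right] \right)$-term and leaves the purely vertical vector $\left( 0, - F \left( \xi_1, \xi_2 \right) \cdot h \right)$. By the identity (\ref{Ttrivial}) this vertical vector is exactly $T_{\eta} \bigl( e \left( p \right) \cdot \left( - F \left( \xi_1, \xi_2 \right) \cdot h \right) \bigr) = T_{\eta} \bigl( - R \left( \xi_1, \xi_2 \right) \eta \bigr) = T_{\eta} \bigl( R \left( \xi_2, \xi_1 \right) \eta \bigr)$, which is the claimed formula $\gamma^{\nabla}_{\eta} \left( \left[ \Xi_1, \Xi_2 \right] \left( \eta \right) \right) = T_{\eta} \left[ R \left( \xi_2, \xi_1 \right) \eta \right]$.

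The computation is entirely routine; the only point requiring care — and the place where the asymmetry $R \left( \xi_2, \xi_1 \right)$ rather than $R \left( \xi_1, \xi_2 \right)$ on the right-hand side originates — is the bookkeeping of signs in identifying $F = d A + A \wedge A$ with the operator $R = \nabla^2$ and in the direction of the vertical projection through $T_{\eta}$. I would note in passing that the output is manifestly tensorial in $\xi_1, \xi_2$, consistent with the tensorial right-hand side, since after removing the horizontal lift of $\left[ \xi_1, \xi_2 \right]$ the surviving term depends only on the pointwise values of $\xi_1, \xi_2$ through $F$; alternatively one may assume $\left[ \xi_1, \xi_2 \right] \left( p \right) = 0$ from the outset, as is done in the proof of lemma \ref{affineLm}.
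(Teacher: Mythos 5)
Your proof is correct, but it takes a genuinely different route from the paper's. You work in the local trivialization $\theta_e$, represent the horizontal lifts as $V_j \left( x, h \right) = \left( \xi_j \left( x \right), - A_x \left( \xi_j \right) \cdot h \right)$ via the formula from lemma \ref{addLm}, and compute the bracket componentwise; your fibre component $\left[ - \xi_1 \left( A \left( \xi_2 \right) \right) + \xi_2 \left( A \left( \xi_1 \right) \right) + A \left( \xi_2 \right) A \left( \xi_1 \right) - A \left( \xi_1 \right) A \left( \xi_2 \right) \right] \cdot h$ is right, it does equal $\left[ - F \left( \xi_1, \xi_2 \right) - A \left( \left[ \xi_1, \xi_2 \right] \right) \right] \cdot h$ with $F = d A + A \wedge A$ under the convention $\left( A \wedge A \right) \left( \xi_1, \xi_2 \right) = A \left( \xi_1 \right) A \left( \xi_2 \right) - A \left( \xi_2 \right) A \left( \xi_1 \right)$, and the identification $R \left( \xi_1, \xi_2 \right) \left( e \cdot h \right) = e \cdot F \left( \xi_1, \xi_2 \right) \cdot h$ together with (\ref{Ttrivial}) then delivers exactly $T_{\eta} \left[ R \left( \xi_2, \xi_1 \right) \eta \right]$ after the vertical projection removes the horizontal lift of $\left[ \xi_1, \xi_2 \right]$. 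The paper instead argues intrinsically: it picks an auxiliary section $\sigma$ with $\sigma \left( p \right) = \eta$, writes $\sigma_{\ast} \xi_j = \Xi_j + \left[ \Xi_j, \Sigma \right]$ using the flow-theoretic identity (\ref{BraketConn}), expands $\sigma_{\ast} \left[ \xi_1, \xi_2 \right] = \left[ \sigma_{\ast} \xi_1, \sigma_{\ast} \xi_2 \right]$ with the help of (\ref{vanishComm}), and converts the iterated brackets into second covariant derivatives via (\ref{Braket2Cov}), so that $\left[ \Xi_1, \Xi_2 \right] \left( \eta \right) = T_{\eta} \left[ R \left( \xi_2, \xi_1 \right) \eta \right] + H_{\eta} \left( \left[ \xi_1, \xi_2 \right] \right)$ falls out frame-free. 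What each approach buys: the paper's route reuses the parallel-transport bracket identities that are needed again elsewhere (for instance in the proof of lemma \ref{affineLm}) and never touches a frame, while yours is more elementary and self-contained modulo the local formula of lemma \ref{addLm}, and it makes the sign bookkeeping — the source of the asymmetry $R \left( \xi_2, \xi_1 \right)$ versus $R \left( \xi_1, \xi_2 \right)$ — completely explicit, which you correctly flag as the only delicate point.
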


\begin{proof}
  Let $\sigma$ be a local section of $E$ such that $\sigma \left( p \right) =
  \eta$. By definition of horizontal lift $\Xi$ of a vector field $\xi$ we
  have
  \begin{eqnarray*}
    \Xi \left( \eta \right) & = & [d \sigma \left( \xi \right)] \circ \pi_E
    \left( \eta \right) - T_{\eta} \left[ (\nabla_{\xi} \sigma) \circ \pi_E
    \left( \eta \right)_{_{_{_{}}}} \right] .
  \end{eqnarray*}
  We infer by (\ref{BraketConn}) the identity
  \begin{eqnarray*}
    {}[d \sigma \left( \xi \right)] \circ \pi_E & = & \Xi + [\Xi, \Sigma] \,.
  \end{eqnarray*}
  We infer $\sigma_{\ast} \xi = \Xi + [\Xi, \Sigma]$ over $\tmop{Im} \sigma$.
  Thus
  \begin{eqnarray*}
    \sigma_{\ast} [\xi_1, \xi_2] & = & [\sigma_{\ast} \xi_1, \sigma_{\ast}
    \xi_2]\\
    &  & \\
    & = & [\Xi_1, \Xi_2] + \left[ \Xi_1, [\Xi_2, \Sigma]_{_{_{_{}}}} \right]
    + \left[ [\Xi_1, \Sigma]_{_{_{_{}}}}, \Xi_2 \right],
  \end{eqnarray*}
  thanks to (\ref{vanishComm}). We rewrite the previous equality as
  \begin{eqnarray*}
    {}[\Xi_1, \Xi_2] & = & \left[ \Xi_2, [\Xi_1, \Sigma]_{_{_{_{}}}} \right] -
    \left[ \Xi_1, [\Xi_2, \Sigma]_{_{_{_{}}}} \right] - \sigma_{\ast} [\xi_2,
    \xi_1] \,.
  \end{eqnarray*}
  Using (\ref{Braket2Cov}) we deduce
  \begin{eqnarray*}
    {}[\Xi_1, \Xi_2] \left( \eta \right) & = & T_{\eta} \left[ (\nabla_{\xi_2}
    \nabla_{\xi_1} \sigma - \nabla_{\xi_1} \nabla_{\xi_2} \sigma) \left( p
    \right)_{_{_{_{}}}} \right] - d_p \sigma ([\xi_2, \xi_1]) \\
    &  & \\
    & = & T_{\eta} \left[ (\nabla_{\xi_2} \nabla_{\xi_1} \sigma -
    \nabla_{\xi_1} \nabla_{\xi_2} \sigma - \nabla_{[\xi_2, \xi_1]} \sigma)
    \left( p \right)_{_{_{_{}}}} \right] - H_{\eta} ([\xi_2, \xi_1])\\
    &  & \\
    & = & T_{\eta} [R \left( \xi_2, \xi_1 \right) \sigma \left( p \right)]
    \noplus + H_{\eta} ([\xi_1, \xi_2]) \,.
  \end{eqnarray*}
  We infer the required conclusion.
\end{proof}

\vspace{1cm}
\noindent
Nefton Pali
\\
Universit\'{e} Paris Sud, D\'epartement de Math\'ematiques 
\\
B\^{a}timent 307 F91405 Orsay, France
\\
E-mail: \textit{nefton.pali@math.u-psud.fr}

\end{document}